\newtheorem{theorem}{Theorem}[section]
\newtheorem{lemma}[theorem]{Lemma}
\newtheorem{conj}[theorem]{Conjecture}
\newtheorem{coro}[theorem]{Corollary}
\newtheorem{prop}[theorem]{Proposition}
\begin{document}

\begin{frontmatter}

\title{More on the full Brouwer's Laplacian \\ spectrum conjecture}

\author{Xiaodan Chen}%
\ead{x.d.chen@live.cn}

\author{Junwei Zi}
\ead{junwzmath@163.com}


\address{College of Mathematics and Information Science, Guangxi University,\\
Nanning 530004, Guangxi, P. R. China}

\begin{abstract}
Brouwer conjectured that the sum of the first $k$ largest Laplacian eigenvalues of an $n$-vertex graph
is less than or equal to the number of its edges plus $\binom{k+1}{2}$ for each $k\in \{1,2,\cdots,n\}$,
which has come to be known as Brouwer's conjecture.
Recently, Li and Guo further considered the case when the equalities hold in these conjectured inequalities,
and proposed the full version of Brouwer's conjecture.
In this paper, we first present a concise version of the full Brouwer's conjecture.
Then we show that the full Brouwer's conjecture holds for
two families of spanning subgraphs of complete split graphs
and for $c$-cyclic graphs with $c\in\{0,1,2\}$.
We also consider the Nordhaus-Gaddum version of the full Brouwer's conjecture and present partial solutions to it.

\end{abstract}

\begin{keyword} Brouwer's conjecture, sum of Laplacian eigenvalues, complete split graph, $c$-cyclic graph, Nordhaus-Gaddum
\MSC 05C50
\end{keyword}

\end{frontmatter}

\section{Introduction}

It is well-known that the Laplacian spectrum of a graph encodes abundant information about combinatorial properties of the graph.
One of the famous examples is Kirchhoff's matrix-tree theorem \cite{Kirchhoff},
which tells us that the number of spanning trees in a connected graph is equal to the product of all non-zero Laplacian eigenvalues of the graph divided by its order.
Another example is Grone-Merris conjecture posed in 1994 \cite{Grone1},
which states that the (non-increasing) Laplacian eigenvalue sequence of a graph is majorized by the conjugate degree sequence of the graph
(see Section 2 below for details).
This conjecture has been solved by Bai \cite{Bai}, and now is known as Grone-Merris-Bai theorem.

\begin{theorem}[Grone-Merris-Bai theorem]\label{thm-Bai} %
 	For any graph $G$ on $n$ vertices with (non-increasing) Laplacian eigenvalue sequence $\mu(G):=(\mu_1,\mu_2,\cdots,\mu_n)$
    and conjugated degree sequence $d^*(G):=(d_1^*,d_2^*,\cdots,d_n^*)$, $\mu(G)$ is majorized by $d^*(G)$, namely,
 	$$\mu(G) \preceq d^*(G).$$
    Moreover, the equality holds if and only if $G$ is a threshold graph.
\end{theorem}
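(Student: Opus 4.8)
\medskip
\noindent\textbf{Proof strategy.}\quad Since $\sum_{i=1}^{n}\mu_i=\mathrm{tr}\,L(G)=\sum_{v}d_v=\sum_{i=1}^{n}d_i^{*}$, the majorization $\mu(G)\preceq d^{*}(G)$ is equivalent to the scalar inequalities
\[
S_k(G):=\sum_{i=1}^{k}\mu_i\ \le\ \sum_{i=1}^{k}d_i^{*}=:T_k(G)\qquad(k=1,2,\dots,n),
\]
and ``equality holds'' in the theorem means $S_k(G)=T_k(G)$ for every $k$, that is, $\mu(G)=d^{*}(G)$ as sequences. So I would split the work into establishing these inequalities and then deciding when they are all simultaneously tight.

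I would first dispose of threshold graphs, which already gives the ``if'' direction of the equality statement. Every threshold graph is obtained from $K_1$ by a sequence of steps, each of which adds either an isolated vertex or a vertex adjacent to all existing vertices (a universal vertex). Adding an isolated vertex appends a $0$ to the Laplacian spectrum and a $0$ to the degree sequence, and hence changes neither $S_k$ nor $T_k$ in the ranges of $k$ that matter (and $S_n=T_n=2e(G)$ is automatic). Passing from an $m$-vertex graph $H$ to $\widehat H:=H\vee K_1$ turns the spectrum $(\mu_1,\dots,\mu_{m-1},0)$ of $H$ into $(\,m+1,\ \mu_1+1,\dots,\mu_{m-1}+1,\ 0\,)$, by the join formula, and turns the degree sequence into $(\,m,\ d_1+1,\dots,d_m+1\,)$; a short computation then yields
\[
S_k(\widehat H)=m+k+S_{k-1}(H),\qquad T_k(\widehat H)=m+k+T_{k-1}(H)\qquad(1\le k\le m),
\]
with $S_0=T_0=0$. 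Thus each step preserves the ``defect'' $T_k-S_k$ at the appropriate index, and as this defect is $0$ for $K_1$ it remains $0$ throughout; hence $\mu(G)=d^{*}(G)$ for every threshold graph $G$.

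The core of the theorem is the inequality $S_k(G)\le T_k(G)$ for an arbitrary graph $G$. Here I would argue by a minimal-counterexample reduction: among all $(G,k)$ violating the inequality, pick one with $|V(G)|$, and then $e(G)$, as small as possible. The computations above force $G$ to have no isolated vertex and no universal vertex (deleting either would produce a strictly smaller counterexample), while the end-cases $k=1$ — where $S_1=\mu_1\le n=d_1^{*}$ once $G$ has no isolated vertex — and $k=n$ — where $S_n=T_n=2e(G)$ — cannot arise. Since $G$ is then not a threshold graph, it contains an induced $2K_2$, $P_4$ or $C_4$, and I would perform a ``shifting'' move on a suitable non-adjacent pair $u,v$ (transfer the edges at $u$ to the neighbours of $v$, so that the closed neighbourhoods of $u$ and $v$ become nested), iterating until a threshold graph is reached. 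It then suffices to show that no such move can decrease $T_k-S_k$. This means controlling the move's simultaneous effect on the conjugate degree sequence and on the sum of the $k$ largest Laplacian eigenvalues; for the latter I would invoke Ky Fan's variational principle $S_k(G)=\max_{P}\mathrm{tr}\bigl(P\,L(G)\bigr)$ over rank-$k$ orthogonal projections $P$, use the identity $\mathrm{tr}\bigl(P\,L(G)\bigr)=\sum_{uv\in E(G)}\|P(e_u-e_v)\|^{2}$, and bound the edge-sum after the shift in terms of the weights $p_v:=\|Pe_v\|^{2}\in[0,1]$ (with $\sum_v p_v=k$) through a transportation/assignment argument. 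Establishing this estimate on sums of Laplacian eigenvalues under the shifting operation is, I expect, the principal obstacle; it is the linear-algebraic heart of Bai's solution, and the reason the conjecture resisted proof for so long.

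Finally, for the ``only if'' direction I would make the reduction strict: a genuine shifting move on an induced $2K_2$, $P_4$ or $C_4$ increases $T_k-S_k$ strictly for at least one $k$. A non-threshold graph requires at least one such move before becoming threshold, so it satisfies $S_k(G)<T_k(G)$ for some $k$; equivalently, $\mu(G)=d^{*}(G)$ forces $G$ to be a threshold graph.
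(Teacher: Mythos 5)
You should first note that the paper does not prove this statement at all: Theorem~\ref{thm-Bai} is quoted as a known result, with the inequality attributed to Bai and the equality characterization to Merris (and to Duval--Reiner). So there is no in-paper argument to compare against, and your proposal has to stand on its own as a proof of Bai's theorem.

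The part of your argument that is actually carried out -- the ``if'' direction of the equality statement -- is correct: the bookkeeping for adding an isolated vertex, and the identities $S_k(H\vee K_1)=m+k+S_{k-1}(H)$ and $T_k(H\vee K_1)=m+k+T_{k-1}(H)$, do show that $\mu(G)=d^{*}(G)$ for every threshold graph. (This is essentially the same induction the paper uses later, in Lemmas~\ref{thm-4-1} and~\ref{thm-4-2}, for the full Brouwer conjecture.) But the core of the theorem, the inequality $S_k(G)\le T_k(G)$ for arbitrary $G$, is not proved: everything is reduced to the claim that a shifting move on a non-adjacent pair never decreases $T_k-S_k$, and you yourself flag this as ``the principal obstacle.'' That claim is precisely where the difficulty of the Grone--Merris conjecture lived for fifteen years, and it is not a routine estimate one can wave at with Ky Fan's variational principle plus a ``transportation argument'': the effect of shifting on $S_k$ is not monotone in any obvious sense, which is why Bai's actual proof does not proceed by shifting at all. (Bai argues by induction on $n$, splitting $G$ along the Durfee square of its degree sequence into the subgraph induced on the high-degree vertices and the rest, applying Fan's inequality to that decomposition, and handling the critical index $k=T$ by a separate, quite delicate analysis.) The ``only if'' direction suffers from the same problem in sharpened form: you assert without proof that a genuine shift strictly increases $T_k-S_k$ for some $k$. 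So the proposal is a plausible-sounding programme, not a proof; the one lemma on which everything rests is exactly the open heart of the theorem, and the route chosen is not the one that is known to work.
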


We should mention that the above characterization for the case of equality is due to Merris \cite{Merris2},
which also follows from a more general result of Duval and Reiner; see Proposition 6.4 in \cite{Duval}.

On the other hand, as a variant of Grone-Merris conjecture, 
Brouwer \cite{Brouwer1} proposed the following conjecture,
which has come to be known as Brouwer's conjecture.

\begin{conj} [Brouwer's conjecture] \label{conj-1}
For any graph $G$ on $n$ vertices with $m$ edges and for each $k\in\{1,2,\cdots,n\}$,
\begin{eqnarray*}
s_{k}(G):=\sum_{i=1}^{k}\mu_i\leq  m+\binom{k+1}{2}. 
\end{eqnarray*}
\end{conj}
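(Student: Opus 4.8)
\medskip
\noindent\textbf{Proof strategy.}
Conjecture~\ref{conj-1} is a well-known open problem, so the plan is not a complete proof but a description of the ingredients I would combine and of the structural hypotheses under which one of them becomes decisive. The starting point is the Ky Fan variational identity $s_k(G)=\max\{\operatorname{tr}(X^{\top}LX):X\in\mathbb{R}^{n\times k},\ X^{\top}X=I_k\}$, together with the edge decomposition $L=\sum_{uv\in E(G)}L_{uv}$ in which each $L_{uv}$ is positive semidefinite of rank one and trace $2$. For a fixed feasible $X$ this gives $\operatorname{tr}(X^{\top}LX)=\sum_{uv\in E(G)}\operatorname{tr}(X^{\top}L_{uv}X)$ with every summand in $[0,2]$, and the heuristic to chase is that a $k$-dimensional subspace can fully absorb a copy of $K_{k+1}$, contributing $2\binom{k+1}{2}=s_k(K_{k+1})=m(K_{k+1})+\binom{k+1}{2}$ (so $K_{k+1}$ is an equality instance), while the remaining edges should contribute at most $1$ apiece on average.

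The first tool is the Grone--Merris--Bai theorem (Theorem~\ref{thm-Bai}): since $\mu(G)\preceq d^{*}(G)$, one has $s_k(G)\le\sum_{i=1}^{k}d_i^{*}=\sum_{v\in V(G)}\min\{d_v,k\}$. I would first isolate the class of graphs for which $\sum_{v}\min\{d_v,k\}\le m+\binom{k+1}{2}$ holds for every $k$; the split-graph families should fall here, because a complete split graph is a threshold graph, Theorem~\ref{thm-Bai} then holds with equality, so Brouwer's inequality reduces to a finite combinatorial check on a degree sequence --- and passing to a spanning subgraph only lowers degrees. The warning, already visible for a disjoint union of $t \ge 2$ triangles (there $\sum_v\min\{d_v,2\}=6t$ while $m+\binom{3}{2}=3t+3$, even though $s_2=6$), is that for ``flat, dense, non-threshold'' graphs this bound overshoots badly, so a second idea is needed.

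The second tool is reduction by one edge: from $L(G)=L(G-e)+L_e$ with $L_e\succeq0$ of rank one and trace $2$, Weyl's inequalities yield $\mu_i(G-e)\le\mu_i(G)$ and $\sum_{i=1}^{n}(\mu_i(G)-\mu_i(G-e))=2$, hence $s_k(G)\le s_k(G-e)+2$. Since the conjecture is known for forests, iterating down to a spanning tree gives $s_k(G)\le s_k(T)+2c$ for a connected $c$-cyclic graph; I expect that, with a careful choice of which edges to strip and without conceding the full $2$ at each step, this disposes of the cases $c\in\{0,1,2\}$ once the (very restricted) structure of such graphs is written down, but in its crudest form it loses an additive constant --- already in the unicyclic case --- and is useless for dense graphs. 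A third, auxiliary tool is vertex deletion together with interlacing, which I would apply to the split-graph families to control $\mu_k$ directly, the co-clique part forcing a long tail of small eigenvalues.

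The hard part --- and the reason the conjecture is still open --- is the middle range $2<k<n-1$ for graphs that are at once dense and far from threshold: there $\mu_1\le n$ is far too weak, the majorization bound $\sum_v\min\{d_v,k\}$ overshoots, and edge stripping has accumulated error linear in $m-n$. Removing this obstacle seems to require a genuinely new lower bound on $s_k$, equivalently an upper bound on $\mu_{k+1}+\cdots+\mu_n$, which I do not see how to obtain unconditionally; the realistic plan is therefore to retreat to the families named in the abstract, where the first or third tool is sharp, and to handle general $c$-cyclic graphs with $c$ small via the second.
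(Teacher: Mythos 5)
The statement you were asked to prove is Brouwer's conjecture itself, which the paper explicitly records as an open problem (``Brouwer's conjecture is still open so far''); the paper contains no proof of it, only proofs of special cases (split graphs, the joins $G\vee qK_1$ with $p\le q$, and $c$-cyclic graphs with $c\in\{0,1,2\}$) and of a Nordhaus--Gaddum variant. Your proposal honestly acknowledges that it is a strategy sketch rather than a proof, so the unavoidable verdict is that there is a gap: no argument is given that covers the ``dense and far from threshold'' middle range $2<k<n-1$, and you correctly identify that none of your three tools closes it. Your individual claims check out: $s_k(K_{k+1})=k(k+1)=m(K_{k+1})+\binom{k+1}{2}$ is indeed an equality instance; $\sum_{i=1}^{k}d_i^{*}=\sum_{v}\min\{d_v,k\}$ is the right reformulation of the Grone--Merris--Bai bound; the disjoint-triangles example does show that bound overshooting by a linear amount; and $s_k(G)\le s_k(G-e)+2$ follows from Fan's inequality applied to $L(G)=L(G-e)+L_e$.

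It is worth noting how closely your toolbox matches what the paper actually deploys for its partial results. Your first tool is precisely the paper's route to Theorem 3.2 (split graphs): there the authors combine Grone--Merris--Bai with the fact, quoted from Mayank's thesis, that $\sum_{i=1}^{k}d_i^{*}\le e(G)+\binom{k+1}{2}$ for split graphs, then settle the equality case by a Ferrers--Sylvester diagram count around the trace $T=\omega(G)-1$ --- a refinement you would need to add to get the full (equality-characterizing) version. Your second tool, edge-stripping, appears in the paper in a sharpened form as Lemma 3.4: rather than conceding $+2$ per edge naively, the authors cut along a $t$-edge-cut, use $s_{n-1}(G)=s_{n-1}(G_1\cup G_2)+2t$ together with interlacing and $\mu_{n-1}(G)>0$ to get $s_k(G)<s_k(G_1\cup G_2)+2t$, and then exploit the subadditivity of $\binom{\cdot+1}{2}$ across the two components; this is exactly the ``careful choice of which edges to strip'' you anticipate would be needed for $c\in\{0,1,2\}$, supplemented in the paper by the bound $s_k(G)\le 2e(G)-n+2k-\frac{2k-2}{n}$ (Lemma 2.5) which handles all $k\ge 3$ (resp.\ $k\ge 4$) outright for $c\le 1$ (resp.\ $c=2$). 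So your proposal is a sound map of the known territory, but, like the paper, it does not and cannot yet prove the conjectured statement in full generality.
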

By virtue of computers, Brouwer's conjecture was confirmed for graphs with at most 11 vertices \cite{Brouwer1,Cooper}.
It was also proved that Brouwer's conjecture is true for trees \cite{Haemers1}, unicyclic graphs \cite{Du,Wang}, bicyclic graphs \cite{Du},
threshold graphs \cite{Brouwer1}, split graphs \cite{Mayank}, cographs \cite{Mayank}, and regular graphs \cite{Mayank},
and is true for $k\in\{1,2,n-3,n-2,n-1,n\}$ \cite{Haemers1,Chen3}.
Helmberg and Trevisan \cite{Helmberg} showed that
a graph satisfies Brouwer's conjecture if and only if it is spectrally threshold dominated,
which presents a combinatorial condition equivalent to Brouwer's conjecture.
Rocha \cite{Rocha2} proved that Brouwer's conjecture holds for a sequence of random graphs with probability tending to one as the number of vertices goes to infinity.
For other progress on Brouwer's conjecture we refer to \cite{Blinovsky,Chen1,Chen2,Cooper,Ferreira,Ganie1,Ganie2,Ganie3,Ganie4,Ganie5,Rocha1,Torres}.
It should be noted, however, that Brouwer's conjecture is still open so far.

Recently, Li and Guo \cite{Li} further considered the case where the equalities hold in the inequalities of Brouwer's conjecture,
and proposed the full version of Brouwer's conjecture.

\begin{conj} [The full Brouwer's conjecture] \label{conj-2}
	For any graph $G$ on $n$ vertices with $m$ edges and for each $k\in\{1,2,\cdots,n-1\}$,
	\begin{eqnarray*}
		s_{k}(G)\leqslant m+\binom{k+1}{2},
	\end{eqnarray*}
    with equality if and only if $G \cong G_{k,r,s}$ ($r \geqslant 1$, $s \geqslant 0$),
    where $G_{k,r,s}$ is the graph of order $n=k+r+s$ consisting of a clique $K_{k}$ and two independent sets $\overline{K_{r}}$ and $\overline{K_{s}}$,
    such that each vertex in $K_{k}$ is adjacent to all vertices in $\overline{K_{r}}$,
    and for each vertex $v_{i}$ in $\overline{K_{s}}$, $N(v_{i}) \subsetneq  V(K_{k})$ ($i=1,2,\cdots,s$), and $N(v_{i+1}) \subseteq N(v_{i})$ ($i=1,2,\cdots,s-1)$.
\end{conj}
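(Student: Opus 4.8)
The plan is to separate Conjecture~\ref{conj-2} into its two assertions---the bound $s_k(G)\le m+\binom{k+1}{2}$ and the equality characterization---and to anchor both on the observation that every candidate extremal graph $G_{k,r,s}$ is a \emph{threshold graph}. Its independent vertices, namely those of $\overline{K_r}$ (each with neighborhood exactly $V(K_k)$) together with those of $\overline{K_s}$ (with $V(K_k)\supsetneq N(v_1)\supseteq N(v_2)\supseteq\cdots$), have totally nested neighborhoods, and a short check shows the clique vertices are pairwise comparable as well; hence $G_{k,r,s}$ is $\{2K_2,P_4,C_4\}$-free. This lets me invoke the Grone--Merris--Bai theorem (Theorem~\ref{thm-Bai}) precisely on the graphs where equality is expected to occur.

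For sufficiency I would verify that each $G_{k,r,s}$ attains equality by a direct degree count that avoids the spectrum entirely. Since $G_{k,r,s}$ is threshold, Theorem~\ref{thm-Bai} gives $\mu=d^*$, hence $s_k=\sum_{i=1}^k d_i^*$, and the elementary identity $\sum_{i=1}^k d_i^*=\sum_v\min\{d_v,k\}$ turns the claim into bookkeeping. The $k$ clique vertices and the $r$ vertices of $\overline{K_r}$ each contribute $k$ (their degrees are $\ge k$), while each $v_i\in\overline{K_s}$ contributes $|N(v_i)|\le k-1$, exactly the number of edges it adds; this gives $\sum_v\min\{d_v,k\}=k^2+kr+\sum_i|N(v_i)|=m+\binom{k+1}{2}$, using $\binom{k}{2}+\binom{k+1}{2}=k^2$ and $m=\binom{k}{2}+kr+\sum_i|N(v_i)|$, for every admissible $r\ge 1$, $s\ge 0$.

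For the inequality on an arbitrary $G$ I would aim to establish spectral threshold domination in the sense of Helmberg and Trevisan \cite{Helmberg}, who proved this property equivalent to Brouwer's bound: concretely, to dominate the partial sums of $\mu(G)$ by those of a threshold graph built from the degree data of $G$, on which $\mu=d^*$ reduces the bound to the combinatorial inequality $\sum_v\min\{d_v,k\}\le m+\binom{k+1}{2}$ for threshold sequences. For the converse I would show that equality forces $G$ to be threshold, tracing equality back through the Ky Fan identity $s_k(G)=\max_{\dim U=k}\operatorname{tr}(P_U L(G) P_U)$ and through the domination step, which should force a join decomposition $K_k\vee(\text{independent remainder})$ with strictly nested adjacencies into the clique. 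Once $G$ is known to be threshold, Brouwer-tightness reduces (via $\mu=d^*$) to the single degree equation $\sum_v\min\{d_v,k\}=m+\binom{k+1}{2}$; since threshold graphs are determined up to isomorphism by their degree sequence, solving this equation pins the graph down to some $G_{k,r,s}$.

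The main obstacle is unquestionably the inequality for general graphs. The naive reductions break down: deleting an edge lowers $m$ by $1$ but can lower $s_k$ by as much as $2$ (since $L(G)=L(G-e)+L_e$ with $L_e$ of trace $2$), so induction on edges loses a unit of slack at each step; and the Grone--Merris--Bai bound overshoots, because $\sum_v\min\{d_v,k\}$ can exceed $m+\binom{k+1}{2}$ already for $2K_2$ at $k=1$ (value $4$ against $3$). The decisive work is therefore to control the gap between $\mu(G)$ and $d^*(G)$ sharply enough that the threshold-domination reduction closes, and to show that any graph meeting the bound with equality is itself threshold; the remaining degree-sequence analysis is then comparatively routine.
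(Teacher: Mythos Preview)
The statement you are attempting to prove is an \emph{open conjecture}; the paper does not prove it and does not claim to. What the paper does with Conjecture~\ref{conj-2} is only to reformulate it: the paragraph following the statement argues that the family $\{G_{k,r,s}:r\ge1,\ s\ge0\}$ coincides exactly with the family of threshold graphs on $n$ vertices with clique number $k+1$, yielding the concise restatement Conjecture~\ref{conj-3}. Your observation that each $G_{k,r,s}$ is threshold is the easy half of this identification; the paper adds the converse via the $\{0,1\}$-sequence description of threshold graphs. Separately, the paper does establish the sufficiency direction (that every $G_{k,r,s}$ attains equality) inside the proof of Theorem~\ref{thm-split}, by a Ferrers--Sylvester diagram count that is equivalent to your $\sum_v\min\{d_v,k\}$ computation.

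Where your proposal has a genuine gap is exactly where the conjecture is open. Your plan for the inequality on an arbitrary $G$ is to ``establish spectral threshold domination in the sense of Helmberg and Trevisan,'' but their result is an \emph{equivalence}: a graph satisfies Brouwer's bound for all $k$ if and only if it is spectrally threshold dominated. Invoking this gives you nothing beyond a restatement of the conjecture in different language; you would still need to prove that every graph is spectrally threshold dominated, which is precisely the open problem. Likewise, your plan for the ``only if'' direction---tracing equality through the Ky Fan identity to force a threshold structure---is a hope rather than an argument: no mechanism is given for why equality in $s_k=\max_{\dim U=k}\operatorname{tr}(P_ULP_U)$ should force nested neighborhoods. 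You correctly diagnose in your final paragraph that the edge-deletion induction and the Grone--Merris--Bai bound both fail to close the gap; that diagnosis is accurate, but it leaves the main assertion unproved.
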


Observe that $G_{k,r,s}$ is a threshold graph having $n=k+r+s$ vertices and clique number $k+1$, as shown in the proof of Theorem 2.2 in \cite{Li}.
Here we remark that any threshold graph having $n$ vertices and clique number $k+1$ must be the graph $G_{k,r,s}$ with appropriate $r$ and $s$.
To see this, let us recall that a graph is threshold if and only if it can be constructed through an iterative process which starts with an isolated vertex,
and where, at each step, either a new isolated vertex is added, or a new dominating vertex (i.e., a vertex adjacent to all previous vertices) is added \cite{Mahadev}.
This will yield that every threshold graph corresponds one-to-one to a $\{0,1\}$-sequence,
where $0$ and $1$ record the operations of adding an isolated vertex and a dominating vertex, respectively.
In particular, the corresponding sequence of a threshold graph having $n$ vertices and clique number $k+1$ is a sequence of length $n$ with exactly $k$ $1$'s,
which has the following general form (where $1<i_1<i_2<\cdots<i_k\leq n$):
$$\frac{0}{\mathop{1}\limits^\uparrow}\cdots \frac{1}{\mathop{i_1}\limits^\uparrow}\cdots\frac{1}{\mathop{i_2}\limits^\uparrow}\cdots\cdots \frac{1}{\mathop{i_k}\limits^\uparrow}\cdots$$
In this case, one can easily see that the $k$ dominating vertices form the clique $K_{k}$,
and the first $(i_1-1)$ isolated vertices form the independent set $\overline{K_{r}}$ (let $r=i_1-1$),
and the remaining isolated vertices form the independent set $\overline{K_{s}}$
(let $s=n-k-i_1+1$ and let $v_1,v_2,\cdots,v_s$ be the corresponding isolated vertices from left to right along the sequence);
this is exactly the graph $G_{k,r,s}$. 

Now, we can give a concise version of Conjecture \ref{conj-2}.

\begin{conj} [A concise version of the full Brouwer's conjecture] \label{conj-3}
	For any graph $G$ on $n$ vertices with $m$ edges and for each $k\in\{1,2,\cdots,n-1\}$,
	\begin{eqnarray*}
		s_{k}(G)\leqslant m+\binom{k+1}{2},
	\end{eqnarray*}
	with equality if and only if $G$ is a threshold graph having $n$ vertices and clique number $k+1$.
\end{conj}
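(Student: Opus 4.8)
The plan is to show that Conjecture~\ref{conj-3} is nothing but a reformulation of Conjecture~\ref{conj-2}: the claimed inequality $s_{k}(G)\leqslant m+\binom{k+1}{2}$ is verbatim the same in both, so the whole matter reduces to proving that, for a fixed order $n$ and a fixed $k\in\{1,2,\cdots,n-1\}$, the two descriptions of the extremal graphs coincide, i.e.
$$\big\{\,G_{k,r,s}\ :\ r\geqslant 1,\ s\geqslant 0,\ k+r+s=n\,\big\}\;=\;\big\{\,H\ :\ H\text{ is a threshold graph on }n\text{ vertices with clique number }k+1\,\big\}.$$
Once this set equality is in hand, the equivalence of the two conjectures is immediate.

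For the inclusion ``$\subseteq$'', I would take $G_{k,r,s}$ with $r\geqslant 1$ and exhibit an explicit threshold construction sequence, from which both threshold-ness and the value of the clique number can be read off directly. Writing $V(K_{k})=\{u_{1},\ldots,u_{k}\}$, the chain of proper subsets $N(v_{1})\supseteq N(v_{2})\supseteq\cdots\supseteq N(v_{s})$ of $V(K_{k})$ can be arranged (after relabelling the $u_{j}$ suitably, say by increasing number of neighbours in $\overline{K_{s}}$) as suffixes $N(v_{i})=\{u_{t_{i}+1},\ldots,u_{k}\}$ with $1\leqslant t_{1}\leqslant\cdots\leqslant t_{s}\leqslant k$. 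Then the sequence ``first add the $r$ vertices of $\overline{K_{r}}$ as isolated vertices; next add $u_{1},u_{2},\ldots,u_{k}$ in this order as dominating vertices, inserting each $v_{i}$ as an isolated vertex immediately after $u_{t_{i}}$'' produces precisely $G_{k,r,s}$: every $u_{j}$, being dominating, gets joined to all of $\overline{K_{r}}$ and to exactly those $v_{i}$ with $t_{i}<j$, while no $v_{i}$ is ever joined to $u_{1},\ldots,u_{t_{i}}$ nor to any vertex of $\overline{K_{r}}$. Because $r\geqslant 1$, the process genuinely starts from an isolated vertex (one of $\overline{K_{r}}$), it uses exactly $k$ ``dominating'' steps, and hence $G_{k,r,s}$ is threshold with clique number $k+1$ (the set $\{u_{1},\ldots,u_{k}\}$ together with any one vertex of $\overline{K_{r}}$ is a maximum clique). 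This also makes transparent why $r\geqslant 1$ is the right hypothesis: dropping it, e.g.\ $G_{k,0,0}=K_{k}$ has clique number $k$, not $k+1$.

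For the reverse inclusion ``$\supseteq$'', I would use the $\{0,1\}$-sequence encoding of threshold graphs recalled just above the statement. A threshold graph $H$ on $n$ vertices with clique number $k+1$ corresponds to a sequence of length $n$ that begins with $0$ and contains exactly $k$ ones; let $i_{1}>1$ be the position of the first one and set $r=i_{1}-1\ (\geqslant 1)$ and $s=n-k-i_{1}+1\ (\geqslant 0)$. The $k$ dominating vertices form $K_{k}$; the first $r$ isolated vertices form $\overline{K_{r}}$, and each of them is adjacent to all of $K_{k}$ since it precedes every one; the remaining $s$ isolated vertices $v_{1},\ldots,v_{s}$, read left to right, have $N(v_{i})\subsetneq V(K_{k})$ (proper because position $i_{1}$ carries a one lying before $v_{i}$) and satisfy $N(v_{i+1})\subseteq N(v_{i})$ because the set of dominating vertices occurring to the right of a vertex only shrinks as the vertex moves right. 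Hence $H\cong G_{k,r,s}$, completing the argument.

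I do not foresee any genuine obstacle here; the statement is essentially a bookkeeping translation between the parametric family $G_{k,r,s}$ and the threshold construction process, and the text preceding Conjecture~\ref{conj-3} already supplies the ``$\supseteq$'' half. The only places calling for a little attention are the degenerate cases --- $s=0$, a vertex $v_{i}$ with $N(v_{i})=\emptyset$ (the case $t_{i}=k$, an isolated vertex of $G$), and several $v_{i}$ sharing a common value $t_{i}$ --- together with the observation that $r\geqslant 1$ is exactly the condition that pushes the clique number from $k$ up to $k+1$; none of these presents a real difficulty.
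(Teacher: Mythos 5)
Your proposal is correct and follows essentially the same route as the paper: since the statement is itself a conjecture, the only thing to establish is its equivalence with Conjecture~\ref{conj-2}, and both you and the paper do this by identifying the family $\{G_{k,r,s}: r\geq 1,\ s\geq 0,\ k+r+s=n\}$ with the threshold graphs on $n$ vertices of clique number $k+1$ via the iterative construction (equivalently, the $\{0,1\}$-sequence encoding) of threshold graphs. The only difference is cosmetic: for the inclusion ``$\subseteq$'' the paper defers to the proof of Theorem~2.2 in \cite{Li}, whereas you spell out the explicit construction sequence.
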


By computers, Li and Guo \cite{Li} confirmed the full Brouwer's conjecture for graphs with at most 9 vertices.
Moreover, they proved that the conjecture is true for $k\in\{1,2,n-3,n-2$,$n-1$\},
where the case of $k=2$ gives a solution to a conjecture of Guan et al. \cite{Guan};
see \cite{Zheng1,Zheng2} for more related results on this aspect.
In this paper, we proceed to explore the full Brouwer's conjecture.

Nordhaus-Gaddum-type results for various graph parameters have been extensively studied; see \cite{Aouchiche} for a comprehensive survey.
Here we would like to investigate Nordhaus-Gaddum-type results for $s_{k}(G)$.
Inspired by the (full) Brouwer's conjecture, we pose the following conjecture.

\begin{conj} \label{conj-4}
	Let $G$ be a graph with $n$ vertices and $\overline{G}$ be its complement. Then for each $k\in\{1,2,\cdots,n-1\}$,
	\begin{eqnarray*}
		s_k(G)+s_k(\overline{G})\leq \binom{n}{2}+2\cdot\binom{k+1}{2},
	\end{eqnarray*}
    with equality if and only if $G$ is a threshold graph having $n=2k+1$ vertices and clique number $k+1$.
\end{conj}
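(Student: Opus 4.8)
The plan is to reduce Conjecture~\ref{conj-4} to Brouwer's conjecture (in its full form, Conjecture~\ref{conj-3}) applied separately to $G$ and to $\overline{G}$. Two elementary facts drive everything. First, $m(G)+m(\overline{G})=\binom{n}{2}$, so adding the two Brouwer inequalities
\[
s_k(G)\le m(G)+\binom{k+1}{2},\qquad s_k(\overline{G})\le m(\overline{G})+\binom{k+1}{2}
\]
immediately gives $s_k(G)+s_k(\overline{G})\le\binom{n}{2}+2\binom{k+1}{2}$. (Equivalently, using $L(G)+L(\overline{G})=nI-J$ and hence $\mu_i(\overline{G})=n-\mu_{n-i}(G)$ for $1\le i\le n-1$, one has $s_k(\overline{G})=kn-2m(G)+s_{n-1-k}(G)$, so the claim turns into the single statement $s_k(G)+s_{n-1-k}(G)\le 2m(G)+\binom{k+1}{2}+\binom{n-k}{2}$ for $G$ alone, again a sum of two Brouwer instances.) Thus the inequality part of Conjecture~\ref{conj-4} is a formal consequence of Brouwer's conjecture, and in particular it holds unconditionally for $k\in\{1,2,n-3,n-2,n-1\}$ by the known cases recalled above.

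For the equality case I would argue as follows. Since the bound was obtained by adding Brouwer's inequality at index $k$ for $G$ and for $\overline{G}$, equality forces equality in both, so by the concise full Brouwer's conjecture (Conjecture~\ref{conj-3}) the graph $G$ is threshold on $n$ vertices with clique number $k+1$ \emph{and} $\overline{G}$ is threshold on $n$ vertices with clique number $k+1$. Now use that the complement of a threshold graph is again threshold (so one of the two demands is automatic) and that for $G_{k,r,s}$ one has $\omega(G_{k,r,s})=k+1$ while $\alpha(G_{k,r,s})=r+s=n-k$ (here $r\ge1$ matters); hence $\omega(\overline{G})=\alpha(G)=n-k$, which equals $k+1$ precisely when $n=2k+1$. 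Conversely, if $G$ is threshold on $2k+1$ vertices with clique number $k+1$, then $\overline{G}$ is threshold with clique number $\alpha(G)=k+1$, and both $G$ and $\overline{G}$ attain equality in Brouwer's inequality at index $k$ by Conjecture~\ref{conj-3}, so equality holds in Conjecture~\ref{conj-4}. Hence Conjecture~\ref{conj-4} holds in full --- inequality and equality characterization --- at every index $k$ for which the full Brouwer's conjecture is known for all $n$-vertex graphs, in particular for $k\in\{1,2,n-3,n-2,n-1\}$, and more generally whenever $G$ and $\overline{G}$ both belong to a class closed under complementation (e.g.\ threshold graphs) on which Conjecture~\ref{conj-3} has been verified.

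The main obstacle is that this reduction is essentially tight: Conjecture~\ref{conj-4} at index $k$ is no easier than Brouwer's conjecture at the indices $k$ and $n-1-k$, so a proof valid for all $k$ is out of reach; and the equality statement additionally requires the ``only if'' direction of the full Brouwer's conjecture, which at present is available only for the same short list of indices. The realistic target is therefore exactly the partial solution sketched above, where the only work beyond quoting known results is the complement bookkeeping --- confirming $\omega(\overline{G_{k,r,s}})=n-k$ and that equality forces $n=2k+1$ --- together with a sanity check of the boundary indices to be sure the equality cases line up.
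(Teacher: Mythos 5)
Your reduction ``full Brouwer $\Rightarrow$ Conjecture~\ref{conj-4}'' is correct as far as it goes: adding the two Brouwer inequalities for $G$ and $\overline{G}$ gives the bound, your identity $s_k(\overline{G})=kn-2e(G)+s_{n-1-k}(G)$ (equivalently the paper's Lemma~\ref{thm-3}) shows the statement is a sum of two Brouwer instances at indices $k$ and $n-1-k$, and your equality bookkeeping via $\omega(\overline{G})=\alpha(G)=n-k$ correctly forces $n=2k+1$. But this is not a proof of the statement: Conjecture~\ref{conj-4} is posed as a conjecture precisely because the full Brouwer's conjecture is open, and your argument is conditional on it everywhere except at the boundary indices $k\in\{1,2,n-3,n-2,n-1\}$. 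The unconditional consequence you extract is exactly the remark already made in the paper's introduction immediately after the conjecture is stated, so the proposal adds nothing beyond that observation (apart from making the complement bookkeeping explicit, which the paper leaves implicit).

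The paper's actual partial solutions take a genuinely different and unconditional route, which your proposal does not touch. Lemma~\ref{thm-3} first establishes the self-duality $k\leftrightarrow n-1-k$ from $\mu_i(\overline{G})=n-\mu_{n-i}(G)$, so it suffices to treat $k\le (n-1)/2$. Then Theorem~\ref{thm-4} gets the strict inequality for $k$ outside a middle window from nothing more than $s_k(G)<2e(G)$ for a connected graph; Theorem~\ref{thm-5} handles sparse and dense graphs via Zhou's bound (Lemma~\ref{lm-6}) plus a convexity estimate on $e(G)^2+e(\overline{G})^2$; and Theorem~\ref{thm-6} handles nearly regular graphs via Fan's inequality applied to $L=D-A$ together with Nikiforov's Nordhaus--Gaddum bound on negative adjacency eigenvalues (Lemma~\ref{lm-8}). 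None of these arguments assume Brouwer's conjecture, and they cover ranges of $k$, edge densities, and degree conditions that your reduction cannot reach. If you want to contribute to Conjecture~\ref{conj-4} rather than restate its relationship to Conjecture~\ref{conj-3}, the work to be done is of this second kind.
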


Notice that the full Brouwer's conjecture implies Conjecture \ref{conj-4} and thus,
the (sufficient) conditions for which the full Brouwer's conjecture holds also apply to Conjecture \ref{conj-4};
in particular, Conjecture \ref{conj-4} holds for $k\in\{1,2,n-3,n-2,n-1\}$.
Conversely, if Conjecture \ref{conj-4} is true, then the full Brouwer's conjecture is also true for self-complement graphs
(i.e., the graphs that are isomorphic to their complements).

The rest of the paper is organized as follows.
In Section 2, we will present a brief introduction to necessary terminology and notation,
and give some lemmas that will be used to prove our main results in the coming sections.
In Section 3, we shall prove that the full Brouwer's conjecture is true for
two families of spanning subgraphs of complete split graphs
and for $c$-cyclic graphs with $c\in\{0,1,2\}$,
and present partial solutions to Conjecture \ref{conj-4}.
Finally, a concluding remark will be made in Section 4.

\section{Preliminaries}

We only consider finite and undirected graphs without multiple edges and self-loops.
Given a graph $G$, we write $V(G)$ for its vertex set and $E(G)$ for its edge set, and let $|V(G)|=n$ and let $e(G):=|E(G)|$.
If $H$ is a subgraph of $G$, let $G-E(H)$ denote the graph obtained by removing the edges in $H$ from $G$.
The complement of $G$ is defined to be $\overline{G}:=K_n-E(G)$,
where $K_n$ is the complete graph with $n$ vertices.
For two vertex-disjoint graphs $G_1$ and $G_2$, the union of $G_1$ and $G_2$, denoted by $G_1\cup G_2$,
is the graph having the vertex set $V(G_1)\cup V(G_2)$ and the edge set $E(G_1)\cup E(G_2)$,
whereas the join of $G_1$ and $G_2$, denoted by $G_1\vee G_2$, is the graph having the vertex set $G_1\cup G_2$
and the edge set $E(G_1)\cup E(G_2)\cup\{uv: u\in V(G_1), v\in V(G_2)\}$.
We also denote by $tG$ the vertex-disjoint union of $t$ copies of $G$.
An independent set of a graph $G$ is a set of mutually non-adjacent vertices in $G$,
whereas a clique of $G$ is a set of mutually adjacent vertices in $G$;
the maximum size of a clique of $G$, denoted by $\omega(G)$, is known as the clique number of $G$.

The degree sequence of a graph $G$ is defined to be
$d(G):=(d_1,d_2,\cdots,d_n),$
where $d_i$ is the degree of a vertex in $G$ and assume, without loss of generality, that $d_1\geq d_2\geq \cdots\geq d_n$.
Observe that $d(G)$ is an $n$-partition of $2e(G)$.
The conjugate of $d(G)$, known as the conjugate degree sequence of the graph $G$, is the sequence
$d^*(G):=(d_1^*,d_2^*,\cdots,d_n^*)$, where $d_i^*:=|\{j: d_j\geq i\}|$.
It is easy to see that $d_1^*\geq d_2^*\geq \cdots\geq d_n^*=0$.
The conjugate degree sequence of $G$ is conveniently visualized by means of Ferrers-Sylvester (or Young) diagram,
which consists of $n$ left-justified rows of boxes with $d_i$ boxes in row $i$.
In this diagram, the conjugate degree $d_i^*$ counts the number of boxes in column $i$.
An interesting fact concerning $d(G)$ and $d^*(G)$ is that \cite{Ruch}
\begin{eqnarray}
d_i+1\leq d_i^*\,\, \mbox{for}\,\, 1\leq i\leq T, \label{eq-1-1}
\end{eqnarray}
where $T:=\max\{i: d_i\geq i\}$ is known to be the trace of $d(G)$. 
Moreover, the equalities hold in (\ref{eq-1-1}) for $1\leq i\leq T$ if and only if $G$ is a threshold graph \cite{Merris2}.
It is also shown in \cite{Merris2} that if $G$ is a threshold graph, then
\begin{eqnarray*}
d_{i+1}=d_i^*\,\, \mbox{for}\,\, T+1\leq i\leq n-1. 
\end{eqnarray*}
These yield that in the Ferrers-Sylvester diagram of any threshold graph
the part on and above the diagonal boxes is exactly the transpose of the part below the diagonal. 
See Figure 1 for an illustration, 
where the left hand side is the Ferrers-Sylvester diagram of the (threshold) graph given by the degree sequence $d(G)=(7, 5, 4, 3, 3, 2, 1, 1)$,
whereas the right hand side draws the general appearance of the Ferrers-Sylvester diagram of a threshold graph.

\begin{figure}
  \centering
  \includegraphics[width=12cm]{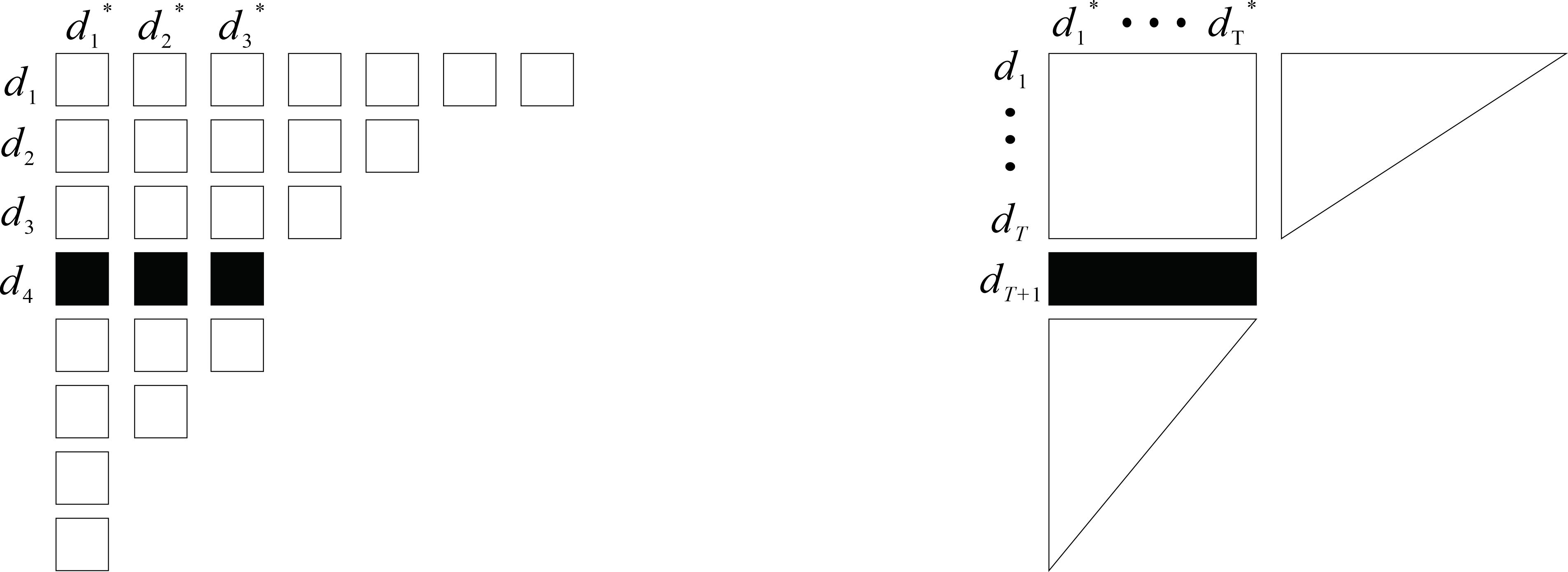}\\
  \caption{The Ferrers-Sylvester diagram of a threshold graph.} \label{fig1}
\end{figure}

There is also a constructive characterization for threshold graphs, that is,
a graph is threshold if and only if it can be constructed through an iterative process which starts with an isolated vertex,
and where, at each step, either a new isolated vertex is added, or a new dominating vertex
(i.e., a vertex adjacent to all previous vertices) is added \cite{Mahadev}.
Obviously, given a threshold graph $G$, all dominating vertices form a clique of $G$,
whereas the remaining isolated vertices form an independent set of $G$;
such a graph is also called a split graph, that is, the graph whose vertex set can be partitioned into a clique $V_1$ and an independent set $V_2$.
In particular, if every vertex in $V_2$ is adjacent to every vertex in $V_1$, then it is known as a complete split graph, which is also a threshold graph.
Note that, in general, a split graph is not necessarily threshold.

A $c$-cyclic graph is a connected graph in which the number of edges equals the number of vertices plus $c-1$.
Customarily, we refer to a $c$-cyclic graph as a tree, unicyclic graph, and bicyclic graph when $c=0,1,$ and $2$, respectively.
Intuitively speaking, a tree is a connected graph without cycles, whereas a unicyclic graph is a cycle with possible trees attached.
For a bicyclic graph $G$, its base, denoted by $\widehat{G}$, is the (unique) minimal bicyclic subgraph of $G$.
Note that a bicyclic graph can always be obtained by attaching trees to some vertices of its base.
It is known that \cite {GuoS} there are just two types of bases for bicyclic graphs, that is,
the $\infty$-graph $\infty(p,l,q)$ ($p\geq q\geq 3, l\geq 1$) and $\theta$-graph $\theta(p,l,q)$ ($p\geq q\geq l\geq 1$), which are pictured in Figure 2.

\begin{figure}
  \centering
  \includegraphics[width=12cm]{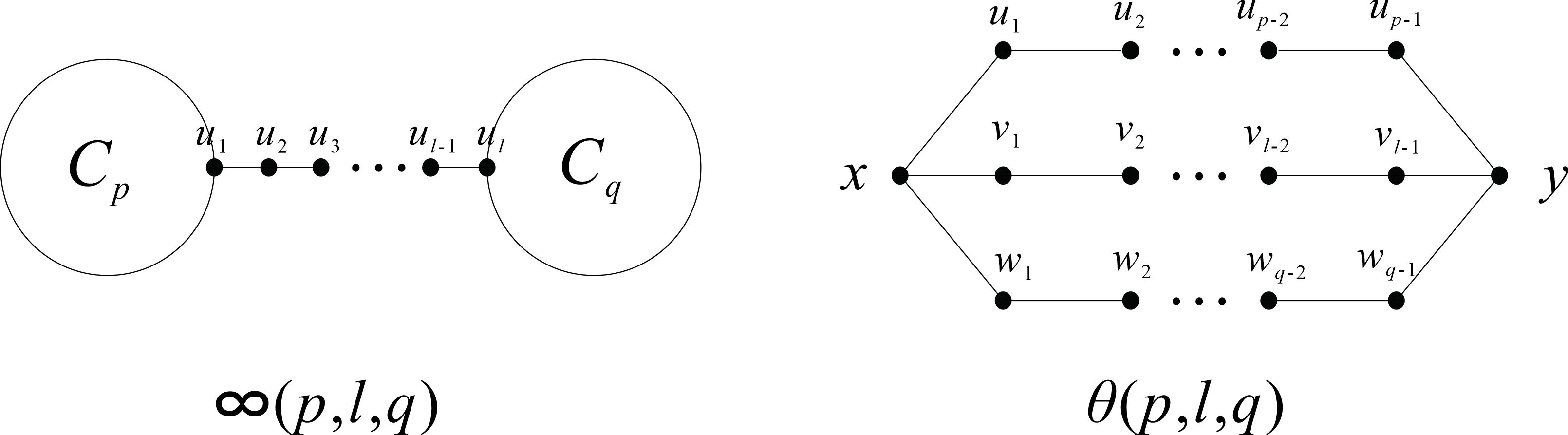}\\
  \caption{The $\infty$-graph $\infty(p,l,q)$ and $\theta$-graph $\theta(p,l,q)$.} \label{fig2}
\end{figure}

The Laplacian matrix of a graph $G$ is defined to be $L(G)=D(G)-A(G)$,
where $A(G)$ is the adjacency matrix of $G$ and $D(G)$ is the diagonal matrix of vertex degrees of $G$.
Notice that $L(G)$ is positive-semidefinite, and hence has non-negative real numbers as eigenvalues,
which are usually denoted in non-increasing order by $\mu_1\geq\mu_2\geq\cdots\geq\mu_n$.
When more than one graph is under discussion, we write $\mu_i(G)$ instead of $\mu_i$.
We also let $\mu(G):=(\mu_1,\mu_2,\cdots, \mu_n)$, and call it the Laplacian eigenvalue sequence of the graph $G$.
It is well known that $\mu_n=0$ and, $\mu_{n-1}>0$ if and only if $G$ is connected (see, e.g., \cite{Brouwer1}).

The Laplacian eigenvalue sequence $\mu(G)$ of a graph $G$ is related intimately to its conjugated degree sequence $d^*(G)$,
just as shown by Grone-Merris-Bai theorem,
$$\mu(G) \preceq d^*(G),$$
with equality holding if and only if $G$ is a threshold graph.
Here $\preceq$ indicates the majorization partial order, that is,
$$\sum_{i=1}^{k} \mu_{i} \leq \sum_{i=1}^{k} d_{i}^*\,\, \text{for}\,\, 1\leq k<n,\,\, \text{and}\,\, \sum_{i=1}^{n} \mu_{i}=\sum_{i=1}^{n} d_{i}^*.$$
In this case, we say that $\mu(G)$ is majorized by $d^*(G)$.

There are some other properties of the Laplacian eigenvalues of a graph that we shall use frequently in the following sections.

\begin{lemma}[see \cite{Merris}] \label{lm-1}
	(i) Let $G$ be a graph with $n$ vertices and let $\overline{G}$ be its complement. Then
	$\mu_n(\overline{G})=0$ and $\mu_i(\overline{G})=n-\mu_{n-i}(G)$ for $i=1,2,\cdots,n-1$.

    (ii) For any graph $G$ with $n$ vertices, we have $\mu_1(G)\leq n$.

    (iii) Let $G_1$ and $G_2$ be two vertex-disjoint graphs with $n_1$ and $n_2$ vertices, respectively.
	Then the Laplacian eigenvalues of $G_1\vee G_2$ are $n_1+n_2, n_1+\mu_1(G_2),\cdots,n_1+\mu_{n_2-1}(G_2), n_2+\mu_1(G_1),\cdots,n_2+\mu_{n_1-1}(G_1)$, and $0$.

    (iv) Let $G$ be a graph with $n$ vertices and let $G^\prime$ be the graph obtained by deleting an arbitrary edge from $G$.
    Then the Laplacian eigenvalues of $G$ and $G^\prime$ interlace, that is,
    $\mu_1(G)\geq \mu_1(G^\prime)\geq \mu_2(G)\geq\mu_2(G^\prime)\geq \cdots\geq\mu_{n-1}(G)\geq\mu_{n-1}(G^\prime)\geq \mu_n(G)=\mu_n(G^\prime)=0$.
\end{lemma}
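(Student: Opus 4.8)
The plan is to derive all four parts from the single identity $L(G)+L(\overline{G})=L(K_{n})=nI-J$ (with $J$ the all-ones matrix) together with the Courant--Fischer (Rayleigh quotient) description of eigenvalues and the standard Weyl inequalities for rank-one perturbations. I would prove part (ii) first, and in a self-contained way so as not to create a circular dependence on part (i): since $L(\overline{G})$ is positive semidefinite, for every unit vector $x$ one has $x^{T}L(G)x\le x^{T}(nI-J)x = n-(\mathbf{1}^{T}x)^{2}\le n$, and maximizing over $x$ gives $\mu_{1}(G)\le n$.

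For part (i), the key point is that both $L(G)$ and $L(\overline{G})$ annihilate the all-ones vector $\mathbf{1}$ and hence leave the $(n-1)$-dimensional subspace $\mathbf{1}^{\perp}$ invariant, while $J$ vanishes on $\mathbf{1}^{\perp}$; thus $L(\overline{G})=nI-L(G)$ as operators on $\mathbf{1}^{\perp}$. The eigenvalues of $L(G)$ on $\mathbf{1}^{\perp}$ are exactly $\mu_{1}(G)\ge\cdots\ge\mu_{n-1}(G)$, so those of $L(\overline{G})$ there are the numbers $n-\mu_{j}(G)$; re-sorting them in decreasing order (the smallest $\mu_{j}(G)$ produces the largest complementary value) yields $\mu_{i}(\overline{G})=n-\mu_{n-i}(G)$ for $1\le i\le n-1$, while $\mu_{n}(\overline{G})=0$ comes from the $\mathbf{1}$ direction. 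Part (ii) is what guarantees these quantities are nonnegative, so the ordering is consistent.

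For part (iii), I would write $L(G_{1}\vee G_{2})$ in $2\times 2$ block form, with diagonal blocks $L(G_{1})+n_{2}I$ and $L(G_{2})+n_{1}I$ and off-diagonal blocks $-J$ (the all-ones $n_{1}\times n_{2}$ matrix). If $L(G_{1})v=\mu v$ with $v\perp\mathbf{1}_{n_{1}}$, then $(v,0)$ is an eigenvector of $L(G_{1}\vee G_{2})$ for the eigenvalue $\mu+n_{2}$ because $J^{T}v=0$; symmetrically for $G_{2}$. This exhibits $(n_{1}-1)+(n_{2}-1)$ mutually orthogonal eigenvectors, and the remaining two-dimensional invariant subspace is spanned by $(\mathbf{1}_{n_{1}},0)$ and $(0,\mathbf{1}_{n_{2}})$, on which $L(G_{1}\vee G_{2})$ acts by the matrix $\left(\begin{smallmatrix} n_{2} & -n_{2}\\ -n_{1} & n_{1}\end{smallmatrix}\right)$ with eigenvalues $0$ and $n_{1}+n_{2}$; collecting everything and re-sorting gives the claimed list. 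Finally, for part (iv), I would use $L(G)=L(G')+(e_{u}-e_{v})(e_{u}-e_{v})^{T}$, where $uv$ is the removed edge; the perturbation is positive semidefinite of rank one, so Weyl's interlacing inequalities give $\mu_{i}(G)\ge\mu_{i}(G')\ge\mu_{i+1}(G)$ for every $i$, and $\mu_{n}(G)=\mu_{n}(G')=0$ since $\mathbf{1}$ lies in both kernels.

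None of the steps is deep — the statement is essentially contained in \cite{Merris}, which one could simply cite — and the only real care needed is bookkeeping: in part (iii) one must check that the displayed eigenvectors are pairwise orthogonal and that their count reaches $n_{1}+n_{2}$, and throughout one must order the complemented, joined, and perturbed eigenvalues correctly while avoiding the use of part (i) inside the proof of part (ii). I expect that verifying completeness of the eigenbasis in part (iii) is the place most likely to need an explicit word of justification.
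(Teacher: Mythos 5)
Your proof is correct, but note that the paper does not prove this lemma at all: it is stated as a known result and cited to Merris's survey, so there is no in-paper argument to compare against. Your four derivations (the quadratic-form bound via $L(G)+L(\overline G)=nI-J$ for (ii), the restriction to $\mathbf{1}^{\perp}$ for (i), the explicit eigenbasis plus the $2\times 2$ block on $\mathrm{span}\{(\mathbf{1}_{n_1},0),(0,\mathbf{1}_{n_2})\}$ for (iii), and Weyl interlacing for the rank-one PSD perturbation $(e_u-e_v)(e_u-e_v)^{T}$ in (iv)) are exactly the standard arguments one finds in the cited references, and the count $(n_1-1)+(n_2-1)+2=n_1+n_2$ together with the orthogonality check does close the only gap you flagged.
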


\begin{lemma}[see \cite{Du}] \label{lm-2}
Let $T_{n}^i$ be the tree of order $n$ as shown in Figure \ref{fig3}.
For $n\geq 6$ (resp. $n\geq 7$) and $i=2$ (resp. $i=3$), we have $1<\mu_2(T_{n}^i)<2.7$.
\end{lemma}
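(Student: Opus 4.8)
The plan is to read off from Figure \ref{fig3} that $T_n^i$ consists of one ``hub'' vertex $c$ carrying $i$ pendant paths $c\,a_j\,b_j$ of length two ($j=1,\dots,i$) together with $n-2i-1$ pendant leaves at $c$, and to exploit just two features of this graph. First, $\deg c=n-i-1$, so deleting the $i$ edges $a_jb_j$ leaves the star $K_{1,n-i-1}$ plus isolated vertices; by an $i$-fold application of Lemma \ref{lm-1}(iv), $\mu_1(T_n^i)\ge (n-i-1)+1=n-i\ge 4$ throughout the stated ranges of $n$. Second, the $2\times2$ matrix $M$ with rows $(2,-1)$ and $(-1,1)$, whose quadratic form is $(a,b)\,M\,(a,b)^{\top}=a^2+(a-b)^2$, has largest eigenvalue $\tfrac{3+\sqrt5}{2}\approx 2.618<2.7$, which is $<\mu_1(T_n^i)$.

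For the upper bound I would apply the Courant--Fischer formula $\mu_2(T_n^i)=\min_{\dim U=n-1}\ \max_{0\ne x\in U}\ \dfrac{x^{\top}L x}{x^{\top}x}$ with the $(n-1)$-dimensional test space $U=\{x:x_c=0\}$. On $U$ one computes $x^{\top}Lx=\sum_{\ell}x_\ell^2+\sum_{j=1}^{i}\bigl(x_{a_j}^2+(x_{a_j}-x_{b_j})^2\bigr)$ and $x^{\top}x=\sum_{\ell}x_\ell^2+\sum_{j=1}^{i}\bigl(x_{a_j}^2+x_{b_j}^2\bigr)$, the sums over $\ell$ running over the leaves at $c$. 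Comparing term by term, each leaf term contributes ratio $1$ and each cherry term contributes ratio at most $\tfrac{3+\sqrt5}{2}$ by the Rayleigh bound for $M$; hence $\dfrac{x^{\top}Lx}{x^{\top}x}\le\tfrac{3+\sqrt5}{2}$ for all nonzero $x\in U$, which gives $\mu_2(T_n^i)\le\tfrac{3+\sqrt5}{2}<2.7$.

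For the lower bound I would exhibit an explicit eigenvector. Let $(s,t)$ be an eigenvector of $M$ for the eigenvalue $\tfrac{3+\sqrt5}{2}$; then the vector equal to $s,t,-s,-t$ on $a_1,b_1,a_2,b_2$ and $0$ elsewhere is directly checked to satisfy $Lx=\tfrac{3+\sqrt5}{2}\,x$ (the defining equations at $a_j,b_j$ are the two rows of $M$, and the equation at $c$ holds because the contributions of the two cherries cancel). Since $\tfrac{3+\sqrt5}{2}<\mu_1(T_n^i)$, this value occurs at position $\ge2$ in the non-increasing list of Laplacian eigenvalues, so $\mu_2(T_n^i)\ge\tfrac{3+\sqrt5}{2}>1$. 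Combining the two steps yields $1<\mu_2(T_n^i)<2.7$; in fact $\mu_2(T_n^i)=\tfrac{3+\sqrt5}{2}$.

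The one point that really needs care is matching the test space $U$ and the term-by-term comparison to the exact branch structure shown in Figure \ref{fig3}: a pendant $P_2$ at $c$ yields the grounded form $M$ with top eigenvalue $\tfrac{3+\sqrt5}{2}<2.7$, whereas a pendant $P_3$ would yield grounded top eigenvalue $\approx 3.2>2.7$, so the argument is genuinely sensitive to the lengths of the pendant branches present (this is exactly where the slack between $2.618$ and $2.7$ is consumed). Once the branch structure is fixed the argument is uniform in $n$, so the thresholds $n\ge6$ and $n\ge7$ enter only to guarantee that $T_n^i$ is a well-defined tree with at least two cherries, and no case analysis in $n$ is required.
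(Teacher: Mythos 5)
Your proof is correct, and it takes a genuinely different route from the one the paper relies on. The paper itself gives no proof of this lemma --- it is quoted from \cite{Du}, where the bound is extracted from the explicit factorization of the characteristic polynomial $\phi(T_n^i;x)=x(x-1)^{n-2i-2}g(x)$ (the same polynomials $g_1,g_2$ that reappear in the proof of Lemma \ref{lm-3-2-1} for bounding $\mu_1$) by locating the second root of $g$ through sign evaluations. You instead work structurally: the grounded test space $\{x:x_c=0\}$ in Courant--Fischer, a term-by-term (mediant) comparison reducing each pendant $P_2$ to the $2\times 2$ grounded form $M$ with top eigenvalue $\tfrac{3+\sqrt5}{2}$, and an explicit antisymmetric eigenvector supported on two cherries for the matching lower bound. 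All the details check out: your reading of Figure \ref{fig3} is confirmed by the paper's own data ($\mu(T_6^2)$ contains $2.61803$, and the multiplicity $n-6$ of the eigenvalue $1$ matches $n-2i-1$ leaves at the hub), the eigenvector equations at $a_j$, $b_j$, $c$ and the leaves are exactly as you state, and $\mu_1\ge n-i>\tfrac{3+\sqrt5}{2}$ follows from $i$ applications of Lemma \ref{lm-1}(iv). Your argument buys more than the cited one: it is uniform in $n$ and in $i\ge 2$, avoids any polynomial root analysis, and actually pins down the exact value $\mu_2(T_n^i)=\tfrac{3+\sqrt5}{2}$, which is strictly stronger than the stated $1<\mu_2<2.7$. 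The trade-off is that it leans entirely on the correct identification of the branch structure (pendant paths of length exactly $2$), as you rightly flag --- but that identification is forced by the characteristic polynomials quoted elsewhere in the paper.
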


\begin{figure}
  \centering
  \includegraphics[width=7cm]{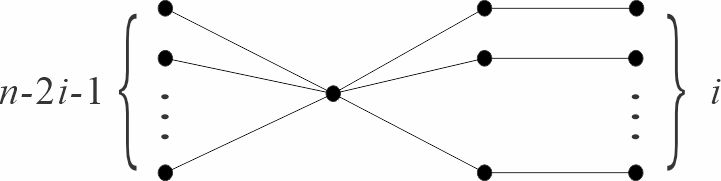}\\
  \caption{The tree $T_n^{i}$, $0\leq i\leq (n-1)/2$.}\label{fig3}
\end{figure}

Let $\lambda_i(M)$ denote the $i$-th largest eigenvalue of a real symmetric matrix $M$ of order $n$.
The next result from matrix theory is known as Fan's inequality.

\begin{lemma}[Fan's inequality \cite{Fan}] \label{lm-3}
	If $B$ and $C$ are real symmetric matrices of order $n$, then for any integer $k$ with $1\leq k\leq n$,
	$$\sum_{i=1}^k\lambda_i(B+C)\leq\sum_{i=1}^k\lambda_i(B)+\sum_{i=1}^k\lambda_i(C).$$
\end{lemma}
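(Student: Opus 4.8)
The plan is to derive Fan's inequality from the \emph{Ky Fan maximum principle}, which characterizes the sum of the $k$ largest eigenvalues of a real symmetric matrix $M$ of order $n$ as
$$\sum_{i=1}^{k}\lambda_i(M)=\max\left\{\operatorname{tr}(U^{\top}MU):U\in\mathbb{R}^{n\times k},\ U^{\top}U=I_k\right\}.$$
Once this identity is available, the lemma is immediate: for any $U$ with $U^{\top}U=I_k$, linearity of the trace gives
$$\operatorname{tr}\bigl(U^{\top}(B+C)U\bigr)=\operatorname{tr}(U^{\top}BU)+\operatorname{tr}(U^{\top}CU)\leq\sum_{i=1}^{k}\lambda_i(B)+\sum_{i=1}^{k}\lambda_i(C),$$
since each of the two terms on the right of the first equality is bounded by the corresponding maximum. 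Taking the maximum over all such $U$ on the left-hand side, which by the principle equals $\sum_{i=1}^{k}\lambda_i(B+C)$, yields exactly the claimed inequality.

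So the real work is proving the maximum principle. First I would diagonalize $M=\sum_{i=1}^{n}\lambda_i(M)\,v_iv_i^{\top}$, with $\{v_i\}$ an orthonormal basis of eigenvectors. Fixing $U=[u_1,\dots,u_k]$ with orthonormal columns, expand $u_j=\sum_{i}c_{ij}v_i$ (so $c_{ij}=v_i^{\top}u_j$) and compute
$$\operatorname{tr}(U^{\top}MU)=\sum_{j=1}^{k}u_j^{\top}Mu_j=\sum_{i=1}^{n}\lambda_i(M)\,t_i,\qquad t_i:=\sum_{j=1}^{k}c_{ij}^{2}.$$
Writing $C=(c_{ij})\in\mathbb{R}^{n\times k}$, orthonormality of the $u_j$'s gives $C^{\top}C=I_k$, so $CC^{\top}$ is an orthogonal projection of rank $k$; its diagonal entries are precisely the $t_i$, whence $0\leq t_i\leq 1$ and $\sum_i t_i=\operatorname{tr}(CC^{\top})=k$. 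Since $\lambda_1(M)\geq\cdots\geq\lambda_n(M)$, a one-line rearrangement argument shows that maximizing the linear functional $\sum_i\lambda_i(M)t_i$ over this polytope forces the optimum at $t_1=\cdots=t_k=1$, $t_{k+1}=\cdots=t_n=0$, with value $\sum_{i=1}^{k}\lambda_i(M)$; this value is attained by $U=[v_1,\dots,v_k]$, which completes the proof of the principle.

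The only non-routine step is the maximum principle itself, and within it the observation that the diagonal of a rank-$k$ orthogonal projection is majorized by $(1,\dots,1,0,\dots,0)$; the remaining ingredients — the spectral expansion, linearity of the trace, and the passage to the maximum — are bookkeeping. Should one prefer to avoid the projection argument, an alternative is to establish the principle by induction on $k$ via the Courant–Fischer min–max characterization of a single eigenvalue, but the route sketched above is shorter and self-contained.
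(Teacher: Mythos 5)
Your argument is correct and complete. Note that the paper itself offers no proof of this lemma: it is stated as a known result and attributed to Fan's 1949 paper, so there is no in-paper argument to compare against. Your route — first establishing the Ky Fan maximum principle $\sum_{i=1}^{k}\lambda_i(M)=\max\{\operatorname{tr}(U^{\top}MU):U^{\top}U=I_k\}$ and then reading off subadditivity from linearity of the trace — is the standard modern proof, and every step checks out: the identity $\operatorname{tr}(U^{\top}MU)=\sum_i\lambda_i(M)t_i$ with $t_i$ the diagonal entries of the rank-$k$ projection $CC^{\top}$, the constraints $0\leq t_i\leq 1$ and $\sum_i t_i=k$ (the bound $t_i\leq 1$ follows from $P_{ii}=\sum_{j}P_{ij}^{2}\geq P_{ii}^{2}$ for the projection $P=CC^{\top}$, or equivalently by completing $u_1,\dots,u_k$ to an orthonormal basis), and the linear-programming step $\sum_i\lambda_i t_i-\sum_{i=1}^{k}\lambda_i=\sum_{i\leq k}\lambda_i(t_i-1)+\sum_{i>k}\lambda_i t_i\leq\lambda_k\bigl(\sum_i t_i-k\bigr)=0$. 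For the purposes of this paper only the inequality is used (in the proof of Theorem 3.8, where $L(G)=D(G)-A(G)$ is split into its diagonal and off-diagonal parts), so your self-contained derivation is more than adequate; the alternative induction via Courant--Fischer that you mention would also work but is longer.
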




For an integer $k$ with $1\leq k\leq n$, let $s_{k}(G)$ denote the sum of the $k$ largest Laplacian eigenvalues of a graph $G$,
i.e., $s_{k}(G)=\sum_{i=1}^{k}\mu_i(G)$.

\begin{lemma}[see \cite{Wang}] \label{lm-5}
    Let $G$ be a connected graph on $n$ vertices. Then for any integer $k$ with $1\leq k\leq n$,
    $$s_{k}(G) \leq 2e(G)-n+2k-\frac{2k-2}{n}.$$
    Moreover, equality is achieved only when $k=1$ and $G\cong S_n$ (i.e., the star on $n$ vertices).\footnote{The characterization for equality follows from Theorem 1.1 in \cite{Frischer}, which is missed in the original version of this result.}
\end{lemma}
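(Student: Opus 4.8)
The plan is to reduce from connected graphs to trees by repeatedly deleting edges and applying Fan's inequality, and then to invoke the sharp bound for trees. Concretely, suppose $e(G)\ge n$, so that $G$ contains a cycle; choose an edge $e$ on a cycle of $G$, so that $G'=G-e$ is still connected with $e(G')=e(G)-1$. Writing $L(G)=L(G')+L_e$, where $L_e$ is the Laplacian of the one-edge graph on $V(G)$ consisting of $e$ alone (its only nonzero eigenvalue is $2$, so $s_k(L_e)=2$ for all $k\ge1$), Fan's inequality (Lemma \ref{lm-3}) gives $s_k(G)\le s_k(G')+2$. Since $2e(G')-n+2k-\tfrac{2(k-1)}{n}=\bigl(2e(G)-n+2k-\tfrac{2(k-1)}{n}\bigr)-2$, an induction on $e(G)$ reduces the claim to the base case $e(G)=n-1$, i.e.\ to the case where $G=T$ is a tree, where the target is
\[
s_k(T)\ \le\ (n-1)+2k-1-\tfrac{2(k-1)}{n}.
\]

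For a tree $T$ I would appeal to the sharp tree bound $s_k(T)\le (n-1)+2k-1-\tfrac{2(k-1)}{n}$, with equality only for $k=1$ and $T\cong S_n$. Its proof again uses Fan's inequality: deleting $k-1$ edges of $T$ yields a spanning forest with $k$ components; the $k-1$ deleted edges span a graph whose Laplacian has trace $2(k-1)$ and rank $<k$, and so contributes exactly $2(k-1)$, while the forest contributes at most $n$ because the largest Laplacian eigenvalue of a component on $p$ vertices is at most $p$ (Lemma \ref{lm-1}(ii)); this already gives the weaker estimate $s_k(T)\le (n-1)+2k-1$. Extracting the additional $\tfrac{2(k-1)}{n}$, and pinning down equality, is the delicate point: one must choose the $k-1$ edges so that the resulting forest is as ``spread out'' as possible (no isolated vertices, components not near-stars), quantify how far below its order the largest eigenvalue of a non-star component lies, and handle by hand the few trees for which every choice of $k-1$ edges breaks $T$ into stars. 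This refined inequality is the sharp tree bound of Fritscher, Hoppen, Rocha and Trevisan.

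For the equality statement in the connected case, equality in the displayed inequality for $G$ forces equality at each edge-deletion step above and in the tree case; the tree bound then forces $k=1$ and the resulting tree to be $S_n$. But for $k=1$ an edge-deletion step would require $\mu_1(G)=\mu_1(G-e)+2$, impossible since $\mu_1\le n$ always (Lemma \ref{lm-1}(ii)) and $\mu_1(S_n)=n$. Hence no cycle edge was ever deleted, and $G\cong S_n$.

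I expect the routine parts to be the reduction and the equality bookkeeping, and the main obstacle to be the sharp tree estimate itself — specifically, a clean lower bound for $p-\mu_1(T_i)$ when $T_i$ is a non-star tree on $p$ vertices, together with the optimal choice of which $k-1$ edges of $T$ to delete.
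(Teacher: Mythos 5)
The paper does not actually prove this lemma: it is quoted from \cite{Wang}, with the equality characterization imported from Theorem 1.1 of \cite{Frischer} (see the footnote attached to the statement). Your reduction is exactly the standard derivation behind that cited result: peel off cycle edges one at a time, apply Fan's inequality to $L(G)=L(G-e)+L_e$ with $s_k(L_e)=2$, and land on the sharp tree bound of Fritscher, Hoppen, Rocha and Trevisan. Your equality bookkeeping is also correct: equality forces equality in the tree bound, hence $k=1$ and a spanning star, and then $\mu_1(G)\leq n$ (Lemma \ref{lm-1}(ii)) rules out any deleted cycle edge, so $G\cong S_n$. The proposal is therefore sound modulo the tree theorem, which is precisely the ingredient the paper itself takes as a black box.

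One caveat about your sketch of the tree bound: the step ``the forest contributes at most $n$ because the largest Laplacian eigenvalue of a component on $p$ vertices is at most $p$'' is not valid as written, because the $k$ largest Laplacian eigenvalues of a forest with $k$ components need not be one per component. For instance, $F=P_4\cup K_1$ has $s_2(F)=4+\sqrt{2}>5=n$. One really does need the deleted edges chosen so that no component is trivial, together with an input such as $s_j(T_i)\leq e(T_i)+2j-1$ on each component, before the totals telescope to $n$; this is part of why extracting the extra $\frac{2k-2}{n}$ is delicate. Since you invoke the published tree theorem rather than reproving it, this inaccuracy does not affect the correctness of your overall argument, but it should not be presented as a proof of even the weak estimate.
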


\begin{lemma}[see \cite{Zhou}] \label{lm-6}
	Let $G$ be a graph on $n$ vertices.\footnote{From the proof of Theorem 1 in \cite{Zhou},
    it is easy to see that the connectivity condition can be removed when deducing the inequality.}
    Then for any integer $k$ with $1\leq k\leq n-2$,
	$$s_k(G)\leq \frac{2ke(G)+\sqrt{k(n-k-1)\big[n(n-1)-2e(G)\big]e(G)}}{n-1}.$$
    Moreover, if $G$ is connected, then equality holds if and only if $G\cong S_n$ or $K_n$ when $k=1$,
    and $G\cong K_n$ when $2\leq k\leq n-2$.
\end{lemma}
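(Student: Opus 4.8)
The plan is to derive the bound from de Caen's inequality on $\sum_v d_v^2$ together with a Cauchy--Schwarz (equivalently, Lagrange-identity) estimate that splits the nonzero Laplacian eigenvalues into the top $k$ and the remaining $n-1-k$. Put $m=e(G)$. Since $L(G)=D(G)-A(G)$ and $\operatorname{tr}A(G)^2=2m$, one has $\sum_{i=1}^n\mu_i=\operatorname{tr}L(G)=2m$ and $\sum_{i=1}^n\mu_i^2=\operatorname{tr}L(G)^2=\sum_v d_v^2+2m$; since $\mu_n=0$, both sums may be restricted to $i=1,\dots,n-1$. De Caen's inequality $\sum_v d_v^2\le m\bigl(\tfrac{2m}{n-1}+n-2\bigr)$, valid for every $n$-vertex $m$-edge graph, therefore gives
\[
\sum_{i=1}^{n-1}\mu_i^2\ \le\ \frac{2m^2}{n-1}+mn .
\]
Connectedness plays no role here, which is why it can be dropped when only the inequality is wanted.

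Next, by Lagrange's identity applied separately to $\mu_1,\dots,\mu_k$ and to $\mu_{k+1},\dots,\mu_{n-1}$ (the latter set nonempty because $k\le n-2$),
\[
\sum_{i=1}^k\mu_i^2\ \ge\ \frac{s_k(G)^2}{k},
\qquad
\sum_{i=k+1}^{n-1}\mu_i^2\ \ge\ \frac{\bigl(2m-s_k(G)\bigr)^2}{n-1-k},
\]
with equality in the first precisely when $\mu_1=\cdots=\mu_k$ and in the second precisely when $\mu_{k+1}=\cdots=\mu_{n-1}$. Adding these and invoking the previous display yields $\tfrac{s_k^2}{k}+\tfrac{(2m-s_k)^2}{n-1-k}\le\tfrac{2m^2}{n-1}+mn$, which after clearing denominators becomes the quadratic inequality
\[
(n-1)s_k^2-4mk\,s_k+4m^2k-\Bigl(\tfrac{2m^2}{n-1}+mn\Bigr)k(n-1-k)\ \le\ 0 .
\]
Because $s_k\ge\tfrac{k}{n-1}\sum_{i=1}^{n-1}\mu_i=\tfrac{2mk}{n-1}$, the value $s_k$ lies at or beyond the axis of this upward parabola, hence cannot exceed its larger root; simplifying the discriminant (it factors as $4mk(n-k-1)\bigl(n(n-1)-2m\bigr)$) produces exactly $s_k(G)\le\bigl(2mk+\sqrt{k(n-k-1)(n(n-1)-2m)m}\bigr)/(n-1)$.

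For the equality case one observes that the displayed chain is tight if and only if de Caen's bound and both Lagrange estimates hold with equality. When $G$ is connected, equality in de Caen's inequality forces $G\cong K_n$ or $G\cong S_n$ (this is the known characterization of de Caen's extremal graphs restricted to the connected case; it can also be recovered by noting that the two Lagrange conditions force the Laplacian spectrum to have the form $\{a^{(k)},b^{(n-1-k)},0\}$ and then classifying such graphs under the second-moment equality). Finally, $K_n$ has $\mu_1=\cdots=\mu_{n-1}=n$ and so meets both Lagrange conditions for every admissible $k$, whereas $S_n$ has $\mu_1=n>1=\mu_2=\cdots=\mu_{n-1}$ and meets them only when $k=1$; this is exactly the asserted characterization. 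I expect this last step to be the main obstacle: the inequality is essentially bookkeeping once de Caen's bound is in hand, but settling the equality case requires the precise description of de Caen's extremal connected graphs and a careful reconciliation with the two Lagrange equalities.
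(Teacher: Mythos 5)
Your derivation is correct and is essentially the argument behind the cited result: the paper gives no proof of this lemma, referring instead to Zhou's Theorem 1, whose proof is exactly this combination of de Caen's inequality $\sum_v d_v^2\le e(G)\bigl(\tfrac{2e(G)}{n-1}+n-2\bigr)$ with the Cauchy--Schwarz split of the nonzero Laplacian eigenvalues into the top $k$ and the remaining $n-1-k$, followed by solving the resulting quadratic in $s_k$. Your handling of the equality case (reducing to the connected extremal graphs of de Caen's bound, namely $S_n$ and $K_n$, and then checking the two Cauchy--Schwarz equality conditions against their spectra) also matches the standard route, so there is nothing to add.
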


As mentioned in the introduction, the full Brouwer's conjecture has been proven to be true for $k\leq 2$ and $k=n-1$.
When restricting to connected graphs, we have the following.

\begin{lemma}[see \cite{Li}] \label{lm-7}
    Let $G$ be a connected graph on $n$ vertices. Then

    (i) $s_{1}(G) \leq e(G)+1$ with equality if and only if $G\cong S_n$.

    (ii) $s_{2}(G) \leq e(G)+3$ with equality if and only if $G\cong G(s,n-2-s)$, where $0\leq s\leq n-3$ (see Figure \ref{fig4}).

    (iii) $s_{n-1}(G)\leq e(G)+\binom{n}{2}$ with equality if and only if $G\cong K_n$.
\end{lemma}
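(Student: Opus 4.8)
I would dispose of these first, since they are short. For (iii), note that $\mu_n(G)=0$ and $\sum_{i=1}^n\mu_i(G)=\operatorname{tr}L(G)=2e(G)$, so $s_{n-1}(G)=2e(G)$; thus the claimed bound is merely $e(G)\le\binom{n}{2}$, with equality exactly when $G\cong K_n$. For (i), Lemma~\ref{lm-1}(ii) gives $\mu_1(G)\le n$ and connectedness gives $e(G)\ge n-1$, so $s_1(G)=\mu_1(G)\le n\le e(G)+1$; equality forces $\mu_1(G)=n$ and $e(G)=n-1$ simultaneously. By Lemma~\ref{lm-1}(i), $\mu_1(G)=n$ iff $\mu_{n-1}(\overline G)=0$ iff $\overline G$ is disconnected, i.e.\ $G$ is a join $G_1\vee G_2$; a join of two graphs each having at least two vertices contains $C_4$, so being also a tree forces one part to be a single vertex, whence $G\cong K_1\vee\overline{K_{n-1}}\cong S_n$.

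\textbf{Part (ii).} My plan is to reduce all of (ii) to the already-settled case $k=n-3$ of the full Brouwer's conjecture by Laplacian complementation. For $n=3$ there is nothing new, since then $k=2=n-1$ and the claim is part (iii) with $G(0,1)\cong K_3$; so assume $n\ge4$. Using Lemma~\ref{lm-1}(i) (and $s_{n-1}(G)=2e(G)$) one checks that
$$s_2(G)=2e(G)-n(n-3)+s_{n-3}(\overline G)$$
for every $n$-vertex graph. Substituting the bound $s_{n-3}(\overline G)\le e(\overline G)+\binom{n-2}{2}$ (Brouwer's conjecture for $k=n-3$, applied to $\overline G$), using $e(\overline G)=\binom{n}{2}-e(G)$ and the elementary identity $\binom{n}{2}+\binom{n-2}{2}-n(n-3)=3$, gives $s_2(G)\le e(G)+3$. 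For the equality case I would use the full $k=n-3$ statement: $s_2(G)=e(G)+3$ iff $s_{n-3}(\overline G)=e(\overline G)+\binom{n-2}{2}$, iff $\overline G$ is a threshold graph on $n$ vertices of clique number $n-2$, i.e.\ $\overline G\cong G_{n-3,r,s}$ with $r\ge1$. Since complements of threshold graphs are threshold, and the clique number of $G$ equals the independence number of $\overline G\cong G_{n-3,r,s}$, which is $n-(n-3)=3$ (the set $\overline{K_r}\cup\overline{K_s}$ being a maximum independent set), $G$ is itself threshold on $n$ vertices with clique number $3$, i.e.\ $G\cong G_{2,r',s'}$. Finally, connectedness of $G$ excludes an isolated vertex in the part $\overline{K_{s'}}$, so every vertex there has nonempty --- hence, by the nesting condition, singleton --- neighborhood inside $K_2$; these are precisely the graphs $G(s',n-2-s')$ of Figure~\ref{fig4}, each of the form $K_1\vee H$ with $H$ a nontrivial star together with isolated vertices, for which Lemma~\ref{lm-1}(iii) gives $\mu_1=n$ and $\mu_2=1+\mu_1(H)$, and a short computation as in part (i) yields $s_2=e(G)+3$, establishing the converse.

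\textbf{The main obstacle.} Everything past the reduction is bookkeeping; the genuine difficulty sits in the $k=n-3$ case of (the full) Brouwer's conjecture itself. What makes that case resist the off-the-shelf tools is that the extremal graphs $G(s,n-2-s)$ have $e(G)$ running over $\{n,n+1,\dots,2n-3\}$ --- exactly the band of edge densities in which neither Lemma~\ref{lm-5} nor Lemma~\ref{lm-6} is tight --- so a self-contained proof of the inequality would need a dedicated argument there. As a variant that relocates the difficulty, one could take the inequality $s_2(G)\le e(G)+3$ as known (the $k=2$ case of Brouwer's conjecture) and argue the extremal characterization by hand: first show $\mu_1(G)=n$ --- otherwise $\overline G$ is connected, and the displayed identity together with the $k=n-3$ equality case would make $\overline G$ a connected threshold graph, hence one with a dominating vertex, forcing $G$ to have an isolated vertex and contradicting connectedness --- and then, with $G=G_1\vee G_2$ in hand, settle the possibilities for $|G_1|$ using Lemma~\ref{lm-1}(iii): the cases $|G_1|=1$ and $|G_1|=2$ with $G_1\cong K_2$ reduce cleanly to part (i), while the remaining cases are strict. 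In either route the one delicate point is the bit of arithmetic that identifies all extremal graphs as members of the single family $G(s,n-2-s)$.
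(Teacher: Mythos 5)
The paper does not prove this lemma at all: it is imported verbatim from Li and Guo \cite{Li} (where the $k=2$ case is the resolution of the conjecture of Guan et al.), so there is no in-paper argument to compare against. Judged on its own terms, your parts (i) and (iii) are complete and correct: (iii) is just $s_{n-1}(G)=\operatorname{tr}L(G)=2e(G)$, and your equality analysis in (i) via $\mu_1(G)=n\Leftrightarrow\overline G$ disconnected is sound.

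Part (ii), however, has a genuine gap, and it is exactly the one you flag yourself. The identity you use is an instance of the exact duality
$$s_k(G)-e(G)-\binom{k+1}{2}\;=\;s_{n-1-k}(\overline G)-e(\overline G)-\binom{n-k}{2},$$
valid for every $n$-vertex graph and every $k$, which shows that the $k=2$ statement (over all graphs) and the $k=n-3$ statement (over all graphs) are \emph{the same assertion} up to complementation --- equality cases included. So deducing (ii) from ``the $k=n-3$ case of the full Brouwer conjecture'' is not a reduction to something easier or independently simpler; in \cite{Li} the logical flow runs in the opposite direction (the $k=2$ bound and its extremal graphs are established directly, and $k=n-3$ is then read off by this very complementation). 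Your proposal therefore never touches the substantive content of (ii): a direct bound on $\mu_1+\mu_2$ for connected graphs and the identification of the extremal family $G(s,n-2-s)$. Your ``variant'' route does not escape this either, since it again invokes the $k=n-3$ equality characterization to force $\mu_1(G)=n$, and leaves ``the remaining cases are strict'' unargued. The bookkeeping you do around the reduction (independence number of $G_{n-3,r,s}$ equals $3$, connectedness forcing the nested neighborhoods in $G_{2,r',s'}$ to be a fixed singleton, and the converse computation $s_2(G(s,n-2-s))=n+(n-2-s)+2=e(G)+3$) is all correct, but it is scaffolding around a missing core. If the intent is to cite \cite{Li} for the hard step, one should cite its $k=2$ result directly rather than its complementation-equivalent.
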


\begin{figure}
  \centering
  \includegraphics[width=3.8cm]{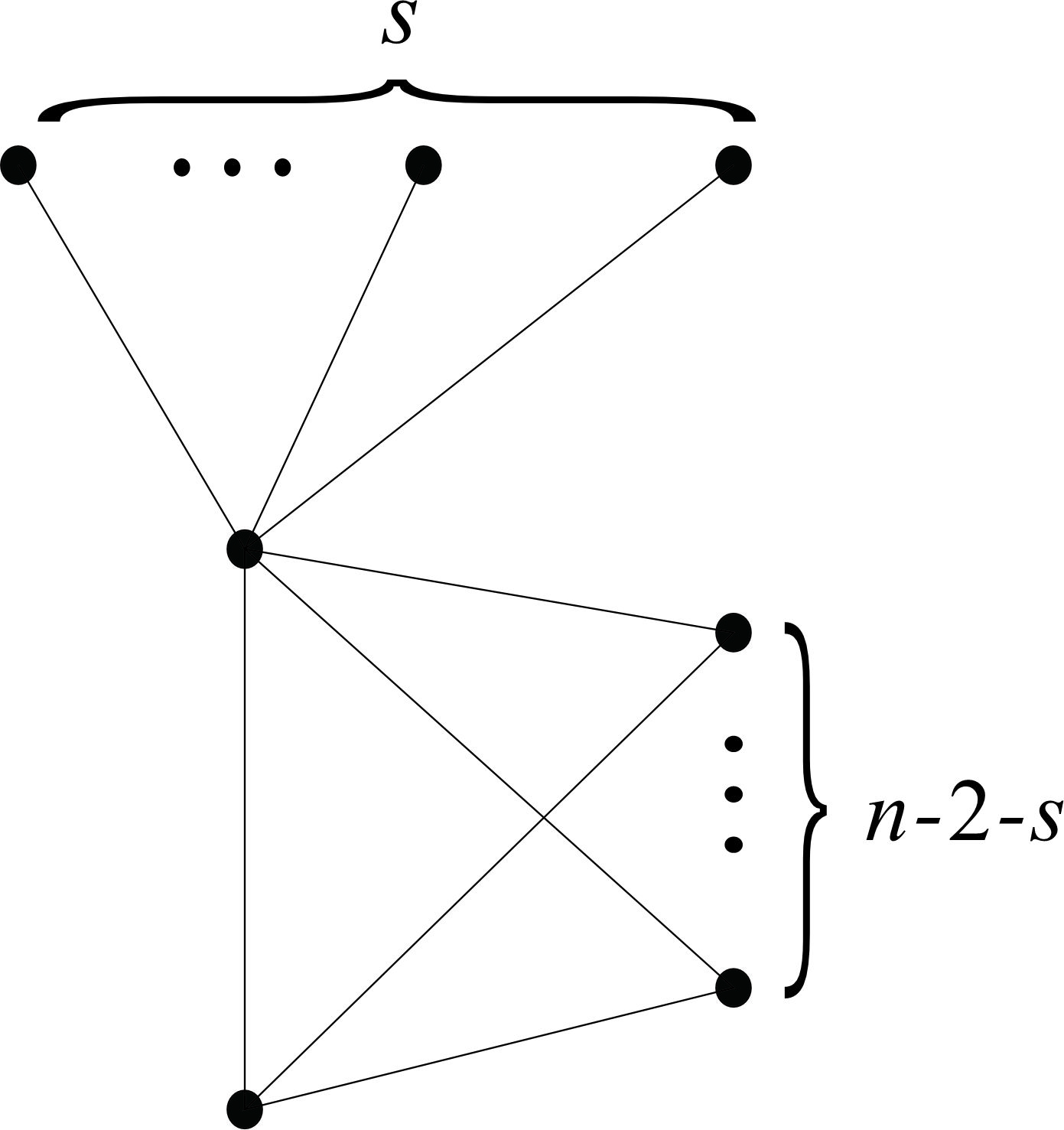}\\
  \caption{The graph $G(s,n-2-s), 0\leq s\leq n-3$.} \label{fig4}
\end{figure}

We also need a Nordhaus-Gaddum-type result for the sum of adjacency eigenvalues, which is due to Nikiforov \cite{Nikiforov}.

\begin{lemma}[see \cite{Nikiforov}] \label{lm-8}
	If $G$ is a graph on $n$ vertices with adjacency matrix $A(G)$, 
	then for $1\leq k\leq (n-1)/2$,
	$$\sum_{i=1}^k\big(|\lambda_{n-i+1}(A(G))|+|\lambda_{n-i+1}(A(\overline{G}))|\big)\leq \sqrt{2k}\bigg(\frac{n}{2}+k\bigg).$$
\end{lemma}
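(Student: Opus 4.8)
The plan is to exploit the identity $A(G)+A(\overline G)=J-I$ (with $J$ the all-ones matrix) and to reduce everything to a single ``deviation'' matrix. Write $A_1=A(G)$, $A_2=A(\overline G)$, and set $C:=\tfrac12(J-I)-A_1$, so that
$$A_1=\tfrac12(J-I)-C,\qquad A_2=\tfrac12(J-I)+C.$$
Using $(J-I)^2=(n-2)J+I$, $\operatorname{tr}(JA_1)=2e(G)$ and $\operatorname{tr}(A_1^2)=2e(G)$, a one-line computation gives the crucial facts
$$\operatorname{tr}(C)=0,\qquad \operatorname{tr}(C^2)=\tfrac14\,n(n-1),$$
so that $C$ carries all the dependence on $G$ while having a ``variance'' that does not depend on $e(G)$.

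Next I would rewrite $\sum_{i=1}^{k}|\lambda_{n-i+1}(A_j)|=\sum_{i=1}^{k}|\lambda_i(-A_j)|$ for $j=1,2$, and first settle the principal case in which the $k$ smallest adjacency eigenvalues of both $G$ and $\overline G$ are nonpositive; then these sums are exactly $\sum_{i=1}^{k}\lambda_i(-A_1)$ and $\sum_{i=1}^{k}\lambda_i(-A_2)$. Since $-A_1=\tfrac12(I-J)+C$, $-A_2=\tfrac12(I-J)-C$, and $\sum_{i=1}^{k}\lambda_i\big(\tfrac12(I-J)\big)=\tfrac k2$ for $k\le n-1$, applying Fan's inequality (Lemma \ref{lm-3}) to each of these sums yields
\begin{equation*}
\sum_{i=1}^{k}|\lambda_{n-i+1}(A_1)|+\sum_{i=1}^{k}|\lambda_{n-i+1}(A_2)|\ \le\ k+\Big(\sum_{i=1}^{k}\lambda_i(C)-\sum_{i=n-k+1}^{n}\lambda_i(C)\Big).
\end{equation*}
To bound the bracketed ``generalized spread'' I would use Cauchy--Schwarz: for any real $c$, and because $2k\le n-1$ forces the index sets $\{1,\dots,k\}$ and $\{n-k+1,\dots,n\}$ to be disjoint,
$$\sum_{i=1}^{k}(\lambda_i(C)-c)-\sum_{i=n-k+1}^{n}(\lambda_i(C)-c)\ \le\ \sqrt{k\,S_1}+\sqrt{k\,S_2}\ \le\ \sqrt{2k\sum_{i=1}^{n}(\lambda_i(C)-c)^2},$$
where $S_1,S_2$ are the two partial sums of squares; taking $c=\operatorname{tr}(C)/n=0$ turns the right-hand side into $\sqrt{2k\operatorname{tr}(C^2)}=\tfrac12\sqrt{2k\,n(n-1)}$. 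Hence in the principal case the left-hand side of the lemma is at most $k+\tfrac12\sqrt{2k\,n(n-1)}<k+\tfrac n2\sqrt{2k}\le k\sqrt{2k}+\tfrac n2\sqrt{2k}=\sqrt{2k}\,\big(\tfrac n2+k\big)$, where the last step uses $\sqrt{2k}\ge1$.

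It remains to deal with the case where some of the $k$ smallest eigenvalues of, say, $G$ are positive, so that $\sum_{i=1}^{k}|\lambda_i(-A_1)|$ strictly exceeds $\sum_{i=1}^{k}\lambda_i(-A_1)$ and the estimate above must be supplemented. The structural fact that makes this feasible is an inertia inequality: writing $n_+(\cdot)$, $n_-(\cdot)$ for the numbers of positive and negative eigenvalues, the positive index is subadditive, so from $A_2=(J-I)+(-A_1)$ one gets $n_+(A_2)\le 1+n_-(A_1)$; consequently at most one of $G,\overline G$ can have more than $n-k$ positive eigenvalues, and for the other the principal-case bound applies verbatim. For the ``bad'' graph — which then has at most $k-1$ negative eigenvalues — the sum $\sum_{i=1}^{k}|\lambda_i(-A_1)|$ splits into the (small) sum of absolute values of its negative eigenvalues, which equals the sum of its positive eigenvalues, plus its $k-n_-(A_1)$ smallest nonnegative eigenvalues, and each piece can be estimated in terms of $e(G)$, the number $n_-(A_1)$, and $\operatorname{tr}(C^2)$. \emph{The main obstacle} is to make these last estimates close up inside $\sqrt{2k}\,(\tfrac n2+k)$, rather than losing the spurious constant factor (of about $2$) that the crude trace-and-Frobenius-norm bounds produce; overcoming it requires using the finer structure of $C$, or arguing directly through the variational characterisation of $\sum_{i=1}^{k}\lambda_i$ over well-chosen $k$-dimensional subspaces, instead of relying only on its first two moments.
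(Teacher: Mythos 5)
First, a point of reference: the paper does not prove this statement at all --- Lemma \ref{lm-8} is imported verbatim from Nikiforov and Yuan \cite{Nikiforov}, so there is no in-paper proof to compare yours against; your attempt has to stand on its own.

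On its own terms, your argument is incomplete. The ``principal case'' is handled correctly and rather cleanly: the decomposition $A(G)=\tfrac12(J-I)-C$, $A(\overline G)=\tfrac12(J-I)+C$ with $\operatorname{tr}(C)=0$ and $\operatorname{tr}(C^2)=\tfrac14 n(n-1)$ is right, Fan's inequality gives the stated reduction to the generalized spread of $C$, the Cauchy--Schwarz step legitimately uses the disjointness of the index sets (which is exactly where $k\leq (n-1)/2$ enters), and the final numerical comparison $k+\tfrac12\sqrt{2kn(n-1)}<\sqrt{2k}\,(\tfrac n2+k)$ checks out. The inertia observation $n_+(A(\overline G))\le 1+n_-(A(G))$ is also correct and does show that at most one of $G,\overline G$ can be ``bad.'' But the bad case is not vacuous --- for instance the Kneser graph $K(6,2)$ on $n=15$ vertices has adjacency spectrum $6,\,1^{(9)},\,(-3)^{(5)}$, so for $k=6\le 7$ its sixth-smallest eigenvalue is $+1$ and your principal-case identity $\sum_{i=1}^k|\lambda_{n-i+1}|=\sum_{i=1}^k\lambda_i(-A)$ fails --- and you explicitly concede that you cannot close the resulting estimate inside $\sqrt{2k}\,(\tfrac n2+k)$ without losing roughly a factor of $2$. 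That is a genuine missing step, not a routine verification: the entire difficulty of the lemma for graphs with few negative eigenvalues is concentrated there, so as written the proposal proves only a special case and should not be accepted as a proof of Lemma \ref{lm-8}.
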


\section{Main results}

In this section, we present more evidence for the full Brouwer's conjecture by proving that several classes of graphs satisfy it,
and give partial solutions to a Nordhaus-Gaddum version of Brouwer's conjecture (i.e., Conjecture \ref{conj-4}).

\subsection{Spanning subgraphs of complete split graphs}

For a start, we consider complete split graphs.
Suppose $G$ is a complete split graph on $n$ vertices with clique number $k+1$.
Then, from the definition of complete split graphs it follows that
$G$ is exactly the join of its vertex-induced subgraphs $G[V_1]$ and $G[V_2]$, that is, $G=G[V_1]\vee G[V_2]$,
where $V_1$ is the clique of size $k$ and $V_2$ is the independent set of size $n-k$.
This yields that $e(G)=\binom{k}{2}+k(n-k)$. Moreover,
by Lemma \ref{lm-1} (iii) and the fact that $\mu(K_k)=(k,\cdots,k,0)$, we obtain
$$\mu(G)=(\underbrace{n,\cdots,n}_k,\underbrace{k,\cdots,k}_{n-k-1},0).$$
Now, a simple calculation shows that
\begin{eqnarray*}
&&s_k(G)=kn=e(G)+\binom{k+1}{2},\\
&&s_t(G)=tn<e(G)+\binom{t+1}{2}\,\, \textrm{for}\,\, 1\leq t<k,\\
&&s_t(G)=kn+(t-k)k<e(G)+\binom{t+1}{2}\,\, \textrm{for}\,\, k<t\leq n-1,
\end{eqnarray*}
from which we can conclude the following.

\begin{prop}\label{prop-1}	
The full Brouwer's conjecture is true for complete split graphs.
\end{prop}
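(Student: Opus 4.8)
The plan is to leverage the explicit Laplacian spectrum of a complete split graph, which is essentially already recorded in the paragraph preceding the statement. If $G$ is a complete split graph on $n$ vertices with clique number $k+1$, then $G = G[V_1] \vee G[V_2]$ with $|V_1| = k$ and $|V_2| = n-k$, so $e(G) = \binom{k}{2} + k(n-k)$, and Lemma~\ref{lm-1}(iii) together with $\mu(K_k) = (k, \dots, k, 0)$ gives
$$\mu(G) = (\underbrace{n, \dots, n}_{k}, \underbrace{k, \dots, k}_{n-k-1}, 0).$$
From here a one-line computation yields $e(G) + \binom{k+1}{2} = kn = s_k(G)$, and the three displayed identities already establish $s_t(G) \le e(G) + \binom{t+1}{2}$ for all $t \in \{1, \dots, n-1\}$, so the inequality half of the full conjecture is in hand.

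For the equality characterization I would determine exactly when these inequalities are tight. Using $e(G) + \binom{t+1}{2} = kn - \tfrac12(k-t)(k+t+1)$ for $t \le k$ and comparing with $s_t(G) = tn$, the gap is positive whenever $k + t + 1 < 2n$, which holds for all $1 \le t < k$ since then $t \le k-1 \le n-2$. Similarly, using $e(G) + \binom{t+1}{2} = kn + \tfrac12(t-k)(t+k+1)$ for $t \ge k$ and comparing with $s_t(G) = kn + (t-k)k$, the gap is positive whenever $k < t+1$, i.e. for all $k < t \le n-1$. Hence $s_t(G) = e(G) + \binom{t+1}{2}$ precisely when $t = k$.

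Finally I would match this with the conjectured extremal family via the concise form, Conjecture~\ref{conj-3}: a complete split graph is a threshold graph, and it has clique number $t+1$ exactly when $t = k$. So equality in the $t$-th Brouwer inequality holds if and only if $t = k$, and in that unique case $G$ is indeed a threshold graph on $n$ vertices with clique number $t+1$, as required. I do not anticipate any genuine obstacle here; the proposition reduces to the spectral computation furnished by Lemma~\ref{lm-1}(iii) plus the two elementary strict-inequality checks above, both of which are uniform in $n$ and $k$.
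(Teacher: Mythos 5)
Your proposal is correct and follows essentially the same route as the paper: the paper's proof of this proposition is precisely the paragraph preceding it, namely the join decomposition $G = G[V_1]\vee G[V_2]$, the explicit spectrum $\mu(G)=(n,\dots,n,k,\dots,k,0)$ from Lemma~\ref{lm-1}(iii), and the three displayed comparisons of $s_t(G)$ with $e(G)+\binom{t+1}{2}$. Your only addition is to spell out the strictness checks ($2n>k+t+1$ and $k<t+1$) and the matching of the unique equality case $t=k$ with the threshold/clique-number characterization, which the paper leaves implicit in "from which we can conclude the following."
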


As we see, complete split graphs are so restrictive.
To take a step forward, one might naturally ask whether the full Brouwer's conjecture holds for any spanning subgraph of complete split graphs.
We here do not present a complete answer to this question,
but find two kinds of such spanning subgraphs satisfying the full Brouwer's conjecture:
one is the split graph, which can be obtained from a complete split graph by removing some edges between the clique and independent set;
the other is the graph obtained from a complete split graph by removing some edges whose endpoints are both in the clique.

\begin{theorem}\label{thm-split}	
The full Brouwer's conjecture is true for split graphs.
\end{theorem}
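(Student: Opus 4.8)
The plan is to prove Theorem~\ref{thm-split} by combining Grone--Merris--Bai (Theorem~\ref{thm-Bai}) with a careful analysis of the conjugate degree sequence of a split graph, together with the interlacing and majorization tools from Section~2. Let $G$ be a split graph on $n$ vertices whose vertex set partitions into a clique $V_1$ and an independent set $V_2$. The inequality part $s_k(G)\le m+\binom{k+1}{2}$ is already known from the validity of Brouwer's conjecture for split graphs \cite{Mayank}, so the real content is the equality characterization: if $s_k(G)=m+\binom{k+1}{2}$ for some $k\in\{1,\dots,n-1\}$, then $G$ must be a threshold graph with clique number $k+1$ (and conversely, which is already covered by Proposition~\ref{prop-1} type computations for threshold graphs). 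So I would focus entirely on the forward implication.

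First I would set up the comparison chain
\[
s_k(G)\;\le\;\sum_{i=1}^k d_i^*(G)\;\le\;m+\binom{k+1}{2},
\]
where the first inequality is Grone--Merris--Bai and the second is the combinatorial inequality $\sum_{i=1}^k d_i^*\le m+\binom{k+1}{2}$ that underlies the split-graph case of Brouwer's conjecture. Equality in $s_k(G)=m+\binom{k+1}{2}$ forces equality in \emph{both} steps. Equality in the first step, by the equality clause of Theorem~\ref{thm-Bai}, already gives that $G$ is a threshold graph. Then I would use the second equality, $\sum_{i=1}^k d_i^*=m+\binom{k+1}{2}$, to pin down the clique number. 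The key observation is that for a threshold graph the Ferrers--Sylvester diagram is self-conjugate above/below the diagonal (as recalled in Section~2 from \cite{Merris2}), so $m=\sum d_i$ can be read off the diagram, and $\sum_{i=1}^k d_i^*$ counts the boxes in the first $k$ columns; a direct box-counting argument shows $\sum_{i=1}^k d_i^* = m+\binom{k+1}{2}$ holds precisely when the trace $T$ (equivalently, since $G$ is threshold, $\omega(G)-1$) equals $k$, i.e.\ $\omega(G)=k+1$. Combining, $G$ is threshold with clique number $k+1$, as desired.

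An alternative, perhaps cleaner route that avoids invoking the split-graph case of Brouwer's conjecture as a black box: since a split graph $G$ is a spanning subgraph of the complete split graph $G^+$ with the same clique $V_1$ of size, say, $|V_1|=\omega(G)$, repeatedly apply the edge-interlacing Lemma~\ref{lm-1}(iv) to pass from $G^+$ down to $G$, tracking how each deleted edge (necessarily between $V_1$ and $V_2$) affects $s_k$. Using the explicit spectrum of $G^+$ computed just before Proposition~\ref{prop-1}, together with the fact that deleting an edge strictly decreases the Laplacian spectral sum unless the edge lies ``outside'' the relevant eigenspace, one can track the equality case edge-by-edge. However, I suspect this edge-peeling bookkeeping is messier than the conjugate-degree-sequence argument, because controlling when $s_k$ stays constant under edge deletion requires understanding the eigenvectors, so I would keep it only as a backup.

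The main obstacle I anticipate is the equality analysis in the purely combinatorial step $\sum_{i=1}^k d_i^*\le m+\binom{k+1}{2}$: one must show not merely that equality implies $G$ is threshold (that already comes free from Theorem~\ref{thm-Bai}) but that it simultaneously forces $\omega(G)=k+1$, and not $\omega(G)<k+1$ or $\omega(G)>k+1$. For a threshold graph, writing out $\sum_{i=1}^k d_i^*$ via the self-conjugate diagram and comparing with $m+\binom{k+1}{2}$ should reduce to the inequality $\binom{T+1}{2}\ge \binom{k+1}{2}-(\text{something nonnegative})$ with equality iff $T=k$; making this bookkeeping precise and watertight---in particular handling the boundary cases $k<T$ and $k>T$ separately, and the degenerate cases $r$ or $s$ equal to extreme values---is where the care is needed. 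Once that lemma is in hand, the theorem follows in a few lines.
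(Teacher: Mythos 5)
Your proposal follows essentially the same route as the paper's own proof: both combine Grone--Merris--Bai with the combinatorial inequality $\sum_{i=1}^{k} d_i^{*}\leq e(G)+\binom{k+1}{2}$ for split graphs from \cite{Mayank}, and both settle the equality case by box-counting in the self-conjugate Ferrers--Sylvester diagram of a threshold graph to show that $\sum_{i=1}^{k} d_i^{*}=e(G)+\binom{k+1}{2}$ holds precisely when $k=T=\omega(G)-1$. The only difference is organizational: the paper first splits into threshold versus non-threshold $G$ and then treats the cases $k=T$, $k<T$, $k>T$, whereas you extract thresholdness from tightness of the Grone--Merris--Bai step --- the underlying content (and the reliance on the equality clause of Theorem~\ref{thm-Bai}) is the same.
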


\begin{proof}
Let $G$ be a split graph on $n$ vertices.
For $1\leq k\leq n-1$, we consider the following function (in variable $k$)
$$f_{k}(G):=e(G)+\binom{k+1}{2}-\sum_{i=1}^{k} d_{i}^{*}.$$
It was shown in \cite{Mayank} that $f_{k}(G)\geq 0$ with equality if $k=T$ (i.e., the trace of the degree sequence of $G$).
This means that the Grone-Merris's bound on $s_k(G)$ is better than the Brouwer's bound when $G$ is a split graph, that is,
\begin{eqnarray}
\sum_{i=1}^{k} d_{i}^{*}\leq e(G)+\binom{k+1}{2},\, 1\leq k\leq n-1. \label{eq-3-1-1}
\end{eqnarray}

If $G$ is not a threshold graph, then combining Grone-Merris-Bai theorem and (\ref{eq-3-1-1}), we have
$$s_k(G)<\sum_{i=1}^{k} d_{i}^{*}\leq e(G)+\binom{k+1}{2},\, 1\leq k\leq n-1,\,\, \textrm{as desired.}$$

If $G$ is a threshold graph, from the constructive characterization of threshold graphs we see that
all dominating vertices plus the initial isolated vertex form a maximum clique of $G$,
which implies that the number $N$ of dominating vertices in $G$ is exactly the clique number of $G$ minus 1, i.e., $N=\omega(G)-1$.
On the other hand, observing that every dominating vertex of $G$ has degree at least $N$,
whereas every isolated vertex of $G$ has degree at most $N$ with the degree of the initial isolated vertex being exactly $N$,
we get $$d_1\geq\cdots\geq d_N\geq N=d_{N+1}\geq\cdots \geq d_n,$$ and hence, $T=N$.
Consequently, we have $T=\omega(G)-1$.
We next consider the following three cases: 

\textbf{Case 1.} $k=T=\omega(G)-1$.

In this case, $G$ has clique number $k+1$.
Let us now consider the Ferrers-Sylvester diagram of $G$, which, as mentioned in previous section,
has a certain symmetry, that is,
the part on and above the diagonal boxes is exactly the transpose of the part below the diagonal.
This would allow us to partition the diagram into three parts: $X$, $Y$ and $Z$; see Figure \ref{fig5} for an illustration.
Denote by $n_X$, $n_Y$ and $n_Z$ the number of boxes in Parts $X$, $Y$ and $Z$, respectively.
By symmetry we get
\begin{eqnarray*}
n_Y=n_Z\,\, \text{and}\,\, n_X=T(T+1).
\end{eqnarray*}
This, as well as the fact that $2e(G)=\sum_{i=1}^n d_i=n_X+n_Y+n_Z$, yields that
\begin{eqnarray*}
n_Y=n_Z=e(G)-\frac{T(T+1)}{2}.
\end{eqnarray*}
Consequently, from Grone-Merris-Bai theorem it follows directly that
$$s_k(G)=s_T(G)=\sum_{i=1}^T d_i^*=n_X+n_Y=e(G)+\frac{T(T+1)}{2}=e(G)+\binom{k+1}{2}.$$

\begin{figure}
  \centering
  \includegraphics[width=4.6cm]{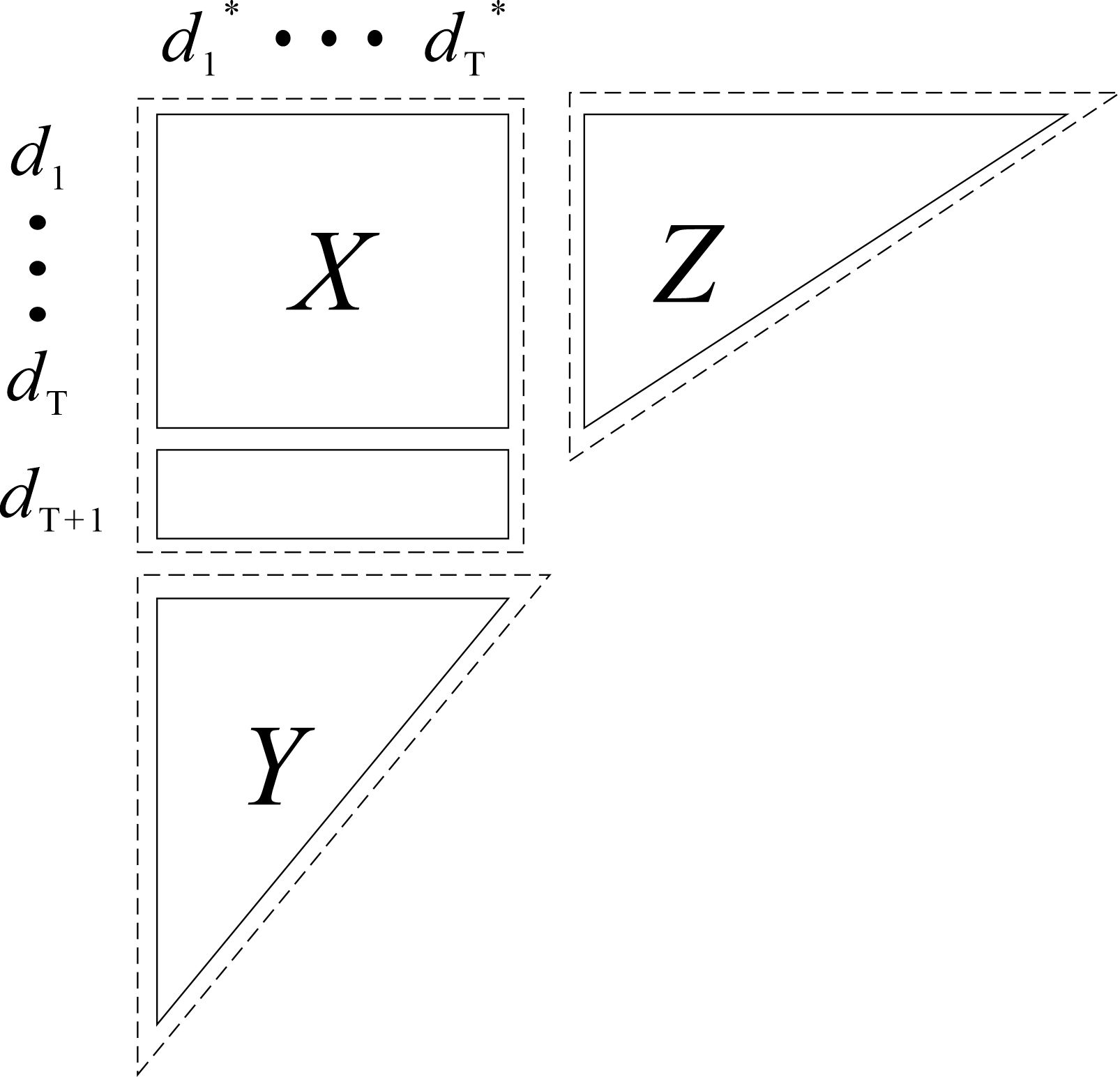}
  \caption{Parts $X$, $Y$ and $Z$ in the Ferrers-Sylvester diagram of a threshold graph $G$.}\label{fig5}
\end{figure}

\textbf{Case 2.} $1\leq k\leq T-1=\omega(G)-2$.

From the Ferrers-Sylvester diagram of $G$ we see that $d_T^*\geq T+1$.
Thus, again by Grone-Merris-Bai theorem, we have
\begin{eqnarray*}
s_k(G)=\sum_{i=1}^k d_i^*&=&\sum_{i=1}^T d_i^*-\sum_{i=k+1}^T d_i^*\\
&\leq& e(G)+\frac{T(T+1)}{2}-(T-k)(T+1) \\
&=& e(G)+\binom{k+1}{2}-\binom{T-k+1}{2} \\
&<& e(G)+\binom{k+1}{2}.
\end{eqnarray*}

\textbf{Case 3.} $\omega(G)=T+1\leq k\leq n-1$.

As Case 2, by the Ferrers-Sylvester diagram of $G$ we have $d_{T+1}^*\leq T$ and,
\begin{eqnarray*}
s_k(G)=\sum_{i=1}^k d_i^*&=&\sum_{i=1}^T d_i^*+\sum_{i=T+1}^k d_i^*\\
&\leq& e(G)+\frac{T(T+1)}{2}+(k-T)T \\
&=& e(G)+\binom{k+1}{2}-\binom{k-T+1}{2} \\
&<& e(G)+\binom{k+1}{2}.
\end{eqnarray*}

Now, by combining the above discussion, we can conclude that for a split graph $G$ on $n$ vertices and for $1\leq k\leq n-1$,
$$s_k(G)\leq e(G)+\binom{k+1}{2},$$
with equality if and only if $G$ is a threshold graph on $n$ vertices with clique number $k+1$.
This completes the proof.
\end{proof}

We remark that the assertion proved in Case 1 has appeared in different but equivalent form in other places; see e.g., \cite{Abebe,Das,Li}.
Here, for completeness, we present a proof of it (as well as Case 2 and Case 3),
where the idea of our proof comes from \cite{Das}; see Corollary \ref{thm-4-3} below for an alternative proof.

\begin{theorem}\label{thm-3-1-1}
	Let $G$ be an arbitrary graph on $p$ vertices.
	If $p\leq q$, then for $1\leq k\leq p+q-1$,
	$$s_{k}(G\vee qK_1)\leq e(G\vee qK_1)+\binom{k+1}{2},$$
	with equality if and only if $k=p$ and $G\cong K_{p}$ (in this case, $G\vee qK_1$ is a threshold graph with clique number $k+1$).
\end{theorem}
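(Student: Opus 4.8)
The plan is to use Lemma~\ref{lm-1}(iii) to write down the Laplacian spectrum of $G\vee qK_1$ explicitly, extract a closed formula for $s_k(G\vee qK_1)$ in each relevant range of $k$, and then reduce the asserted inequality to two elementary facts about $G$ itself: $e(G)\le\binom{p}{2}$ and $s_{k-1}(G)\le s_{p-1}(G)=2e(G)$. To begin, I would record that $e(G\vee qK_1)=e(G)+pq$ and, since $\mu(qK_1)=(0,\dots,0)$, that Lemma~\ref{lm-1}(iii) gives the Laplacian spectrum of $G\vee qK_1$ as the multiset
$$\{p+q\}\cup\{\underbrace{p,\dots,p}_{q-1}\}\cup\{q+\mu_i(G):1\le i\le p-1\}\cup\{0\}.$$
Since $0\le\mu_i(G)\le p$ for every $i$ by Lemma~\ref{lm-1}(ii), and $p\le q$ by hypothesis, we have $p\le q\le q+\mu_{p-1}(G)\le\dots\le q+\mu_1(G)\le p+q$, so the non-increasing rearrangement of these $p+q$ values is $\mu_1=p+q$, then $\mu_{i+1}=q+\mu_i(G)$ for $1\le i\le p-1$, then $\mu_j=p$ for $p+1\le j\le p+q-1$, and finally $\mu_{p+q}=0$.

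Reading off partial sums from this ordering, for $2\le k\le p$ one has $s_k(G\vee qK_1)=p+kq+s_{k-1}(G)$, while for $p\le k\le p+q-1$ one has $s_k(G\vee qK_1)=p+pq+2e(G)+(k-p)p$; and the case $k=1$ reduces to $p+q\le e(G)+pq+1$, i.e.\ $(p-1)(q-1)+e(G)\ge0$, which is clear. For $2\le k\le p$ the target inequality $s_k(G\vee qK_1)\le e(G)+pq+\binom{k+1}{2}$ is equivalent to
$$s_{k-1}(G)-e(G)\le q(p-k)+\binom{k+1}{2}-p.$$
Here the left-hand side is at most $2e(G)-e(G)=e(G)\le\binom{p}{2}$, while, using $q\ge p$ and $p-k\ge0$, the right-hand side is at least $p(p-k)+\binom{k+1}{2}-p=\binom{p}{2}+\tfrac{1}{2}(p-k)(p-k-1)\ge\binom{p}{2}$ (the last identity being a direct computation); chaining the two bounds gives the inequality. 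For $p<k\le p+q-1$ the same substitution reduces the inequality to $\binom{p}{2}\le\binom{k+1}{2}-p-(k-p)p$, which rearranges to $\tfrac{1}{2}(k-p)(k-p+1)\ge0$ and is in fact strict since $k-p\ge1$.

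The main work, and the point where I expect to need the most care, is the equality discussion. In the range $2\le k\le p$, equality would force the left-hand side above to equal $\binom{p}{2}$, hence $e(G)=\binom{p}{2}$ (so $G\cong K_p$) together with $s_{k-1}(G)=2e(G)$, i.e.\ $\mu_k(G)=\dots=\mu_{p-1}(G)=0$; but $K_p$ has $\mu_i=p>0$ for all $i\le p-1$, so the latter can hold only when it is vacuous, namely $k=p$. Conversely, for $k=p$ and $G\cong K_p$ the graph $G\vee qK_1$ is a complete split graph with clique number $p+1$, and equality holds (by direct substitution, or by Proposition~\ref{prop-1}). For $k=1$ equality occurs precisely when $p=1$ and $G\cong K_1$, which is again the case $k=p$, $G\cong K_p$; and for $p<k\le p+q-1$ the inequality is always strict. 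Collecting the cases then establishes the inequality for all $1\le k\le p+q-1$ together with the claimed characterisation of equality. The inequalities themselves being routine once the explicit spectrum is in hand, essentially all the effort goes into this equality bookkeeping and into checking that the equality conditions of the several estimates used are simultaneously satisfiable only in the stated case.
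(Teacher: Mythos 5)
Your proposal is correct and follows essentially the same route as the paper: read off the spectrum of $G\vee qK_1$ from Lemma \ref{lm-1}(iii) using $\mu_1(G)\leq p\leq q$, split on $k\leq p$ versus $k\geq p$, and reduce everything to $s_{k-1}(G)\leq 2e(G)$ and $e(G)\leq\binom{p}{2}$. The only (welcome) difference is organizational: where the paper's Case 1 branches on whether $e(G)\leq\binom{k+1}{2}$ and handles the two subcases by different estimates, you replace both with the single chain $s_{k-1}(G)-e(G)\leq e(G)\leq\binom{p}{2}\leq q(p-k)+\binom{k+1}{2}-p$, and your equality analysis (forcing $\mu_k(G)=\cdots=\mu_{p-1}(G)=0$, impossible for $K_p$ unless $k=p$) is a slightly cleaner version of the paper's contradiction via $s_{p-2}(G)\leq(p-2)\mu_1(G)<(p-1)p$.
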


\begin{proof}
	Observe first that
	\begin{eqnarray}
		e(G\vee qK_1)=pq+e(G). \label{eq-3-1-2}
	\end{eqnarray}
	Moreover, by Lemma \ref{lm-1} (ii) we have $\mu_1(G)\leq p$.
    Now, bearing in mind that $p\leq q$ and using Lemma \ref{lm-1} (iii) we obtain
	$$\mu(G\vee qK_1)=(p+q, q+\mu_1(G), \cdots, q+\mu_{p-1}(G), p, \cdots, p, 0).$$
	We next consider the following two cases:
	
	\textbf{Case 1.} $1\leq k\leq p-1$.
	
	If $e(G)\leq \binom{k+1}{2}$, then we have
	\begin{eqnarray}
        s_{k-1}(G)\leq 2e(G)\leq e(G)+\binom{k+1}{2},\label{eq-3-1-3}
    \end{eqnarray}
	which, together with (\ref{eq-3-1-2}) and the assumption $p\leq q$, yields that
	\begin{eqnarray}
		s_{k}(G\vee qK_1)&=&p+kq+s_{k-1}(G)\nonumber\\
		&\leq & (k+1)q+e(G)+\binom{k+1}{2} \label{eq-3-1-4} \\
		&\leq & e(G\vee qK_1)+\binom{k+1}{2}.\nonumber
	\end{eqnarray}
    Furthermore, if $s_{k}(G\vee qK_1)=e(G\vee qK_1)+\binom{k+1}{2}$, then equalities hold in all the inequalities in (\ref{eq-3-1-3}) and (\ref{eq-3-1-4}),
    which yields that $p=k+1$, $e(G)=\binom{p}{2}$, and $s_{p-2}(G)=(p-1)p$.
    However, $s_{p-2}(G)\leq (p-2)\mu_1(G)<(p-1)p$ makes a contradiction.
    We thus conclude that for $1\leq k\leq p-1$ and $e(G)\leq \binom{k+1}{2}$,
	$$s_{k}(G\vee qK_1)<e(G\vee qK_1)+\binom{k+1}{2}.$$
	
	If $e(G)>\binom{k+1}{2}$, noting that $1\leq k<p\leq q$, we obtain
	\begin{eqnarray*}
		s_{k}(G\vee qK_1)&\leq&k(p+q)\\
        &=& pq+\binom{k+1}{2}+\binom{k+1}{2}-\big(pq-k(p+q)+k^2+k\big)\\
		&<& pq+e(G)+\binom{k+1}{2}-(p-k)(q-k)-k\\
		&<& e(G\vee qK_1)+\binom{k+1}{2}.
	\end{eqnarray*}
	
	\textbf{Case 2.} $p\leq k\leq p+q-1$.
	
	Noting that $s_{p-1}(G)=2e(G)$ and $e(G)\leq \binom{p}{2}$, we have
	\begin{eqnarray}
		s_{k}(G\vee qK_1)&=& pq+(k-p+1)p+2e(G) \nonumber \\
		&\leq& e(G\vee qK_1)+\binom{k+1}{2}+(k-p+1)p+\binom{p}{2}-\binom{k+1}{2} \nonumber \\
		&=& e(G\vee qK_1)+\binom{k+1}{2}-\frac{(k-p+1)(k-p)}{2} \nonumber \\
		&\leq& e(G\vee qK_1)+\binom{k+1}{2}. \label{eq-3-1-5}
	\end{eqnarray}

	Furthermore, it is easy to check that if $s_{k}(G\vee qK_1)=e(G\vee qK_1)+\binom{k+1}{2}$,
	then equalities hold in all the inequalities in (\ref{eq-3-1-5}),
    which yields that $k=p$ and $e(G)=\binom{p}{2}$, that is, $k=p$ and $G\cong K_p$.

    Conversely, if $k=p$ and $G\cong K_p$, then $G\vee qK_1$ is a threshold graph on $p+q$ vertices with clique number $p+1=k+1$.
    From Theorem \ref{thm-split} it follows that $s_{k}(G\vee qK_1)=e(G\vee qK_1)+\binom{k+1}{2}$.
	
	This completes the proof of Theorem \ref{thm-3-1-1}.
\end{proof}

\subsection{$c$-cyclic graphs with $c\in\{0,1,2\}$}

Recall that Brouwer's conjecture has been proven to be true for $c$-cyclic graphs with $c\in\{0,1,2\}$.
In this subsection, we do the same thing for the full Brouwer's conjecture.
To this end, we first establish an auxiliary result,
which refines Lemmas 2.6 and 2.7 in \cite{Wang} and Lemma 4.7 in \cite{Du}.

\begin{lemma}\label{lm-3-2-1}
Let $e_1,e_2,\cdots,e_t$ be some edges in a connected graph $G$ of order $n$ such that $G-\{e_1,e_2,\cdots,e_t\}:=G_1\cup G_2$,
where $G_1$ and $G_2$ are two vertex-disjoint graphs of order $n_1\geq 3$ and $n_2\geq 2$, respectively.
Let $k$ be an integer with $3\leq k\leq n-2$ and $n^\prime_i=\min\{n_i-1,k\}$.
Suppose for any $\ell_i\in\{1,2,\cdots,n_i^\prime\}$,
$s_{\ell_i}(G_i)\leq e(G_i)+\binom{\ell_i+1}{2},\, i=1,2$.
Then $s_k(G)<e(G)+\binom{k+1}{2}$ if one of the following conditions holds:

(i) $s_k(G_1\cup G_2)=s_k(G_1)$ or $s_k(G_2)$, $e(G_1)\geq t$ and $e(G_2)\geq t$;

(ii) $s_k(G_1\cup G_2)=s_{k_1}(G_1)+s_{k_2}(G_2)$ and $t\leq k-1$, where $k_1+k_2=k$ and $k_1k_2\neq 0$;

(iii) $t\leq k-1$, $e(G_1)\geq t$ and $e(G_2)\geq t$;

(iv) $t=k$, $G_1\cong S_{n_1}$ and $G_2\cong S_{n_2}$, $T_{n_2}^2$ ($n_2\geq 6$), or $T_{n_2}^3$ ($n_2\geq 7$);
in this case, $G$ is a $(t-1)$-cyclic graph.
\end{lemma}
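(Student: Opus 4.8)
The plan is to peel off the $t$ deleted edges with Fan's inequality, reduce $s_k(G_1\cup G_2)$ to the two pieces, invoke the hypotheses on $G_1$ and $G_2$, and then extract the strict inequality case by case. Write $H$ for the graph on $V(G)$ with edge set $\{e_1,\dots,e_t\}$, so that $L(G)=L(G_1\cup G_2)+L(H)$ and $e(H)=t$. Every non-trivial component of $H$ on $m$ vertices has at least $m-1$ edges, so $\operatorname{rank}L(H)\le t$ and hence $s_k(H)\le 2e(H)=2t$. By Lemma~\ref{lm-3}, $s_k(G)\le s_k(G_1\cup G_2)+s_k(H)\le s_k(G_1\cup G_2)+2t$.

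Next, write $s_k(G_1\cup G_2)=s_{k_1}(G_1)+s_{k_2}(G_2)$ for an optimal pair $k_1+k_2=k$, $k_i\ge 0$; optimality gives $\mu_{k_1+1}(G_1)\le\mu_{k_2}(G_2)$ and $\mu_{k_2+1}(G_2)\le\mu_{k_1}(G_1)$ whenever the indices are legitimate. Since $G_i$ has at most $n_i-1$ nonzero Laplacian eigenvalues, one may replace $k_i$ by $\min\{k_i,n_i-1\}\le n_i'$; the hypothesis (together with the trivial bounds $e(G_i)\le\binom{n_i}{2}$ and $s_0(G_i)=0$ covering the degenerate ranges) then yields $s_{k_i}(G_i)\le e(G_i)+\binom{k_i+1}{2}$ for $i=1,2$. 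Using the identity $\binom{k_1+1}{2}+\binom{k_2+1}{2}=\binom{k+1}{2}-k_1k_2$ together with $e(G_1)+e(G_2)=e(G)-t$, we obtain
\[
s_k(G)\ \le\ e(G)+\binom{k+1}{2}+\bigl(t-k_1k_2\bigr).
\]

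Now the four cases. If the optimal split is one-sided, say $k_2=0$, then $s_k(G_1\cup G_2)=s_k(G_1)\le e(G_1)+\binom{k+1}{2}$, and the bound above sharpens to $s_k(G)\le e(G)+\binom{k+1}{2}+(t-e(G_2))$; this handles case~(i) and the one-sided part of (iii), since $e(G_2)\ge t$. If $k_1k_2\ne 0$, then $k_1k_2\ge k-1\ge t$, so $t-k_1k_2\le 0$; this handles case~(ii) and the remaining part of (iii). In case~(iv) the term $t-k_1k_2$ need no longer be negative, so extra slack must be located from the explicit structure: $\mu_1(S_{n_1})=n_1\ge 3$ together with $\mu_2(G_2)<3$ (which holds since $\mu_2(S_m)=1$ and, by Lemma~\ref{lm-2}, $\mu_2(T_{n_2}^2),\mu_2(T_{n_2}^3)<2.7$) forces $k_1\ge 1$; a star satisfies the stronger bound $s_j(S_m)\le e(S_m)+j$; and for $G_2\cong T_{n_2}^2$ or $T_{n_2}^3$ one estimates $s_{k_2}(G_2)=\mu_1(G_2)+\sum_{i=2}^{k_2}\mu_i(G_2)$ via $\mu_1(G_2)\le n_2$ (Lemma~\ref{lm-1}(ii)) and $\mu_2(G_2)<2.7$ (Lemma~\ref{lm-2}). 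The edge count $e(G)=e(G_1)+e(G_2)+t=n+t-2$ identifies $G$ as a $(t-1)$-cyclic graph, and the accumulated deficits make the inequality strict.

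The step I expect to be the main obstacle is not the inequality but its strictness. For equality in the estimate above one needs $s_k(H)=2t$, \emph{both} inequalities $s_{k_i}(G_i)\le e(G_i)+\binom{k_i+1}{2}$ to be equalities, Fan's inequality to be tight, and the relevant boundary relation ($k_1k_2=t=k-1$, or $e(G_i)=t$) to hold. I would rule this out in two ways. In the boundary configurations, the optimality relations and $\mu_1\le n$ force one of $G_1,G_2$ to have fewer than $k_i$ nonzero Laplacian eigenvalues; then $s_{k_i}(G_i)=2e(G_i)\le e(G_i)+\binom{\operatorname{rank}L(G_i)+1}{2}<e(G_i)+\binom{k_i+1}{2}$, so that hypothesis inequality is strict. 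When no such reduction is available, Fan's inequality itself must be strict: the eigenvector for the largest eigenvalue of a connected piece of $G_1\cup G_2$ on at least two vertices is supported on all of that piece, while every vector in the top-$t$ eigenspace of $L(H)$ is supported on the at most $2t$ endpoints of $e_1,\dots,e_t$; with $n_i\ge 3$ these subspaces cannot coincide, so $L(G_1\cup G_2)$ and $L(H)$ share no common top-$k$ eigenspace. Making this geometric dichotomy uniform across (i)--(iv), rather than leaning on the ad hoc numerics that Lemma~\ref{lm-2} supplies in case~(iv), is where I expect the bookkeeping to be heaviest.
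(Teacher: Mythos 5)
Your opening reduction is where you and the paper part ways, and the difference is not cosmetic. The paper does not use Fan's inequality here at all (it even remarks later that Lemma~\ref{lm-3-2-1} is deliberately ``not relied on Fan's inequality''). Instead it uses the edge-deletion interlacing of Lemma~\ref{lm-1}~(iv) together with the trace identity $s_{n-1}(G)=s_{n-1}(G_1\cup G_2)+2t$: since $G$ is connected while $G_1\cup G_2$ is not, the tail $\sum_{i=k+1}^{n-1}\bigl(\mu_i(G_1\cup G_2)-\mu_i(G)\bigr)$ contains the term $-\mu_{n-1}(G)<0$, so one gets the \emph{strict} inequality $s_k(G)<s_k(G_1\cup G_2)+2t$ for free. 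After that, every case only needs the non-strict hypothesis bounds, and the arithmetic $t-k_1k_2\le 0$ (or $t-e(G_2)\le 0$) closes cases (i)--(iii) exactly as in your computation. Your Fan-based route yields only $s_k(G)\le s_k(G_1\cup G_2)+2t$, which is why you are forced into a separate equality analysis.

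That equality analysis is where your proposal has genuine gaps. First, your claim that ``the eigenvector for the largest eigenvalue of a connected piece \dots is supported on all of that piece'' is false for Laplacians: for $K_3$ the eigenspace of $\mu_1=3$ contains $(1,-1,0)$, and Perron--Frobenius does not apply to $D-A$. So the geometric dichotomy you propose for forcing strictness in Fan's inequality does not stand as stated, and you acknowledge the case-by-case bookkeeping is not done. Second, and independently of strictness, your treatment of case (iv) is quantitatively insufficient: bounding $\mu_1(G_2)\le n_2$ via Lemma~\ref{lm-1}~(ii) when $G_2\cong T_{n_2}^2$ gives, for the critical instance $k=t=3$ with $G_1\cong S_{n_1}$, only $s_3(G)<n_1+n_2+2.7+6=n+8.7$, which does not beat the target $e(G)+6=n+7$. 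The paper needs, and proves from the characteristic polynomial of $L(T_{n_2}^2)$ (and similarly for $T_{n_2}^3$), the sharper bound $\mu_1(T_{n_2}^2)<n_2-1.7$; the $1.7$ of slack is exactly what makes $k=3$ work. Without that input (or some substitute), case (iv) of your argument fails at the very value of $k$ for which the lemma is used in Theorem~\ref{thm-3-2-2}.
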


\begin{proof}
Since $G$ is connected and $G_1\cup G_2$ is not, we have $\mu_{n-1}(G)>0$ and $\mu_{n-1}(G_1\cup G_2)=0$.
Moreover, by Lemma \ref{lm-1} (iv) we get
$$\mu_i(G_1\cup G_2)\leq \mu_i(G),\, i=1,2,\cdots,n-2.$$
These, together with the fact that $s_{n-1}(G)=2e(G)=2(e(G_1\cup G_2)+t)=s_{n-1}(G_1\cup G_2)+2t$, would yield that for $3\leq k\leq n-2$,
\begin{eqnarray}
s_k(G)&=&s_k(G_1\cup G_2)+2t+\sum_{i=k+1}^{n-1}\big(\mu_i(G_1\cup G_2)-\mu_i(G)\big)\nonumber\\
&\leq& s_k(G_1\cup G_2)+2t-\mu_{n-1}(G)<s_k(G_1\cup G_2)+2t. \label{eq-3-2-1}
\end{eqnarray}

{\bf (i)} If $s_k(G_1\cup G_2)=s_k(G_1)$ and $e(G_i)\geq t$ for $i=1,2$,
then $k\leq n_1-1$, and hence the hypothesis gives that $s_k(G_1)\leq e(G_1)+\binom{k+1}{2}$.
Now, by (\ref{eq-3-2-1}) we can obtain
\begin{eqnarray*}
s_k(G)<s_k(G_1)+2t\leq e(G_1)+\binom{k+1}{2}+2t\leq e(G)+\binom{k+1}{2}.
\end{eqnarray*}
By the same argument, we can also obtain the desired result if $s_k(G_1\cup G_2)=s_k(G_2)$ and $e(G_i)\geq t$ for $i=1,2$.

{\bf (ii)} If $s_k(G_1\cup G_2)=s_{k_1}(G_1)+s_{k_2}(G_2)$ and $t\leq k-1$, where $k_1+k_2=k$ and $k_1k_2\neq 0$,
then for $i=1,2$, we have $1\leq k_i\leq k-1$ and $k_i\leq n_i-1$,
and hence the hypothesis gives that $s_{k_i}(G_i)\leq e(G_i)+\binom{k_i+1}{2}$.
Again by (\ref{eq-3-2-1}) we obtain
\begin{eqnarray*}
s_k(G)&<&s_{k_1}(G_1)+s_{k_2}(G_2)+2t\\
&\leq& e(G_1)+\binom{k_1+1}{2}+e(G_2)+\binom{k_2+1}{2}+2t\\
&\leq& e(G)+\frac{k_1^2+k_2^2+k_1+k_2}{2}+t\\
&\leq& e(G)+\binom{k+1}{2}-k_1k_2+(k-1)\\
&=& e(G)+\binom{k+1}{2}+(k_1-1)(k_1-(k-1))\leq e(G)+\binom{k+1}{2}.
\end{eqnarray*}

{\bf (iii)} If $t\leq k-1$, $e(G_1)\geq t$ and $e(G_2)\geq t$, then combining the arguments of (i) and (ii),
we can obtain the desired result.

{\bf (iv)} Suppose that $t=k$ and $G_1\cong S_{n_1}$ ($n_1\geq 3$).
It is well known that
$$\mu(G_1)=\{n_1,1,\cdots,1,0\}.$$

If $G_2\cong S_{n_2}$ ($n_2\geq 2$), then
$$\mu(G_1\cup G_2)=\{n_1,n_2,1,\cdots,1,0,0\}.$$
Thus, since
$e(G)=e(G_1\cup G_2)+t=n_1+n_2-2+t$,
it follows from (\ref{eq-3-2-1}) that for $3\leq k\leq n-2$,
\begin{eqnarray*}
s_k(G)&<&s_k(G_1\cup G_2)+2t\\
&=&n_1+n_2+(k-2)+2t\\
&=&e(G)+k+t\leq e(G)+\binom{k+1}{2}.
\end{eqnarray*}

If $G_2\cong T_{n_2}^2$ ($n_2\geq 6$), by Lemma \ref{lm-2} we have $1<\mu_2(T_{n_2}^2)<2.7$.
Moreover, as shown in the proof of Lemma 4.6 in \cite{Du}, the characteristic polynomial of $L(T_{n_2}^2)$ is
$$\phi(T_{n_2}^2;x):=\det(xI-L(T_{n_2}^2))=x(x-1)^{n_2-6}g_1(x),$$
where
$$g_1(x):=x^5-(n_2+4)x^4+(6n_2-1)x^3-(11n_2-14)x^2+(6n_2-5)x-n_2.$$
For $n_2\geq 7$, one can directly check by computer that
$g_1(n_2-1.7)\approx 0.3(n_2-6.01366)(n_2-4.31803)(n_2-2.08197)(n_2-0.386343)>0$,
which implies that $\mu_1(T_{n_2}^2)<n_2-1.7$.
Thus, from (\ref{eq-3-2-1}) it follows that for $3\leq k\leq n-2$,
\begin{eqnarray*}
s_k(G)&<&s_k(G_1\cup G_2)+2t\\
&<&n_1+n_2-1.7+2.7(k-2)+2t\\
&=&e(G)+2.7k-5.1+t=e(G)+\binom{k+1}{2}-\frac{(k-3.4)(k-3)}{2}\\
&\leq& e(G)+\binom{k+1}{2}\,\, (\textrm{as $k$ is an integer at least 3}).
\end{eqnarray*}
For $n_2=6$, a direct calculation shows that
$$\mu(T_{n_2}^2)=\{4.30278, 2.61803, 2.00000, 0.69722, 0.38197, 0\},$$
from which we can also deduce that $s_k(G)<n_1+n_2-1.65+2.65(k-2)+2t\leq n_1+n_2-1.7+2.7(k-2)+2t\leq e(G)+\binom{k+1}{2}$, as desired.

If $G_2\cong T_{n_2}^3$ ($n_2\geq 7$), again by Lemma \ref{lm-2} we see that $1<\mu_2(T_{n_2}^3)<2.7$.
Moreover, for $n_2=7$, a direct calculation gives that $\mu_1(T_{n_2}^3)\approx4.41421<n_2-1.7$,
while for $n_2\geq 8$, as shown in the proof of Lemma 4.6 in \cite{Du}, the characteristic polynomial of $L(T_{n_2}^3)$ is
$$\phi(T_{n_2}^3;x):=\det(xI-L(T_{n_2}^3))=x(x-1)^{n_2-8}g_2(x),$$
where
\begin{eqnarray*}
g_2(x)&:=&x^7-(n_2+6)x^6+(9n_2+3)x^5-(30n_2-42)x^4\\
&&+(45n_2-87)x^3-(30n_2-48)x^2+(9n_2-8)x-n_2.
\end{eqnarray*}
We can now check directly by computer that
$g_2(n_2-2)=(n_2^2-7n_2+11)^2(n_2^2-7n_2+8)>0$,
which implies that $\mu_1(T_{n_2}^3)<n_2-2<n_2-1.7$.
Thus, by the same argument as above for the case of $G_2\cong T_{n_2}^2$,
we get the desired result.

This completes the proof of Lemma \ref{lm-3-2-1}.
\end{proof}

For convenience, we will say a $t$-edge-cut $Q$ in a connected graph $G$ {\it nice}
if $G-Q:=G_1\cup G_2$ with $G_1$ and $G_2$ being two vertex-disjoint graphs with $e(G_1)\geq t$ and $e(G_2)\geq t$.

We are now ready to present the main result of this subsection.

\begin{theorem}\label{thm-3-2-2}
Let $G$ be a $c$-cyclic graph on $n$ vertices, where $c\in\{0,1,2\}$.
Then for each $k\in\{1,2,\cdots,n-1\}$
\begin{eqnarray}
s_k(G)\leq e(G)+\binom{k+1}{2}.\label{eq-3-2-2}
\end{eqnarray}
Moreover, the equality holds in (\ref{eq-3-2-2}) if and only if $k=1$ and $G\cong S_n$ for $c=0$, or
$k=2$ and $G\cong G(n-3,1)$ for $c=1$, or $k=2$ and $G\cong G(n-4,2)$ for $c=2$.
\end{theorem}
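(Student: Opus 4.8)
The plan is to dispose of the inequality~\eqref{eq-3-2-2} for small $k$ using the already-established cases $k\in\{1,2,n-1\}$ of the full Brouwer's conjecture (Lemma~\ref{lm-7}), and then to handle the range $3\le k\le n-2$ by finding, in any $c$-cyclic graph with $c\le 2$, a suitable edge-cut to which Lemma~\ref{lm-3-2-1} applies. First I would observe that since $G$ is connected, small orders can be checked by hand (or dismissed because the range $3\le k\le n-2$ is empty), so we may assume $n$ is reasonably large. For $k=1$ and $k=2$ the inequality and the equality characterization are exactly Lemma~\ref{lm-7}(i)--(ii); note $S_n$ is the only tree attaining $s_1=e(G)+1$, and among unicyclic (resp. bicyclic) graphs the extremal graphs $G(s,n-2-s)$ of Lemma~\ref{lm-7}(ii) that are $1$-cyclic (resp. $2$-cyclic) are precisely $G(n-3,1)$ (resp. $G(n-4,2)$), since $G(s,n-2-s)$ has $n-1+(\text{number of triangles})$ edges. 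For $k=n-1$, Lemma~\ref{lm-7}(iii) gives equality only for $K_n$, which is not $c$-cyclic for $c\le 2$ once $n\ge 4$, so the inequality is strict there. This reduces everything to showing \emph{strict} inequality for $3\le k\le n-2$.

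For that range I would argue by induction on $c$ (equivalently, on the number of edges), the base case $c=0$ (trees) being handled first. For a tree $T$ on $n$ vertices with $3\le k\le n-2$: if $T\cong S_n$ then $\mu(T)=(n,1,\dots,1,0)$ and $s_k(T)=n+k-1 < e(T)+\binom{k+1}{2}=n-1+\binom{k+1}{2}$ since $\binom{k+1}{2}\ge k$ with equality only at $k=1$. If $T\not\cong S_n$, pick a non-pendant edge $e_1$ (which exists); then $T-e_1=T_1\cup T_2$ with both $T_i$ of order $\ge 2$, and in fact we can choose $e_1$ so that both components have order $\ge 2$ and at least one — after re-labelling — has order $\ge 3$; subtrees satisfy the full conjecture for all relevant indices by induction on $n$, so Lemma~\ref{lm-3-2-1}(iii) with $t=1\le k-1$ (using $e(T_i)\ge 1\ge t$) yields $s_k(T)<e(T)+\binom{k+1}{2}$, unless the order-$2$ component forces $e(T_i)<t$, which cannot happen since $t=1$. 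For the inductive step, given a $c$-cyclic graph $G$ with $c\in\{1,2\}$ and $3\le k\le n-2$, I would locate a \textbf{nice} $t$-edge-cut $Q$ with $t\le c$ (so $t\le 2\le k-1$): removing an edge on a cycle, or the two edges of a suitable cut separating the two cycles in a bicyclic graph, leaves $G-Q=G_1\cup G_2$; one then checks the components have enough edges, $e(G_i)\ge t$, by choosing the cut so neither side is a single edge or too small — this is where attached trees help guarantee $e(G_i)\ge t$. Then Lemma~\ref{lm-3-2-1}(i) or (iii) applies directly, giving strict inequality. The leftover configurations are exactly those where no nice cut exists: these are the graphs of the form $S_{n_1}$ joined to $S_{n_2}$ or to $T_{n_2}^i$ by $t=k$ bridging edges, but since here $t\le c\le 2<3\le k$ we never fall into case (iv); the genuinely delicate small cases are the few bicyclic graphs whose base is $\theta(1,1,q)$ or $\infty(3,1,3)$ with almost all vertices pendant, which I would dispatch by explicit spectral computation as in Lemma~\ref{lm-3-2-1}(iv).

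The main obstacle I anticipate is not the generic argument but the bookkeeping at the \textbf{boundary}: verifying that in every unicyclic and bicyclic graph one can actually \emph{find} a nice edge-cut with $t\le c$ and $e(G_i)\ge t$, and pinning down the finite list of exceptional graphs where this fails (short cycles with few or no attached trees). For these one must argue separately, either by the interlacing bound~\eqref{eq-3-2-1} combined with crude eigenvalue estimates ($\mu_1(G)\le n$, $s_k\le k\mu_1$), or by direct computation of the Laplacian spectrum for the handful of small-order base graphs, much as the proof of Lemma~\ref{lm-3-2-1}(iv) handles $T_{n_2}^2$ and $T_{n_2}^3$. Once the strict inequality for $3\le k\le n-2$ is secured, combining with the $k\in\{1,2,n-1\}$ analysis above gives the full statement, with the three claimed equality cases arising solely from $k\in\{1,2\}$.
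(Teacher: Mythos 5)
Your reduction of the problem is the right one: Lemma~\ref{lm-7} settles $k\in\{1,2,n-1\}$ together with the equality cases (and your edge-count argument correctly extracts $G(n-3,1)$ and $G(n-4,2)$ from the family $G(s,n-2-s)$), so everything hinges on strict inequality for $3\le k\le n-2$. But for that range you miss the step that makes the paper's proof short: Lemma~\ref{lm-5} with $e(G)=n-1+c$ gives
$s_k(G)<e(G)+\binom{k+1}{2}-\tfrac{k(k-3)}{2}+c-1$,
which already finishes \emph{all} of $c\in\{0,1\}$ and all of $c=2$ with $k\ge 4$; the only genuinely hard case is $s_3(G)<e(G)+6$ for bicyclic graphs. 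By instead running an induction over $c$ and all $k$ via nice edge-cuts, you create for yourself infinitely many exceptional configurations that your method cannot reach: e.g.\ a triangle with $n-3$ pendants at one vertex has no nice $1$- or $2$-edge-cut at all (every candidate cut leaves a component with too few edges), and the same happens for every cycle or bicyclic base with all stars concentrated at one or two vertices. These are infinite families parametrized by the numbers of pendants, so your fallback of ``explicit spectral computation'' does not close them.

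The more serious error is your assertion that, since $t\le c\le 2<3\le k$, ``we never fall into case (iv)'' of Lemma~\ref{lm-3-2-1}. In the paper's treatment of the residual case $(c,k)=(2,3)$, a minimal-counterexample argument (every cut-edge is pendant, whence $l=1$ and $p=q=3$ for the $\infty$-base and $p\le 3$ for the $\theta$-base) reduces to seventeen infinite families $B_0,\dots,B_{16}$ of bicyclic graphs, and most of these are handled \emph{precisely} by case (iv) with $t=k=3$, via decompositions $B_i-\{e_1,e_2,e_3\}=S_{n_1}\cup S_{n_2}$, $S_{n_1}\cup T^2_{n_2}$ or $S_{n_1}\cup T^3_{n_2}$; the remaining ones use case (ii) with a $2$-edge-cut into $U_j\cup S_2$, which is \emph{not} nice (since $e(S_2)=1<2$) but satisfies the eigenvalue-splitting hypothesis $\mu_3(U_j)\le 2=\mu_1(S_2)$. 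So the $t=k$ branch and the non-nice $2$-cut branch of Lemma~\ref{lm-3-2-1} are not dispensable curiosities; they are the engine of the only hard case, and a proof that excludes them by fiat does not go through.
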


\begin{proof}
By Lemma \ref{lm-7}, it suffices to prove that for $3\leq k\leq n-2$,
\begin{eqnarray}
	s_{k}(G)<e(G)+\binom{k+1}{2}. \label{eq-3-2-3}
\end{eqnarray}
Indeed, by Lemma \ref{lm-5} and the fact that $e(G)=n-1+c$, we obtain
\begin{eqnarray}
	s_{k}(G)<2e(G)-n+2k=e(G)+\binom{k+1}{2}-\frac{k(k-3)}{2}+c-1. \label{eq-3-2-4}
\end{eqnarray}

If $c=0$ or $1$, then for $3\leq k\leq n-2$, (\ref{eq-3-2-3}) follows directly from (\ref{eq-3-2-4}).

If $c=2$, then for $4\leq k\leq n-2$, (\ref{eq-3-2-3}) follows again from (\ref{eq-3-2-4}).
Consequently, it remains to show that $s_{3}(G)<e(G)+6$.
This, in fact, has been confirmed to be true for all graphs with at most 9 vertices \cite{Li}.
We now assume to the contradiction that
\begin{eqnarray}
s_{3}(G)\geq e(G)+6,\label{eq-3-2-5}
\end{eqnarray}
such that $G$ has as few vertices as possible.
This implies that $|V(G)|=n\geq 10$
and that $s_3(H)<e(H)+6$ for any 2-cyclic graph $H$ with $|V(H)|<|V(G)|$.
Moreover, we have the following fact:

\textbf{Fact 1.} Every cut-edge $uv$ in $G$ (if exists) is a pendant edge (i.e., $d_G(u)\geq 2$ and $d_G(v)=1$).

Otherwise, we have $G-\{uv\}:=G_1\cup G_2$, where $G_1$ is a 2-cyclic graph with $n>|V(G_1)|\geq 4$ and $e(G_1)\geq 5$
and $G_2$ is a 0-cyclic graph with $|V(G_2)|\geq 2$ and $e(G_2)\geq 1$,
or $G_1$ and $G_2$ are both 1-cyclic graph with $|V(G_i)|\geq 3$ and $e(G_i)\geq 3$ ($i=1,2$);
this means that $\{uv\}$ is a nice 1-edge-cut.
Furthermore, for the former case, the minimality of $G$ and Lemma \ref{lm-7} yield that
$s_{\ell_1}(G_1)\leq e(G_1)+\binom{\ell_1+1}{2}$ for $\ell_1\in\{1,2,3\}$.
We also have $s_{\ell_2}(G_2)\leq e(G_2)+\binom{\ell_2+1}{2}$ for $\ell_2\in\{1,2,\cdots,|V(G_2)|-1\}$,
which is just proven previously (for $c=0$).
Consequently, by Lemma \ref{lm-3-2-1} (iii) we obtain $s_{3}(G)< e(G)+6$, contradicting the assumption (\ref{eq-3-2-5}).
For the latter case, we have
\begin{eqnarray}
s_{\ell_i}(G_i)\leq e(G_i)+\binom{\ell_i+1}{2}\,\, \textrm{for}\,\, \ell_i\in\{1,2,\cdots,|V(G_i)|-1\}, i=1,2, \label{eq-3-2-6}
\end{eqnarray}
which is also just shown previously (for $c=1$).
Lemma \ref{lm-3-2-1} (iii) now yields the same contradiction as the former case.
Therefore, Fact 1 follows.
\begin{figure}
  \centering
  \includegraphics[width=3.5cm]{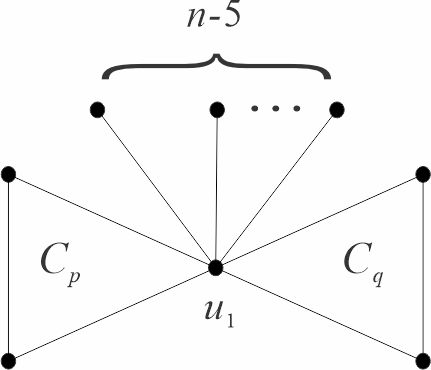}\\
  \caption{The 2-cyclic graph $B_0$.}\label{fig6}
\end{figure}

We next consider the following two cases according to the base $\widehat{G}$ of $G$.

\textbf{Case 1.} $\widehat{G}\cong \infty(p,l,q)$, where $p\geq q\geq 3$ and $l\geq 1$ (see Figure \ref{fig2}).

By Fact 1, we see that $G$ has no non-pendant cut-edges, and hence $l=1$.
We further claim that $p=q=3$.
Indeed, if $p\neq 3$ or $q\neq 3$ (without loss of generality assume that $p\geq 4$), then
let $e_1$ and $e_2$ be the edges on $C_p$ incident to the vertex $u_1$.
It is easy to check that $\{e_1,e_2\}$ is a nice 2-edge-cut and $G-\{e_1,e_2\}:=G_1\cup G_2$ with
$G_1$ and $G_2$ being 1-cyclic graph and 0-cyclic graph, respectively.
As above, (\ref{eq-3-2-6}) still holds, which, together with Lemma \ref{lm-3-2-1} (iii), yields the same contradiction.
Our claim follows.

Recall that $G$ can be obtained by attaching trees to some vertices of its base $\widehat{G}$.
Fact 1 here tells us that these trees must be stars with their centers identifying some vertices of $\widehat{G}$.
Furthermore, except the vertex $u_1$, if there is a star attached to any other vertex, without loss of generality, of $C_p$ in $\widehat{G}$,
then let $e_1$ and $e_2$, as above, be the edges on $C_p$ incident to $u_1$.
Clearly, $\{e_1,e_2\}$ is also a nice 2-edge-cut and $G-\{e_1,e_2\}:=G_1\cup G_2$ with
$G_1$ and $G_2$ being 1-cyclic graph and 0-cyclic graph, respectively.
Thus, the desired contradiction will appear after using the same argument as above.
This means that all the stars can only be attached to $u_1$ of $\widehat{G}$,
and hence $G$ is the graph $B_0$ as shown in Figure \ref{fig6}.
However, by Grone-Merris-Bai theorem, we have
$s_3(G)\leq n+5+1=e(G)+5<e(G)+6$, a contradiction too.

\textbf{Case 2.} $\widehat{G}\cong \theta(p,l,q)$, where $p\geq q\geq l\geq 1$ (see Figure \ref{fig2}).

We first claim that $p\leq 3$, which implies that all possible bases of $G$ are
$\theta(2,1,2)$, $\theta(2,2,2)$, $\theta(3,1,2)$, $\theta(3,1,3)$, $\theta(3,2,2)$, $\theta(3,2,3)$ and $\theta(3,3,3)$ (as shown in Figure \ref{fig7}).
Indeed, if $p\geq 4$, then $\{xu_1, u_{p-1}y\}$ is a nice 2-edge-cut
and $G-\{xu_1, u_{p-1}y\}:=G_1\cup G_2$ with $G_1$ and $G_2$ being 1-cyclic graph and 0-cyclic graph, respectively.
The same argument as used in Case 1 yields the desired contradiction. Our claim follows.

As shown in Case 1, $G$ can be obtained by identifying the centers of stars with some vertices of $\widehat{G}$.
We further claim that these vertices of $\widehat{G}$ can only be $x$, $y$, and the middle vertex $z$ of each $x$-$y$ path of length 2
(in this case, there is at most one $S_2$ attached to $z$).
Otherwise, there are always two edges $e_1, e_2$ in $\widehat{G}$ such that
$\{e_1,e_2\}$ is a nice 2-edge-cut and $G-\{e_1,e_2\}:=G_1\cup G_2$ with
$G_1$ and $G_2$ being 1-cyclic graph and 0-cyclic graph, respectively.
This, as above, will lead to a contradiction. Our claim follows.

Now, by combining the above arguments, we can conclude that
all possible $G$ are the graphs $B_i$ ($i=1,2,\cdots,16$) as shown in Figure \ref{fig7},
where we assume without loss of generality that $a\geq b\geq 0$.
Moreover, since $n\geq 10$, we can easily check that $a\geq 2$ for $B_i$ with $i\neq 7,13,15,16$, and $a\geq 1$ otherwise.
We will show in the following that $s_3(B_i)<e(B_i)+6$ for $i=1,2,\cdots,16$, which also contradicts with the assumption (\ref{eq-3-2-5}).
This, together with the above discussions, suggests that under the assumption (\ref{eq-3-2-5}), each case leads to a contradiction,
which means that the assumption (\ref{eq-3-2-5}) is wrong, and hence (\ref{eq-3-2-3}) follows, completing the proof.
\begin{figure}
  \centering
  \includegraphics[width=15.5cm]{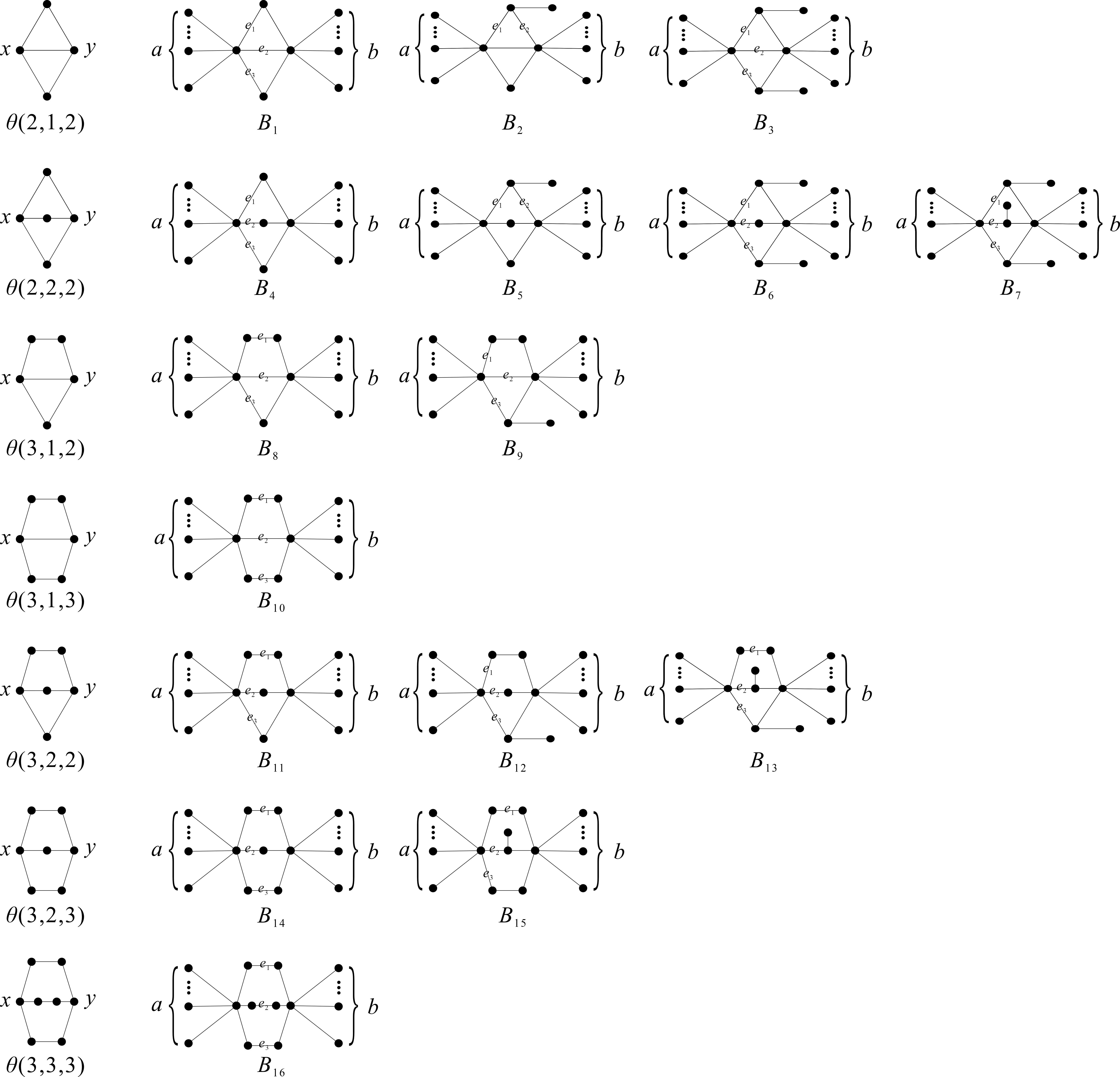}\\
  \caption{The 2-cyclic graphs $B_i$ ($i=1,2,\cdots,16$) and corresponding bases.}\label{fig7}
\end{figure}

$\bullet$ For $i=1,4,8,10,11,14,16$, let $e_1,e_2,e_3$ be the edges in $B_i$ as marked in Figure \ref{fig7}.
It is easy to see that $B_i-\{e_1,e_2,e_3\}=S_{n_1}\cup S_{n_2}$, where $n_1\geq 3$ and $n_2\geq 3$.
Thus, Lemma \ref{lm-3-2-1} (iv) yields that $s_3(B_i)<e(B_i)+6$, as desired.

$\bullet$ For $i=6,12,13,15$, as above, we have $B_i-\{e_1,e_2,e_3\}=S_{n_1}\cup T^2_{n_2}$, where $n_1\geq 3$ and $n_2\geq 6$.
Thus, Lemma \ref{lm-3-2-1} (iv) yields the desired result.

$\bullet$ For $i=7$, if $a=1$ (in this case we have $n=10)$, a direct calculation shows that $s_3(B_7)\approx 6.1926+4.3028+3.4142<e(B_7)+6$. 
If $a\geq 2$, since $B_7-\{e_1,e_2,e_3\}=S_{n_1}\cup T^3_{n_2}$ with $n_1\geq 3$ and $n_2\geq 7$,
by Lemma \ref{lm-3-2-1} (iv) we also have $s_3(B_7)<e(B_7)+6$.
\begin{figure}
  \centering
  \includegraphics[width=8.5cm]{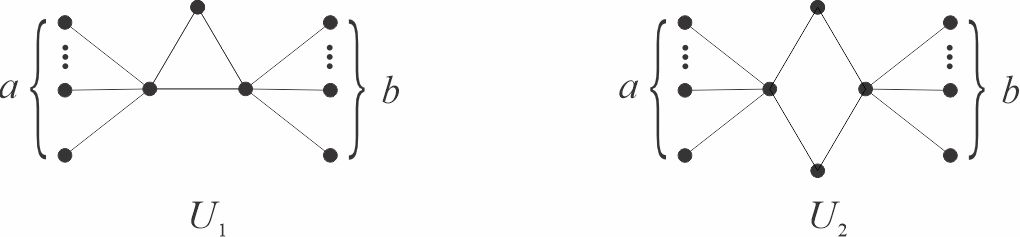}\\
  \caption{The 1-cyclic graphs $U_1$ and $U_2$ with $a\geq b\geq 0$.}\label{fig8}
\end{figure}

$\bullet$ For $i=2,5$, let $e_1, e_2$ be the edges in $B_i$ as marked in Figure \ref{fig7}.
Clearly, $B_2-\{e_1,e_2\}=U_1\cup S_2$ and $B_5-\{e_1,e_2\}=U_2\cup S_2$,
where $U_1$ and $U_2$ are the graphs as shown in Figure \ref{fig8}.
It was shown in \cite{Du} that $\mu_3(U_j)\leq 2$ for $j=1,2$ if $n\geq 8$
\footnote{From the proof of Lemma 4.2 in \cite{Du}, the condition $n\geq 9$ can be easily improved to $n\geq 8$.}
and $a\geq b\geq 0$.
This, together with the fact that $\mu_1(S_2)=2$, yields that
$s_3(U_j\cup S_2)=s_2(U_j)+s_1(S_2)$ for $j=1,2$.
Moreover, for $j=1,2$ we have $s_3(U_j)\leq e(U_j)+6$ (since $U_j$ is a 1-cyclic graph) and $s_1(S_2)=e(S_2)+1$.
Thus, by Lemma \ref{lm-3-2-1} (ii) we can obtain $s_3(B_i)<e(B_i)+6$ for $i=2,5$.

\begin{figure}
  \centering
  \includegraphics[width=8.5cm]{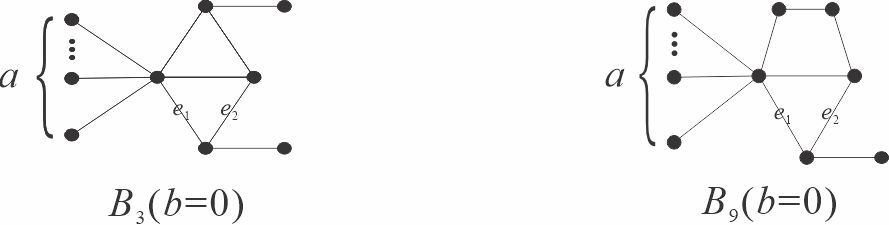}\\
  \caption{The 2-cyclic graphs $B_3$ and $B_9$ with $b=0$.}\label{fig9}
\end{figure}

$\bullet$ For $i=3,9$, if $b\geq 1$, then $B_i-\{e_1,e_2,e_3\}=S_{n_1}\cup T^2_{n_2}$ with $n_1\geq 3$ and $n_2\geq 6$;
as the case of $i=6,12,13,15$, the desired result follows.
If $b=0$, then $B_3-\{e_1,e_2\}:=U_1\cup S_2$ and $B_9-\{e_1,e_2\}:=U_2\cup S_2$,
where $e_1$ and $e_2$ are the edges in $B_i$ ($i=3,9$) as marked in Figure \ref{fig9};
as the case of $i=2,5$, we also have the desired result.
\end{proof}

\subsection{Nordhaus-Gaddum-type results for $s_k$}

In this subsection, we will consider a Nordhaus-Gaddum version of the full Brouwer's conjecture (i.e., Conjecture \ref{conj-4})
and give some partial solutions to it. For the purpose, we first present a simple but useful lemma.

\begin{lemma}\label{thm-3}
	Let $G$ be a graph of order $n$ and $\overline{G}$ be its complement. Then for $1\leq k\leq (n-1)/2$,
	\begin{eqnarray}
		s_k(G)+s_k(\overline{G})\leq \binom{n}{2}+2\cdot\binom{k+1}{2}\label{eq-3-3-1}
	\end{eqnarray}
	if and only if
	\begin{eqnarray}
		s_{n-(k+1)}(G)+s_{n-(k+1)}(\overline{G})\leq \binom{n}{2}+2\cdot\binom{n-k}{2}.\label{eq-3-3-2}
	\end{eqnarray}
    Moreover, the equality holds in (\ref{eq-3-3-1}) if and only if the equality holds in (\ref{eq-3-3-2}).
\end{lemma}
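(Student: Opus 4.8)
The plan is to use the complementary eigenvalue relation from Lemma \ref{lm-1}(i), namely $\mu_i(\overline{G}) = n - \mu_{n-i}(G)$ for $1 \le i \le n-1$, together with the identity $\sum_{i=1}^{n}\mu_i(G) = 2e(G)$, to rewrite $s_k(G) + s_k(\overline{G})$ in a form that is manifestly symmetric under $k \leftrightarrow n-k-1$. First I would record the bookkeeping identity: for any graph $H$ on $n$ vertices and any $1 \le j \le n-1$,
\begin{eqnarray*}
s_j(\overline{H}) = \sum_{i=1}^{j}\big(n - \mu_{n-i}(H)\big) = jn - \sum_{i=n-j}^{n-1}\mu_i(H) = jn - \big(2e(H) - s_{n-j-1}(H)\big),
\end{eqnarray*}
using $\mu_n(H) = 0$ and $s_{n-1}(H) = 2e(H)$. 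Here I interpret $s_0(H) = 0$. Applying this to $H = G$ and $H = \overline{G}$ (and using $e(G) + e(\overline{G}) = \binom{n}{2}$) gives
\begin{eqnarray*}
s_k(G) + s_k(\overline{G}) &=& s_k(G) + kn - 2e(G) + s_{n-k-1}(G),\\
s_{n-k-1}(G) + s_{n-k-1}(\overline{G}) &=& s_{n-k-1}(G) + (n-k-1)n - 2e(G) + s_k(G).
\end{eqnarray*}
Subtracting, the quantity $\big(s_k(G)+s_k(\overline{G})\big) - \big(s_{n-k-1}(G)+s_{n-k-1}(\overline{G})\big)$ equals $kn - (n-k-1)n = (2k+1-n)n$, a constant depending only on $n$ and $k$.

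Next I would check that the two claimed right-hand sides differ by exactly the same constant: indeed
\begin{eqnarray*}
\left(\binom{n}{2} + 2\binom{k+1}{2}\right) - \left(\binom{n}{2} + 2\binom{n-k}{2}\right) = k(k+1) - (n-k)(n-k-1) = (2k+1-n)n,
\end{eqnarray*}
after expanding and simplifying. Since the left-hand sides and the right-hand sides of \eqref{eq-3-3-1} and \eqref{eq-3-3-2} differ by the same additive constant, inequality \eqref{eq-3-3-1} holds if and only if \eqref{eq-3-3-2} holds, and equality transfers likewise. Note that $1 \le k \le (n-1)/2$ is exactly the range in which both indices $k$ and $n-k-1$ lie in $\{1,\dots,n-1\}$ (with $n-k-1 \ge k \ge 1$), so all the $s_j$ symbols above are legitimate partial sums; the boundary case $s_0 = 0$ only arises when $n = 2k+1$, and there the statement is the trivial tautology $s_k(G)+s_k(\overline G) \le \binom n2 + 2\binom{k+1}{2}$ iff itself.

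There is no real obstacle here — the lemma is purely a symmetry/accounting statement. The only point requiring a little care is making sure the index shifts are correct (that $s_k$ on one side pairs with $s_{n-k-1}$ on the other, not $s_{n-k}$), and that the arithmetic $k(k+1) - (n-k)(n-k-1) = (2k+1-n)n$ is carried out without sign errors; I would double-check this by plugging in a small value such as $n = 5$, $k = 1$. Everything else follows formally from Lemma \ref{lm-1}(i) and $\sum_i \mu_i = 2e$.
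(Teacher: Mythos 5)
Your proof is correct and takes essentially the same route as the paper's: both rest on Lemma \ref{lm-1}(i) together with $\sum_i\mu_i=2e$ to show that $\bigl(s_k(G)+s_k(\overline{G})\bigr)-\bigl(s_{n-k-1}(G)+s_{n-k-1}(\overline{G})\bigr)=(2k+1-n)n$, which matches the difference of the two right-hand sides. The paper obtains this constant by pairing the middle eigenvalues $\mu_i(G)+\mu_{n-i}(\overline{G})=n$ over the symmetric index range $k+1\le i\le n-k-1$, while you derive a conversion formula for $s_j(\overline{H})$ and subtract; these are the same computation arranged differently.
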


\begin{proof}
	For any integer $j$ with $1\leq j\leq n-1$, by Lemma \ref{lm-1} (i) we have
	$$\mu_j(G)+\mu_{n-j}(\overline{G})=\mu_j(G)+n-\mu_{j}(G)=n.$$
	Thus, for $1\leq k\leq (n-1)/2$, we obtain
	\begin{eqnarray}\label{eq-3}
		s_k(G)+s_k(\overline{G})
		&=&s_{n-(k+1)}(G)+s_{n-(k+1)}(\overline{G})-\sum_{i=k+1}^{n-(k+1)}\big[\mu_i(G)+\mu_i(\overline{G})\big]\nonumber\\
		&=&s_{n-(k+1)}(G)+s_{n-(k+1)}(\overline{G})-\sum_{i=k+1}^{n-(k+1)}\big[\mu_i(G)+\mu_{n-i}(\overline{G})\big]\nonumber\\
		&=&s_{n-(k+1)}(G)+s_{n-(k+1)}(\overline{G})-\big[n-(2k+1)\big]n.
	\end{eqnarray}
	The desired result now follows from (\ref{eq-3}) and the fact that
	$$\big[n-(2k+1)\big]n+2\cdot\binom{k+1}{2}=2\cdot\binom{n-k}{2},$$
	completing the proof.
\end{proof}

We are now ready to give the main results of this subsection.

\begin{theorem}\label{thm-4}
	Let $G$ be a graph of order $n$ and $\overline{G}$ be its complement.
    Define $\phi(n):=\sqrt{2n^2-2n+1}$. Then for $1\leq k\leq n-\frac{\phi(n)+1}{2}$ or $\frac{\phi(n)-1}{2}\leq k\leq n-2$,
    \begin{eqnarray}
    s_k(G)+s_k(\overline{G})<\binom{n}{2}+2\cdot\binom{k+1}{2}.\label{eq-3-3-3}
    \end{eqnarray}
\end{theorem}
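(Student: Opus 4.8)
The plan is to combine the trivial eigenvalue bound $\mu_i(\cdot)\le n$ with the reflection identity of Lemma~\ref{thm-3}. First I would upgrade the obvious estimate to a \emph{strict} one: for every graph $G$ on $n$ vertices and every $1\le k\le n-1$,
$$s_k(G)+s_k(\overline G)<2kn.$$
Indeed, Lemma~\ref{lm-1}(ii) gives $\mu_i(H)\le n$ for any $n$-vertex graph $H$, so $s_k(H)\le k\,\mu_1(H)\le kn$; applying this to $H=G$ and $H=\overline G$ yields $s_k(G)+s_k(\overline G)\le 2kn$. Since $G$ and $\overline G$ cannot both be disconnected (a well-known fact), at least one of them, say $\overline G$, is connected, so $\mu_{n-1}(\overline G)>0$, and then Lemma~\ref{lm-1}(i) gives $\mu_1(G)=n-\mu_{n-1}(\overline G)<n$, whence $s_k(G)\le k\,\mu_1(G)<kn$ while $s_k(\overline G)\le kn$; the strict bound follows.

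Next I would record the elementary algebra that pins down the range. The inequality $2kn\le\binom{n}{2}+2\binom{k+1}{2}$ is equivalent to $k^2-(2n-1)k+\frac{n(n-1)}{2}\ge 0$, a quadratic in $k$ whose discriminant equals $(2n-1)^2-2n(n-1)=2n^2-2n+1=\phi(n)^2$; hence it holds exactly when $k\le\frac{(2n-1)-\phi(n)}{2}=n-\frac{\phi(n)+1}{2}$ or $k\ge\frac{(2n-1)+\phi(n)}{2}$. Consequently, for $1\le k\le n-\frac{\phi(n)+1}{2}$ we obtain
$$s_k(G)+s_k(\overline G)<2kn\le\binom{n}{2}+2\binom{k+1}{2},$$
which is exactly (\ref{eq-3-3-3}); this settles the first range.

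For the second range $\frac{\phi(n)-1}{2}\le k\le n-2$, I would reflect it onto the first range via Lemma~\ref{thm-3}. Set $k'=n-k-1$. Then $k'\ge 1$ (since $k\le n-2$) and $k'=n-k-1\le n-\frac{\phi(n)+1}{2}$ (since $k\ge\frac{\phi(n)-1}{2}$); moreover $n-\frac{\phi(n)+1}{2}\le\frac{n-1}{2}$, because this inequality says $n\le\phi(n)$, i.e.\ $(n-1)^2\ge 0$. Thus $1\le k'\le(n-1)/2$ and $k'$ lies in the first range, so by the case already proved, the strict form of (\ref{eq-3-3-1}) holds at $k'$. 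Since $n-k'=k+1$, the inequality (\ref{eq-3-3-2}) at $k'$ is exactly (\ref{eq-3-3-3}) at $k$; and Lemma~\ref{thm-3}, whose two equivalences together say that (\ref{eq-3-3-1}) is strict at $k'$ if and only if (\ref{eq-3-3-2}) is strict at $k'$, transfers the strict inequality from $k'$ to $k$.

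None of these steps is technically hard; the only delicate point is the strictness $s_k(G)+s_k(\overline G)<2kn$, which is exactly what is needed in the borderline case where $n-\frac{\phi(n)+1}{2}$ is an integer (equivalently, $2n^2-2n+1$ is a perfect square, as for $n=4$), since then $2kn=\binom{n}{2}+2\binom{k+1}{2}$ and the non-strict eigenvalue bound alone would not give the claimed strict inequality.
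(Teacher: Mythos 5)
Your proof is correct, and it is essentially the mirror image of the paper's argument under the reflection $k\mapsto n-k-1$ furnished by Lemma~\ref{thm-3}. The paper handles the upper range $\frac{\phi(n)-1}{2}\le k\le n-2$ directly: it bounds $s_k(G)+s_k(\overline{G})<2e(G)+2e(\overline{G})=n(n-1)$, with strictness coming from $s_k(H)\le 2e(H)-\mu_{n-1}(H)$ for whichever of $G,\overline{G}$ is connected, and then checks that $n(n-1)\le\binom{n}{2}+2\binom{k+1}{2}$ exactly when $k^2+k\ge\binom{n}{2}$, i.e.\ $k\ge\frac{\phi(n)-1}{2}$; the lower range is then obtained from Lemma~\ref{thm-3}. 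You instead handle the lower range $1\le k\le n-\frac{\phi(n)+1}{2}$ directly with the dual trivial bound $s_k(G)+s_k(\overline{G})<2kn$, whose strictness comes from $\mu_1$ of one of the two graphs being $<n$ (via Lemma~\ref{lm-1}(i) and the fact that $G$ and $\overline{G}$ are not both disconnected), and transfer to the upper range through the same Lemma~\ref{thm-3}. The two trivial bounds $n(n-1)$ and $2kn$ agree precisely at $k=(n-1)/2$, the fixed point of the reflection, so the two proofs cover complementary halves and are equivalent in strength; neither is more general than the other. Your closing remark about the borderline case (when $\phi(n)$ is an integer, e.g.\ $n=4$, $k=1$, where $2kn=\binom{n}{2}+2\binom{k+1}{2}$) correctly identifies why the strict form of the eigenvalue bound is indispensable; the paper faces the analogous borderline on its side (there $2\binom{n}{2}$ can equal $\binom{n}{2}+2\binom{k+1}{2}$ when $\frac{\phi(n)-1}{2}$ is attained) and likewise relies on the connectivity argument for strictness.
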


\begin{proof}
	By Lemma \ref{thm-3}, it suffices to show that the inequality (\ref{eq-3-3-3}) is true for $\frac{\phi(n)-1}{2}\leq k\leq n-2$.
	Indeed, if $G$ is connected, then $\mu_{n-1}(G)>0$, and hence $s_k(G)\leq 2e(G)-\mu_{n-1}(G)<2e(G)$,
    while if $G$ is disconnected, then $\overline{G}$ must be connected, and hence $s_k(\overline{G})< 2e(\overline{G})$.
    Consequently, we have
	$$s_k(G)+s_k(\overline{G})<2e(G)+2e(\overline{G})=2\cdot\binom{n}{2}\leq \binom{n}{2}+2\cdot\binom{k+1}{2}$$
	if $k^2+k-\binom{n}{2}\geq 0$, which is always true when $\frac{\phi(n)-1}{2}\leq k\leq n-2$, completing the proof.
\end{proof}

\begin{theorem}\label{thm-5}
	Let $G$ be a graph on $n$ vertices.
	If $e(G)\leq \frac{2-\sqrt{2}}{4}\binom{n}{2}$ or $e(G)\geq \frac{2+\sqrt{2}}{4}\binom{n}{2}$,
	then the inequality (\ref{eq-3-3-3}) holds for each $k\in\{1,2,\cdots,n-2\}$.
\end{theorem}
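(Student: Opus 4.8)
The plan is to bound $s_k(G)+s_k(\overline G)$ from above by applying Zhou's inequality (Lemma \ref{lm-6}) separately to $G$ and to $\overline G$, and then to feed in the edge hypothesis. Write $m=e(G)$ and $\overline m=e(\overline G)$, so that $m+\overline m=\binom{n}{2}$ and, crucially, $n(n-1)-2e(G)=2\overline m$ while $n(n-1)-2e(\overline G)=2m$. Substituting into Lemma \ref{lm-6} makes the radicand equal to $2k(n-k-1)m\overline m$ for both graphs, so for every integer $k$ with $1\le k\le n-2$,
\begin{equation*}
s_k(G)+s_k(\overline G)\le\frac{2k(m+\overline m)+2\sqrt{2k(n-k-1)\,m\overline m}}{n-1}=\frac{2k\binom{n}{2}+2\sqrt{2k(n-k-1)\,m\overline m}}{n-1}.
\end{equation*}

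Next I would reinterpret the hypothesis. Putting $\alpha=m/\binom{n}{2}\in[0,1]$, the two conditions $e(G)\le\frac{2-\sqrt2}{4}\binom{n}{2}$ and $e(G)\ge\frac{2+\sqrt2}{4}\binom{n}{2}$ are exactly the two intervals on which $\alpha(1-\alpha)\le\frac18$, since $\frac{2\mp\sqrt2}{4}$ are the roots of $\alpha^2-\alpha+\frac18=0$ and $\frac{2-\sqrt2}{4}\cdot\frac{2+\sqrt2}{4}=\frac18$. Hence, under the hypothesis, $m\overline m\le\frac18{\binom{n}{2}}^{2}$, so $\sqrt{m\overline m}\le\frac{1}{2\sqrt2}\binom{n}{2}$; inserting this and using $\binom{n}{2}/(n-1)=n/2$ collapses the previous bound to
\begin{equation*}
s_k(G)+s_k(\overline G)\le\frac{\binom{n}{2}\bigl(2k+\sqrt{k(n-k-1)}\bigr)}{n-1}=nk+\frac{n}{2}\sqrt{k(n-k-1)}.
\end{equation*}

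What is left is a purely numerical inequality, valid for all $k$ at once (unlike Theorem \ref{thm-4}, no restriction on $k$ is needed). By the AM--GM inequality, $\sqrt{k(n-k-1)}\le\frac{k+(n-k-1)}{2}=\frac{n-1}{2}$, so the right-hand side above is at most $nk+\frac{n(n-1)}{4}$, and it suffices to check that
\begin{equation*}
nk+\frac{n(n-1)}{4}<\frac{n(n-1)}{2}+k(k+1)=\binom{n}{2}+2\binom{k+1}{2}
\end{equation*}
for all integers $k$ with $1\le k\le n-2$. Clearing denominators, this is equivalent to $(n-2k)^2+4k-n>0$, and writing $u=n-2k\in\mathbb{Z}$ it becomes $u(u-1)+2k>0$, which holds since $u(u-1)\ge0$ for every integer $u$ and $2k\ge2$. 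Chaining the three displays then yields the strict inequality $(\ref{eq-3-3-3})$ for every $k\in\{1,2,\cdots,n-2\}$.

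The step I expect to need the most care is confirming that this deliberately crude chain — adding two instances of Lemma \ref{lm-6}, replacing $m\overline m$ by its largest value $\frac18{\binom{n}{2}}^{2}$ permitted by the hypothesis, and then applying AM--GM — does not overshoot. The thresholds $\frac{2\mp\sqrt2}{4}$ are precisely calibrated so that the coefficient in front of the radical term stays at the critical value $\frac{n}{2}$, and the final inequality survives only because $\binom{n}{2}<\frac{n^2}{2}$ together with the fact that $n-2k$ is an integer (so $u(u-1)\ge0$ can be used even when AM--GM is tight, i.e.\ when $n=2k+1$). I would also separately note the degenerate endpoints $e(G)\in\{0,\binom{n}{2}\}$, where $m\overline m=0$ and the same argument applies.
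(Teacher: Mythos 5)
Your proof is correct. Up to the intermediate bound $s_k(G)+s_k(\overline G)\le kn+\frac{n}{2}\sqrt{k(n-k-1)}$ you follow the same route as the paper: both arguments rest on Lemma \ref{lm-6} applied to $G$ and $\overline G$, and your reformulation of the hypothesis as $m\overline m\le\frac18\binom{n}{2}^2$ is exactly the paper's calibration $m^2+\overline m^2\ge\frac34\binom{n}{2}^2$ in disguise (the two are equivalent via $m+\overline m=\binom{n}{2}$); your observation that the two radicands coincide and both equal $2k(n-k-1)m\overline m$ simply makes explicit that the paper's step combining the two square roots is an identity here. Where you genuinely diverge is the endgame. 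The paper first invokes the duality Lemma \ref{thm-3} to reduce to $1\le k\le (n-1)/2$ and then verifies $kn+\frac{n}{2}\sqrt{k(n-k-1)}<\binom{n}{2}+2\binom{k+1}{2}$ by squaring into a quartic $h_1$ and establishing its monotonicity on that range. You instead apply AM--GM, $\sqrt{k(n-k-1)}\le (n-1)/2$, and reduce the target to $(n-2k)(n-2k-1)+2k>0$, which holds for every integer $k$ with $1\le k\le n-2$ since $u(u-1)\ge 0$ for all integers $u$ and $2k\ge 2$. This disposes of the full range of $k$ in one stroke, needs neither Lemma \ref{thm-3} nor any calculus on $h_1$, and, as you note, survives the case $n=2k+1$ where AM--GM is tight precisely because of the integrality of $n-2k$. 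The only price is that the bound $kn+\frac{n(n-1)}{4}$ is cruder than the paper's, but it still clears the target strictly, so nothing is lost; your endgame is the more elementary of the two.
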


\begin{proof}
	For convenience, let $m:=e(G)$ and $\overline{m}:=e(\overline{G})$.
	Clearly, $m+\overline{m}=\binom{n}{2}$.
	Furthermore, since $m\leq \frac{2-\sqrt{2}}{4}\binom{n}{2}$ or $m\geq \frac{2+\sqrt{2}}{4}\binom{n}{2}$,
	we have
	\begin{eqnarray}
		m^2+\overline{m}^2\geq \frac{3}{4}\binom{n}{2}^2,\label{eq-4}
	\end{eqnarray}
	which follows from the fact that the function $f(x):=x^2+(1-x)^2\geq \frac{3}{4}$
	when $0\leq x\leq \frac{2-\sqrt{2}}{4}$ or $\frac{2+\sqrt{2}}{4}\leq x\leq 1$.
	Now, by Lemma \ref{thm-3}, we just need to show that the inequality (\ref{eq-3-3-3}) is true for $1\leq k\leq (n-1)/2$.
	Indeed, by Lemma \ref{lm-6} and the arithmetic-geometric mean inequality, we obtain
	\begin{eqnarray*}
		&&s_k(G)+s_k(\overline{G})\nonumber\\
		&\leq&\frac{2k(m+\overline{m})+\sqrt{k(n-k-1)}\big[\sqrt{n(n-1)m-2m^2}+\sqrt{n(n-1)\overline{m}-2\overline{m}^2}\big]}{n-1}\nonumber\\
		&\leq& kn+\frac{\sqrt{k(n-k-1)}\sqrt{2n(n-1)(m+\overline{m})-4(m^2+\overline{m}^2)}}{n-1}\nonumber\\
		&\leq& kn+\frac{\sqrt{k(n-k-1)}}{2}n. \quad (\textrm{by (\ref{eq-4})})
	\end{eqnarray*}
	Thus, we can conclude that
	$$s_k(G)+s_k(\overline{G})<\binom{n}{2}+2\cdot\binom{k+1}{2}$$
	if $kn+\frac{\sqrt{k(n-k-1)}}{2}n<\binom{n}{2}+2\cdot\binom{k+1}{2}$, which is equivalent to $h_1(k)>0$,	where
	\begin{eqnarray*}
		h_1(x):&=&4x^4-8(n-1)x^3+(3n-2)^2x^2-(5n-4)n(n-1)x+n^2(n-1)^2\\
		&=&4x^2\big[x-(n-1)\big]^2+(5n-4)n\big[x-(n-1)/2\big]^2-n(n-4)(n-1)^2/4.
	\end{eqnarray*}
	It is easy to see that $h_1(x)$ is a decreasing function when $1\leq x\leq (n-1)/2$.
	Consequently, for $1\leq k\leq (n-1)/2$, we have
	$$h_1(k)\geq h_1((n-1)/2)=(2n+1)(n-1)^2/4>0,$$
	as desired.
	This completes the proof of Theorem \ref{thm-5}.
\end{proof}

\begin{theorem}\label{thm-6}
	Let $G$ be a graph on $n$ vertices with maximum degree $\Delta$ and minimum degree $\delta$.
	For a given real number $t>2$, if $n\geq \frac{11t^2+8t+4}{(t-2)^2}$ and $\Delta-\delta\leq n/t$,
	then the inequality (\ref{eq-3-3-3}) holds for each $k\in\{1,2,\cdots,n-2\}$.
\end{theorem}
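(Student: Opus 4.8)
The plan is to follow the template of the proofs of Theorems \ref{thm-4} and \ref{thm-5}. By Lemma \ref{thm-3} it suffices to establish the inequality (\ref{eq-3-3-3}) for $1\le k\le (n-1)/2$, since that range is transferred to the complementary range $(n-1)/2\le k\le n-2$ automatically and strict inequality is preserved under the lemma. So fix $k$ with $1\le k\le (n-1)/2$; the idea is to bring the near-regularity hypothesis $\Delta-\delta\le n/t$ into play through the splitting $L(G)=D(G)+\bigl(-A(G)\bigr)$ together with Fan's inequality, rather than via the weaker estimate $s_k(G)<2e(G)$ used before (that estimate is what limits Theorem \ref{thm-4} to $k$ away from $n/2$).

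Applying Lemma \ref{lm-3} to $L(G)=D(G)+(-A(G))$, and noting that the $k$ largest eigenvalues of $D(G)$ are the $k$ largest degrees of $G$ (each at most $\Delta$) while the $k$ largest eigenvalues of $-A(G)$ are $-\lambda_{n-i+1}(A(G))\le\bigl|\lambda_{n-i+1}(A(G))\bigr|$ for $i=1,\dots,k$, one obtains
\[
s_k(G)\le k\Delta+\sum_{i=1}^{k}\bigl|\lambda_{n-i+1}(A(G))\bigr|,
\]
and in the same way $s_k(\overline G)\le k\Delta(\overline G)+\sum_{i=1}^{k}\bigl|\lambda_{n-i+1}(A(\overline G))\bigr|$. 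Since $\Delta(\overline G)=n-1-\delta$, we have $k\Delta+k\Delta(\overline G)=k(n-1)+k(\Delta-\delta)\le k(n-1)+kn/t$, so adding the two inequalities and invoking Lemma \ref{lm-8} (which applies because $1\le k\le(n-1)/2$) gives
\[
s_k(G)+s_k(\overline G)\le k(n-1)+\frac{kn}{t}+\sqrt{2k}\Bigl(\frac n2+k\Bigr).
\]

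It then remains to show that the right-hand side is strictly less than $\binom n2+2\binom{k+1}{2}=\frac{n(n-1)}{2}+k^2+k$ for every $k$ with $1\le k\le (n-1)/2$, given $n\ge\frac{11t^2+8t+4}{(t-2)^2}$. Let $F(k)$ denote the difference of the two sides. A direct computation gives
\[
F'(k)=2k-n+2-\frac nt-\frac{n}{2\sqrt{2k}}-\frac32\sqrt{2k},
\]
and since $2k-n+2\le 1$ on the interval while $\frac{n}{2\sqrt{2k}}+\frac32\sqrt{2k}\ge\sqrt{3n}>1$ by the arithmetic-geometric mean inequality, $F$ is strictly decreasing on $(0,(n-1)/2]$. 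Hence it is enough to verify $F\bigl((n-1)/2\bigr)>0$, and (treating $(n-1)/2$ as a real number, which also covers the even-$n$ case) this simplifies to
\[
\sqrt{n-1}\,\bigl((t-2)n+3t\bigr)>2t(2n-1).
\]
Bounding the right-hand side suitably (for instance replacing $2n-1$ by $2n$ while retaining the $3t$ on the left) shows that the hypothesis $n\ge\frac{11t^2+8t+4}{(t-2)^2}$ is sufficient; note that this hypothesis already forces $n$ to be comfortably larger than $t$ and than $11$.

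The main obstacle is precisely this last estimate: turning the neat inequality $s_k(G)+s_k(\overline G)<\binom n2+2\binom{k+1}{2}$ into the explicit threshold $\frac{11t^2+8t+4}{(t-2)^2}$. Near $k=n/2$ Nikiforov's term $\sqrt{2k}(n/2+k)$ is of order $n^{3/2}$ and must be beaten by a surplus of order $\bigl(\tfrac14-\tfrac1{2t}\bigr)n^2+O(n)$ coming from the comparison with $\binom n2+2\binom{k+1}{2}$; for $t>2$ this surplus is a positive multiple of $n^2$ (and it degenerates exactly as $t\to 2$, which is why the hypothesis $t>2$ is needed), so the inequality holds for all large $n$ and the only genuine work is to carry out the one-variable calculation cleanly enough to state the threshold in the form above. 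The remaining points — that the reduction in Lemma \ref{thm-3} preserves strict inequality and that the case $\Delta-\delta=0$ is simply the $k(\Delta-\delta)=0$ instance of the argument — are routine.
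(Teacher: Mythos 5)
Your proposal is correct and follows essentially the same route as the paper: reduction to $1\le k\le (n-1)/2$ via Lemma \ref{thm-3}, Fan's inequality applied to $L(G)=D(G)-A(G)$ combined with Nikiforov's bound to get $s_k(G)+s_k(\overline G)\le k(n-1+n/t)+\sqrt{2k}(n/2+k)$, and then monotonicity in $k$ plus evaluation at $k=(n-1)/2$, where your final inequality $\sqrt{n-1}\,((t-2)n+3t)>2t(2n-1)$ squares to exactly the cubic $(t-2)^2n^3-(11t^2+8t+4)n^2+(19t^2+12t)n-13t^2$ whose positivity the paper derives from the hypothesis on $n$. The only (harmless) difference is that you establish monotonicity of the unsquared function $F$ by an AM--GM estimate on $F'$, whereas the paper squares first and shows the resulting quartic $h_2$ is decreasing via a third-derivative analysis.
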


\begin{proof}
	Let $d_i$ and $\overline{d}_i$ denote the $i$-th largest degrees of $G$ and $\overline{G}$, respectively.
	Set $\lambda_i:=\lambda_i(A(G))$ and $\overline{\lambda}_i:=\lambda_i(A(\overline{G}))$ for convenience.
	By Lemma \ref{thm-3}, we just need to prove the inequality (\ref{eq-3-3-3}) to be true for $1\leq k\leq (n-1)/2$.
	Indeed, by applying Lemma \ref{lm-3} to $L(G)=D(G)-A(G)$ and $L(\overline{G})=D(\overline{G})-A(\overline{G})$, respectively, we have,
	\begin{eqnarray*}
		s_k(G)&\leq& \sum_{i=1}^k d_i+\sum_{i=n}^{n-k+1}(-\lambda_i)\leq k\Delta+\sum_{i=1}^{k}|\lambda_{n-i+1}|;\\
		s_k(\overline{G})&\leq& \sum_{i=1}^k \overline{d}_i+\sum_{i=n}^{n-k+1}(-\overline{\lambda}_i)
		\leq k(n-1-\delta)+\sum_{i=1}^{k}|\overline{\lambda}_{n-i+1}|.
	\end{eqnarray*}
	Thus, by Lemma \ref{lm-8}, we obtain
	\begin{eqnarray*}
		s_k(G)+s_k(\overline{G})&\leq& k(n-1+\Delta-\delta)+\sqrt{2k}(n/2+k)\\
		&\leq& k(n-1+n/t)+\sqrt{2k}(n/2+k).
	\end{eqnarray*}
	Now, we can conclude that
	$$s_k(G)+s_k(\overline{G})<\binom{n}{2}+2\cdot\binom{k+1}{2}$$
    if $k(n-1+\frac{n}{t})+\sqrt{2k}(\frac{n}{2}+k)<\binom{n}{2}+2\cdot\binom{k+1}{2}$,
    which is equivalent to $\frac{h_2(k)}{t^2}>0$, where
	\begin{eqnarray*}
		h_2(x):&=&4t^2x^4-8t(nt+n-t)x^3+4\big[(2t^2+2t+1)n^2-(7t^2+4t)n+4t^2\big]x^2\\
		&&-2nt\big[2(t+1)n^2-(5t+2)n+4t\big]x+n^2(n-1)^2t^2.
	\end{eqnarray*}
	We further claim that $h_2(x)$ is a decreasing function when $1\leq x\leq (n-1)/2$,
	from which we can derive that when $n\geq \frac{11t^2+8t+4}{(t-2)^2}$ and $1\leq k\leq (n-1)/2$,
	\begin{eqnarray*}
		h_2(k)&\geq& h_2((n-1)/2)\\
		&=&\frac{1}{4}(n-1)\big[(t-2)^2n^3-(11t^2+8t+4)n^2+(19t^2+12t)n-13t^2\big]>0,
	\end{eqnarray*}
	and thus complete the proof of Theorem \ref{thm-6}.
	
	Indeed, under the conditions that $n\geq \frac{11t^2+8t+4}{(t-2)^2}$ and $1\leq x\leq (n-1)/2$,
	one can check that
	$$h_2^{\prime\prime\prime}(x)=48t\big[2tx-(nt+n-t)\big]<0,$$
	which implies that $h_2^{\prime\prime}(x)$ is a decreasing function with respect to $x$ and thus,
	\begin{eqnarray*}
		h_2^{\prime\prime}(x)\geq h_2^{\prime\prime}((n-1)/2)=4\big[(t^2-2t+2)n^2-(8t^2+2t)n+5t^2\big]>0,
	\end{eqnarray*}
	which again implies that $h_2^{\prime}(x)$ is an increasing function with respect to $x$ and consequently,
	\begin{eqnarray*}
		h_2^{\prime}(x)&\leq& h_2^{\prime}((n-1)/2)\\
		&=&-2\big[(t-2)n^3+(7t^2+4t+2)n^2-(12t^2+5t)n+6t^2\big]<0.
	\end{eqnarray*}
	Our claim follows.
\end{proof}

\section{Concluding remark}

We have given a concise version of the full Brouwer's conjecture and
have proven it to be true for a complete split graph as well as two families of its spanning subgraphs:
one is the split graph, which can be obtained from a complete split graph by removing some edges between the clique and the independent set; 
the other is the graph obtained from a complete split graph by removing some edges whose endpoints are both in the clique. 
Note that there is an additional condition $p\leq q$ imposed on the latter case (see Theorem \ref{thm-3-1-1}),
which seems difficult to be removed.
In fact, as shown in the following result, the case of $p>q=1$ is as hard as the whole conjecture.

\begin{lemma} \label{thm-4-1}
	With a convention that $s_{0}(G)=0$ and $\binom{1}{2}=0$,
    the full Brouwer's conjecture holds for a graph $G$ if and only if it holds for $G\vee K_1$.
\end{lemma}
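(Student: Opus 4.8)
The plan is to describe the Laplacian spectrum of $G\vee K_1$ explicitly and thereby reduce every instance of the conjecture for $G\vee K_1$ to an instance for $G$. Write $n=|V(G)|$, so that $G\vee K_1$ has $n+1$ vertices and $e(G\vee K_1)=e(G)+n$. By Lemma~\ref{lm-1}(ii) we have $\mu_1(G)\leq n$, hence $1+\mu_1(G)\leq n+1$; combining this with Lemma~\ref{lm-1}(iii) applied to $G_1=G$, $G_2=K_1$ gives the \emph{ordered} spectrum
$$\mu(G\vee K_1)=\bigl(n+1,\ 1+\mu_1(G),\ 1+\mu_2(G),\ \ldots,\ 1+\mu_{n-1}(G),\ 0\bigr).$$
Consequently, for $1\leq k\leq n$,
$$s_k(G\vee K_1)=(n+1)+\sum_{i=1}^{k-1}\bigl(1+\mu_i(G)\bigr)=n+k+s_{k-1}(G),$$
and since $\binom{k+1}{2}-k=\binom{k}{2}$, a one-line rearrangement yields
$$s_k(G\vee K_1)\leq e(G\vee K_1)+\binom{k+1}{2}\iff s_{k-1}(G)\leq e(G)+\binom{k}{2},$$
with equality on one side precisely when it holds on the other. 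For $k=1$ the right-hand inequality reads $s_0(G)=0\leq e(G)$, which holds with equality iff $e(G)=0$; for $2\leq k\leq n$ it is exactly the Brouwer inequality for $G$ at parameter $k-1\in\{1,\ldots,n-1\}$.

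Next I would record two structural facts about $G\mapsto G\vee K_1$, writing $v$ for the apex vertex. First, $\omega(G\vee K_1)=\omega(G)+1$, because every clique of $G\vee K_1$ is either a clique of $G$ or of the form $\{v\}\cup C$ with $C$ a clique of $G$. Second, $G\vee K_1$ is a threshold graph if and only if $G$ is: if $G$ is threshold, then by the constructive characterization in \cite{Mahadev} one builds $G\vee K_1$ by running the construction of $G$ and then appending a dominating vertex; conversely $G=(G\vee K_1)-v$, and the class of threshold graphs is closed under vertex deletion (which again follows from the construction in \cite{Mahadev}). Combining these, $G\vee K_1$ is threshold with clique number $k+1$ if and only if $G$ is threshold with clique number $k$. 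In particular, for $k=1$ the condition ``$G\vee K_1$ threshold with clique number $2$'' means $\omega(G)=1$, equivalently $e(G)=0$ (note $\overline{K_n}\vee K_1=S_{n+1}$ is automatically threshold), which is precisely the equality case isolated above.

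With these ingredients the argument is a matching of parameters, using the concise form of the conjecture in Conjecture~\ref{conj-3}. Suppose first that the full Brouwer's conjecture holds for $G$, so that for every $j\in\{1,\ldots,n-1\}$ one has $s_j(G)\leq e(G)+\binom{j+1}{2}$ with equality iff $G$ is threshold with clique number $j+1$. For $k=1$ the inequality for $G\vee K_1$ holds trivially, and by the previous paragraph its equality case coincides unconditionally with ``$G\vee K_1$ threshold with clique number $2$''. For $2\leq k\leq n$, setting $j=k-1$ and feeding the boxed equivalence together with the two structural facts transfers both the inequality and its equality characterization from $G$ to $G\vee K_1$. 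Hence the full Brouwer's conjecture holds for $G\vee K_1$. The converse is obtained by reading the same equivalences in the opposite direction: every instance $j\in\{1,\ldots,n-1\}$ for $G$ is the instance $k=j+1\in\{2,\ldots,n\}$ for $G\vee K_1$, so each statement (inequality and equality characterization) for $G$ at $j$ follows from the corresponding one for $G\vee K_1$ at $j+1$.

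I do not anticipate a genuine obstacle: once the spectrum of $G\vee K_1$ is written down, everything is elementary. The one place that needs care is the boundary value $k=1$, where the conventions $s_0(G)=0$ and $\binom{1}{2}=0$ must be invoked and where one has to check separately that the equality case of $s_1(G\vee K_1)\leq e(G\vee K_1)+\binom{2}{2}$ — which simplifies to $e(G)=0$ — really does coincide with ``$G\vee K_1$ threshold with clique number $2$''; this is so because $\omega(G\vee K_1)=2$ forces $G=\overline{K_n}$, whose cone $S_{n+1}$ is threshold.
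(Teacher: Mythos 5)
Your proposal is correct and follows essentially the same route as the paper: both compute $\mu(G\vee K_1)$ from Lemma \ref{lm-1}(iii) to get $s_k(G\vee K_1)=n+k+s_{k-1}(G)$, reduce the inequality at $k$ for $G\vee K_1$ to the one at $k-1$ for $G$ via $\binom{k+1}{2}-k=\binom{k}{2}$, and match equality cases through the correspondence between threshold graphs of clique number $k$ on $n$ vertices and their cones of clique number $k+1$ on $n+1$ vertices. Your treatment is in fact slightly more careful than the paper's, since you spell out why $G\vee K_1$ is threshold iff $G$ is and check the $k=1$ boundary explicitly.
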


\begin{proof}
	Let $G$ be a graph with $p$ vertices.
	Clearly, $e(G\vee K_1)=e(G)+p$ and from Lemma \ref{lm-1} (iii), it follows that
	$$\mu(G\vee K_1)=\{p+1, \mu_1(G)+1, \cdots, \mu_{p-1}(G)+1, 0\}.$$
	Thus, for $1\leq k\leq p$, we have (with a convention that $s_{0}(G)=0$)
	\begin{eqnarray}
		s_{k}(G\vee K_1)=p+k+s_{k-1}(G).\label{eq-4-1}
	\end{eqnarray}

    Suppose first that the full Brouwer's conjecture holds for $G$,
    that is, for $1\leq k\leq p$, we have (with a convention that $s_{0}(G)=0$ and $\binom{1}{2}=0$)
    \begin{eqnarray*}
    s_{k-1}(G)\leq e(G)+\binom{k}{2}, 
    \end{eqnarray*}
	with equality if and only if $G$ is a threshold graph having $p$ vertices and clique number $k$,
    or equivalently, $G\vee K_1$ is a threshold graph having $p+1$ vertices and clique number $k+1$.
    This, together with (\ref{eq-4-1}), yields that for $1\leq k\leq p$,
	\begin{eqnarray*}
		s_{k}(G\vee K_1)\leq p+k+e(G)+\binom{k}{2}=e(G\vee K_1)+\binom{k+1}{2},
	\end{eqnarray*}
	with equality if and only if $G\vee K_1$ is a threshold graph having $p+1$ vertices and clique number $k+1$.

	Conversely, suppose that the full Brouwer's conjecture holds for $G\vee K_1$,
    that is, for $1\leq k\leq p$,
    $$s_{k}(G\vee K_1)\leq e(G\vee K_1)+\binom{k+1}{2}=e(G)+p+k+\binom{k}{2},$$
    with equality if and only if $G\vee K_1$ is a threshold graph having $p+1$ vertices and clique number $k+1$,
    or equivalently, $G$ is a threshold graph having $p$ vertices and clique number $k$.
	This, as well as (\ref{eq-4-1}), yields that for $1\leq k\leq p$,
	\begin{eqnarray*}
		s_{k-1}(G)\leq e(G)+\binom{k}{2},
	\end{eqnarray*}
    with equality holding if and only if $G$ is a threshold graph having $p$ vertices and clique number $k$.
    This completes the proof of Lemma \ref{thm-4-1}.
\end{proof}

For a graph $G$ with $p$ vertices, it is easy to see that
$$\mu(G\cup K_1)=\{\mu_1(G),\mu_2(G),\cdots,\mu_{p-1}(G),0,0\},$$
which implies that
$$s_k(G\cup K_1)=s_k(G)\,\, \textrm{for}\,\, 1\leq k\leq p-1,\,\, \textrm{and}\,\, s_p(G\cup K_1)=s_{p-1}(G).$$
Also, $G$ is a threshold graph if and only if so is $G\cup K_1$, and $e(G\cup K_1)=e(G)$ and $\omega(G\cup K_1)=\omega(G)$.
Combining the above we can obtain the following.

\begin{lemma} \label{thm-4-2}
    The full Brouwer's conjecture holds for a graph $G$ if and only if it holds for $G\cup K_1$.
\end{lemma}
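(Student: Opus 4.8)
The plan is to observe that adding an isolated vertex to $G$ changes almost nothing about the relevant quantities, so that the full Brouwer's conjecture for $G$ and for $G\cup K_1$ are essentially the same statement, shifted by one index at the top end. I would first record the Laplacian spectrum: since $K_1$ contributes a single eigenvalue $0$ and the Laplacian of a disjoint union is the direct sum, we have $\mu(G\cup K_1)=\{\mu_1(G),\dots,\mu_{p-1}(G),0,0\}$, exactly as stated in the paragraph preceding the lemma. Consequently $s_k(G\cup K_1)=s_k(G)$ for $1\le k\le p-1$ and $s_p(G\cup K_1)=s_{p-1}(G)$. I would also note the three bookkeeping facts already stated: $e(G\cup K_1)=e(G)$, $\omega(G\cup K_1)=\omega(G)$, and $G$ is threshold if and only if $G\cup K_1$ is threshold (the latter is clear from the constructive characterization of threshold graphs, since the first vertex in the construction is an isolated vertex anyway, or equivalently one may append a $0$ to the defining $\{0,1\}$-sequence).

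Next I would set up the two directions together. Suppose first the conjecture holds for $G$. For $1\le k\le p-1$, the inequality $s_k(G\cup K_1)=s_k(G)\le e(G)+\binom{k+1}{2}=e(G\cup K_1)+\binom{k+1}{2}$ is immediate, and equality holds iff $G$ is threshold with clique number $k+1$, iff $G\cup K_1$ is threshold with clique number $k+1$. For $k=p$ (the new top index, since $G\cup K_1$ has $p+1$ vertices and we need $1\le k\le p$), we use $s_p(G\cup K_1)=s_{p-1}(G)\le e(G)+\binom{p}{2}$; since $\binom{p}{2}<\binom{p+1}{2}$ this gives the strict inequality $s_p(G\cup K_1)<e(G\cup K_1)+\binom{p+1}{2}$, and one checks this is consistent with the equality characterization because a threshold graph on $p+1$ vertices with clique number $p+1$ would be $K_{p+1}$, whose largest $p$ Laplacian eigenvalues sum to $p(p+1)=e(K_{p+1})+\binom{p+1}{2}$, forcing $G\cup K_1=K_{p+1}$, which is impossible as $G\cup K_1$ has an isolated vertex. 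For the converse, suppose the conjecture holds for $G\cup K_1$; then for $1\le k\le p-1$ we read off $s_k(G)=s_k(G\cup K_1)\le e(G\cup K_1)+\binom{k+1}{2}=e(G)+\binom{k+1}{2}$ with the matching equality characterization, which is exactly the conjecture for $G$ in the range $1\le k\le p-1$; and the range $k=1,\dots,p-1$ is the full index range for a $p$-vertex graph, so we are done.

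I would finish by collecting these observations into the statement. The main (minor) obstacle is purely bookkeeping: making sure the index $k=p$ for $G\cup K_1$ is handled cleanly — in particular that the strict inequality there does not create a spurious equality case — and that the equivalence ``$G$ threshold with $\omega=k+1$'' $\Longleftrightarrow$ ``$G\cup K_1$ threshold with $\omega=k+1$'' is invoked correctly. There is no analytic content and no inequality to optimize; everything reduces to the spectral identity $\mu(G\cup K_1)=\{\mu_1(G),\dots,\mu_{p-1}(G),0,0\}$ together with the invariance of $e$, $\omega$, and the threshold property under adding an isolated vertex.
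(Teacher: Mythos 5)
Your proposal is correct and follows essentially the same route as the paper, which derives the lemma from the identities $\mu(G\cup K_1)=\{\mu_1(G),\dots,\mu_{p-1}(G),0,0\}$, $e(G\cup K_1)=e(G)$, $\omega(G\cup K_1)=\omega(G)$, and the invariance of the threshold property under adding an isolated vertex. Your explicit treatment of the top index $k=p$ for $G\cup K_1$ (showing the inequality is strict there and that no spurious equality case arises) is a detail the paper leaves implicit, but it is the same argument.
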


Note that the convention that $s_{0}(G)=0$ and $\binom{1}{2}=0$ yields that
$$s_0(K_1)=e(K_1)+\binom{1}{2}.$$
Now, by Lemmas \ref{thm-4-1} and \ref{thm-4-2}, as well as the constructive characterization of threshold graphs, we can deduce the next result
that was proven previously in the proof of Theorem \ref{thm-split} using a different method.

\begin{coro} \label{thm-4-3}
	With a convention that $s_{0}(G)=0$ and $\binom{1}{2}=0$,
    the full Brouwer's conjecture holds for threshold graphs.
\end{coro}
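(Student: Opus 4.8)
The plan is to prove Corollary \ref{thm-4-3} by induction on the number of vertices of a threshold graph, using the constructive characterization of threshold graphs together with Lemmas \ref{thm-4-1} and \ref{thm-4-2} to handle the two possible construction steps. The base case is a single isolated vertex $K_1$: here the only relevant value is $k=0$, and the stated convention $s_0(K_1)=0=e(K_1)+\binom{1}{2}$ shows that the (degenerate) full conjecture holds for $K_1$, with equality, noting that $K_1$ is the unique threshold graph on one vertex with clique number $1=k+1$.

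For the inductive step, I would let $G$ be a threshold graph on $n\geq 2$ vertices. By the constructive characterization, $G$ is obtained from a threshold graph $G'$ on $n-1$ vertices either by adding an isolated vertex, so $G\cong G'\cup K_1$, or by adding a dominating vertex, so $G\cong G'\vee K_1$. By the induction hypothesis, the full Brouwer's conjecture holds for $G'$. In the first case, Lemma \ref{thm-4-2} immediately gives that it holds for $G\cong G'\cup K_1$. In the second case, Lemma \ref{thm-4-1} gives that it holds for $G\cong G'\vee K_1$. In either case the full Brouwer's conjecture holds for $G$, which closes the induction.

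One point worth spelling out is that Lemmas \ref{thm-4-1} and \ref{thm-4-2} transfer the conjecture, including the equality characterization (``equality iff $G$ is a threshold graph with the appropriate clique number''), not merely the inequality; this is exactly what their statements assert, so the induction delivers the full conclusion rather than just the bound. The only mild subtlety is bookkeeping with the conventions $s_0(G)=0$ and $\binom{1}{2}=0$, which are precisely what make the range of $k$ match up cleanly across the two lemmas and the base case; this is the ``hard part'' only in the sense that one must be careful with indices, not in any substantive way. Everything else is a direct assembly of results already in hand.
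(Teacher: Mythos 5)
Your proof is correct and follows essentially the same route as the paper: the authors likewise deduce the corollary by combining the constructive characterization of threshold graphs with Lemmas \ref{thm-4-1} and \ref{thm-4-2}, anchored by the observation that the convention gives $s_0(K_1)=e(K_1)+\binom{1}{2}$ for the base case. Your write-up merely makes the induction explicit where the paper leaves it implicit.
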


On the other hand, by refining some arguments that were used in \cite{Du} (where Fan's inequality is an indispensable tool),
we have also proven that the full Brouwer's conjecture is true for $c$-cyclic graphs with $c\in\{0,1,2\}$,
where Lemma \ref{lm-3-2-1} (not relied on Fan's inequality) plays a key role and might has some other potential applications.
In addition, we have proposed the Nordhaus-Gaddum version of the full Brouwer's conjecture (i.e., Conjecture \ref{conj-4})
and have presented some partial solutions to it, where 
Theorem \ref{thm-5} tells us that for sufficient large $n$,
Conjecture \ref{conj-4} holds for the ``sparse'' graphs such as planar graphs and $c$-cyclic graphs with small $c$ and for the ``most dense'' graphs,
while Theorem \ref{thm-6} tells us that for sufficient large $n$,
Conjecture \ref{conj-4} holds for nearly-regular graphs such as regular graphs and those graphs with maximum degree $\Delta<n/2$ or minimum degree $\delta>n/2$.

We finally would like to mention that there is a higher-dimensional generalization of Brouwer's conjecture; see \cite{Abebe} for details.
It would be natural to further consider the full version of it, based on the equivalent version of the full Brouwer's conjecture (see Conjecture \ref{conj-3}).

\section*{Acknowledgements}
This work was supported by the National Natural Science Foundation of China (No. 11861011)
and Natural Science Foundation of Guangxi Province (No. 2024GXNSFAA010516).

\section*{Declaration of Interest Statement}
We declare that we have no conflicts of interest.


\end{document}